\numberwithin{equation}{section}
\newtheorem{theorem}{Theorem}[section]
\newtheorem{example}[theorem]{Example}
\newtheorem{corollary}[theorem]{Corollary}
\newtheorem{lemma}[theorem]{Lemma}
\newtheorem{proposition}[theorem]{Proposition}
\theoremstyle{definition}
\newtheorem{definition}[theorem]{Definition}
\theoremstyle{remark}
\newtheorem{remark}[theorem]{Remark}
\title{(Co)Homology of Partial Smash Products}
\keywords{Hopf algebra, partial action, partial representation, 
smash product,  Hochschild homology, spectral sequence}
\subjclass[2020]{Primary 16T05, 16E40, 18G40, Secondary 16E30, 16S40}
\author{Mikhailo Dokuchaev}
\address[Mikhailo Dokuchaev]{Departamento de Matemática, Universidade de São Paulo,
Rua do Matão, 1010, 05508-090 São Paulo, Brazil}
\email{dokucha@ime.usp.br}
\author{Emmanuel Jerez}
\address[Emmanuel Jerez]{Departamento de Matemática, Universidade de São Paulo,
Rua do Matão, 1010, 05508-090 São Paulo, Brazil}
\email{ejerez@ime.usp.br}
\begin{document}

\begin{abstract} 
Given a cocommutative  Hopf algebra $\mathcal{H}$ over a commutative ring $K$  and a symmetric partial action of
$\mathcal{H}$ on a $K$-algebra $A,$ we obtain a first quadrant Grothendieck spectral sequence converging to the Hochschild homology of the smash product $A \# \mathcal{H},$ involving the Hochschild homology of $A$ and the partial homology of 
$\mathcal{H}.$ An analogous third quadrant cohomological spectral sequence is also obtained. The definition of the partial (co)homology of  $\mathcal{H}$ under consideration is based on the category of the partial representations of  $\mathcal{H}.$ A specific partial representation of  $\mathcal{H}$ on a subalgebra   $\mathcal{B}$ of the partial ``Hopf" algebra  $\mathcal{H}_{par} $ is involved in the definition and  we  construct a projective resolution of $\mathcal{B}.$ 
\end{abstract}

 \maketitle

 \renewcommand{\contentsname}{Index}
 
 \section{Introduction}
 
 The notion of a partial group action on a $C^*$-algebra, gradually introduced in \cite{E-1}, \cite{Mc} and \cite{E0}, 
and its successful use (see, in particular,  \cite{E6})
motivated a series of algebraic developments (see the surveys \cite{Ba} and \cite{D3}).  
In particular, a Galois theory based on partial group actions was initiated in \cite{DFP},
which inspired its treatment from the point of view of Galois corings in \cite{CaenDGr} and the definition of a partial (co)action of a Hopf algebra on an algebra in \cite{CaenJan}. In the latter paper related concepts were also considered, duality results were obtained and a partial Hopf-Galois theory was introduced.
This was a starting point for rich and interesting Hopf theoretic developments around partial actions 
\cite{AB2}, \cite{AAB}, \cite{ABDP}, \cite{AB3},
\cite{CasPaqQuaSant}, \cite{ABDP2}, \cite{BatiVerc1}, \cite{AzBaFoFoMa}, \cite{ABV2},  \cite{EBAMMT}, \cite{HuVerc},
\cite{ABCQV},  \cite{AzBaFoFoMa}, \cite{AzMaPaSi}, 
\cite{CasFrePaqQuaTam},
\cite{MartiniPaquesSilva}, \cite{SarVer1}, \cite{SarVer2}, \cite{SarVer3}, \cite{BaHaVe}.

In particular, partial representations of a Hopf algebra $\mathcal{H}$ were introduced in \cite{AB2}, extending the notion of a partial group representation (see \cite{E1} and \cite{DEP}). Moreover,  an algebra   $\mathcal{H}_{par}$ was associated with $\mathcal{H}$ in  \cite{AB2}, called the \textit{partial ``Hopf''} algebra, which has 
 the universal property that each partial representation of 
 $\mathcal{H}$ can be factorized by an algebra morphism from 
  $\mathcal{H}_{par},$ being thus  the Hopf analogue of the partial group algebra (see \cite{DEP}). It was also shown in  \cite{AB2} that there is a partial action of $\mathcal{H}$ on a subalgebra $\mathcal{B}$  of   $\mathcal{H}_{par},$  such that  $\mathcal{H}_{par}$ is isomorphic to the smash product  
  $\mathcal{B} \# \mathcal{H},$ generalizing an earlier result from \cite{DE3},
 established in the case of groups. In addition, if $\mathcal{H}$ possesses an invertible antipode, then  $\mathcal{H}_{par}$ has the structure of a  Hopf algebroid  \cite{AB2}.
  Dual concepts were defined and investigated in \cite{ABCQV}.

In the present  article we study the Hochschild  homology and cohomology of  the partial smash product $A \# \mathcal{H}$ by means of spectral sequences, where $\mathcal{H}$ is a cocommutative Hopf algebra, whose partial action on the algebra $A$ is symmetric (see Section~\ref{sec:HopfParRepParAc} for definitions). Earlier,  in \cite{AlAlRePartialCohomology},  group cohomology based on partial representations was introduced and a Grothendieck
spectral sequence was produced, relating the Hochschild  cohomology of the skew group ring
 $A \rtimes G$ by a unital partial action of $G$ on an algebra $A$ with the Hochschild  cohomology of $A$ and the partial group cohomology of $G.$ This was extended in \cite{MDEJ} to the crossed product $A \ast G$ by  a unital twisted partial action of $G$ on $A,$ whose twist takes values in the base field, establishing also a similar result for the Hochschild  homology. The treatment in 
  \cite{MDEJ} was based on the theory of partial projective group representations and the related  novel concept of a twisted  partial group algebra. 
  
  We begin by giving some preliminaries around partial actions and partial representations of Hopf algebras and Hochschild  (co)homology in Section~\ref{sec:prelim}. Section~\ref{sec:homology} is dedicated to the  Hochschild  homology of  the smash product 
  $A \# \mathcal{H},$ where   $\mathcal{H}$ is a cocommutative Hopf algebra over a commutative ring $K$, whose   partial action on a unital algebra $A$ is symmetric. For the main result we also assume that $\mathcal{H}$ is projective over $K.$ The idea is to use Grothendieck's Theorem \cite[Theorem 10.48]{RotmanAnInToHoAl} to obtain a first quadrant spectral sequence $E^r$ converging to the Hochschild  homology of $A \#\mathcal{H}$ with values in a $A \#\mathcal{H}$-bimodule $M,$ and such that 
 $$E^2_{p,q} = \operatorname{Tor}_p^{\mathcal{H}_{par}}(\mathcal{B},H_q(A,M) ),$$ where $\mathcal{B}$ is the above mentioned subalgebra of $\mathcal{H}_{par}.$ 
 
 In order to prepare the ingredients for the use of Grothendieck's Theorem, we work with  the right exact functors of the form 
 $$ F(-):=A\# \mathcal{H} \otimes_{(A\# \mathcal{H})^e}-, \;\;\; F_1(-):= A \otimes_{A^e}- \;\;\;\text{ and }  \;\;\; F_2(-):= \mathcal{B} \otimes_{\mathcal{H}_{par}}-,$$ where
 $A^e$ stands for the enveloping algebra $A \otimes A^{op}$ of $A,$ with 
 $A^{op}$ denoting the opposite algebra of $A$, and the meaning of 
 $(A\# \mathcal{H})^e$ is similar. Observe that the left derived functors of $F_1$ and $F$ compute the Hochschild homology of $A$ and $A\# \mathcal{H},$ respectively. One of the main steps is to show that the functors $F_2F_1$ and $F$ are naturally isomorphic,
 when applied to $A\# \mathcal{H}$-bimodules.  It is obtained in Corollary~\ref{c_F2F1F} as a consequence of a more general fact, Proposition~\ref{p_inXB}, which states that 
     the bifunctors 
      $$   -\otimes_{\mathcal{H}_{par}} (A \otimes_{A^e}-)
     \;\;\;\; \text{ and } \;\;\;\;
     (- \otimes_{\mathcal{B}} A \# \mathcal{H}) \otimes_{(A \# \mathcal{H})^e}-,$$
 defined on $\operatorname{Mod-}\!\mathcal{H}_{par} \times (A \# \mathcal{H})^e\!\operatorname{-Mod} ,$    are naturally isomorphic. In fact, Proposition~\ref{p_inXB} is a crucial technical  tool, which is also used to make the second main step towards the use of Grothendieck's Theorem, namely Propsition~\ref{p_F1sendPtoF2acylic}, which says that 
     $F_1$ sends projective $A \# \mathcal{H}$-bimodules to left $F_2$-acyclic mo\-du\-les.
     
      A considerable  point is to show in Lemma~\ref{l_AskewP} that if $\mathcal{H}$ is projective over $K,$ then $(A \# \mathcal{H})^e$ is  projective over $A^e.$ Consequently, any projective  $(A \# \mathcal{H})$-bimodule is a projective $A$-bimodule, so that applying the left derived functor of $F_1$  to $M$ via taking a projective resolution of $M$ in the category of 
       $A \# \mathcal{H}$-bimodules computes the usual Hochschild homology of $A$ with values in the $A$-bimodule $M$ 
      (see   Remark~\ref{r_projectiveAskewGmodulesAreAprojectives}).
      
       These facts are used to obtain 
    the main result of the section, 
     Theorem~\ref{t_SSHH}, which states the existence of the above mentioned spectral sequence. As an application, if the algebra $A$ under the symmetric partial action of the cocommutative Hopf algebra $\mathcal{H}$ is separable, then we obtain an isomorphism 
    $$H_{n}(A \#\mathcal{H}, M) \cong \operatorname{Tor}_n^{\mathcal{H}_{par}}(\mathcal{B}, M/[A,M]),$$ 
    (see Example~\ref{ex:separable}). Furthermore, 
 if  $\mathcal{H}$ is the group algebra $KG$ of a group $G,$ then the spectral sequence of Theorem~\ref{t_SSHC} takes the form
    \[
         E^2_{p,q} = H_p^{par}(G,H_q(A,M) ) \Rightarrow H_{p+q}(A \rtimes G, M), 
     \]
     where $H^{par}_\bullet(G,-):= \operatorname{Tor}_\bullet^{K_{par}G}(B, -),$ the partial homology  of $G$ introduced in \cite{MMAMDDHKPartialHomology}
(see Example~\ref{ex:group}).

A dual work is done in  Section~\ref{sec:cohomology} to deal with cohomology. The main functors under consideration are  of the form
$$G_1:= \operatorname{Hom}_{A^e}(A,-), \;\;\;
    G_2:= \operatorname{Hom}_{\mathcal{H}_{par}}(\mathcal{B},-)
    \;\;\; \text{and} \;\;\; 
    G:= \operatorname{Hom}_{(A \# \mathcal{H})^e}(A \# \mathcal{H},-).$$ Note that $G_1$ is used to compute the Hochschild cohomology of  $A$, whereas for the co\-ho\-mo\-lo\-gy of  $A \# \mathcal{H}$ the functor $G$ is employed. These functors are used to apply a variation of the  Grothendieck spectral sequence \cite[Theorem 10.47]{RotmanAnInToHoAl}. The crucial steps are  Corollary~\ref{c_G2G1congG}, stating that the functor $G_2G_1$ and $G$ are naturally isomorphic, and Proposition~\ref{prop:G1sendItoD2acylic}, which says that  $G_1$ sends injective $A \# \mathcal{H}$-bimodules to right $G_2$-acyclic modules. Both of them are obtained applying an important technical tool, Proposition~\ref{p_bifunctorscohomologyisomorphism}, which is a result dual to the above mentioned  Proposition~\ref{p_inXB} on bifunctor isomorphism. The main fact is Theorem~\ref{t_SSHC}, which asserts the existence of a third quadrant cohomological spectral sequence $E_r$ such that
    \[
        E_2^{p,q} = \operatorname{Ext}^p_{\mathcal{H}_{par}}(\mathcal{B},H^q(A,M) ) \Rightarrow H^{p+q}(A \#\mathcal{H}, M), 
    \]
    where $\mathcal{H}$ is a cocommutative Hopf $K$-algebra, projective as a $K$-module, whose partial action on $A$ is  symmetric, and   $M$ is an arbitrary  $A \# \mathcal{H}$-bimodule.
    Dually to the homological case, in  the proof of  Theorem~\ref{t_SSHC} we use that any injective resolution of the 
    $A \# \mathcal{H}$-bimodule $M$ is also an injective resolution of $M$ as an $A$-bimodule (see Remark~\ref{r_Hochschild_cohomology}).
    
    In Section~\ref{sec:Hopf(co)homology}, for a 
   cocommutative Hopf algebra $\mathcal{H}$ and a  left $\mathcal{H}_{par}$-modue $M$, we define the partial Hopf homology and cohomology of $\mathcal{H}$ with coefficients in $M$ by
  $$     H^{par}_\bullet(\mathcal{H}, M) := \operatorname{Tor}_\bullet^{\mathcal{H}_{par}}(\mathcal{B}, M) \;\;\;\;\text{and}\\\\\;\;\;\;
        H_{par}^\bullet(\mathcal{H}, M) := \operatorname{Ext}^\bullet_{\mathcal{H}_{par}}(\mathcal{B}, M),$$  respectively.
        Then the above mentioned spectral sequences take the following forms (see Theorem~\ref{t_SS}):
   
    \[
        E^2_{p,q} = H^{par}_p(\mathcal{H},H_q(A,M) ) \Rightarrow H_{p+q}(A \#\mathcal{H}, M),
    \]
    and
    \[
        E_2^{p,q} = H^p_{par}(\mathcal{H},H^q(A,M) ) \Rightarrow H^{p+q}(A \#\mathcal{H}, M). 
    \]
    Note that the latter sequence extends for the Hopf theoretic setting the third quadrant cohomological spectral sequence obtained for the case of unital partial group actions in \cite[Theorem 4.1]{AlAlRePartialCohomology}. It is also shown in Section~\ref{sec:Hopf(co)homology} that if the action of $\mathcal{H}$ on $A$ is global, then we obtain the following global versions of our spectral sequences (see Corollary~\ref{cor:globalHopf}):
 \[
     E^2_{p,q} = \operatorname{Tor}_p^{\mathcal{H}}(K,H_q(A,M) ) \Rightarrow H_{p+q}(A \#\mathcal{H}, M),
 \]
 and
 \[
         E_2^{p,q} = \operatorname{Ext}^p_{\mathcal{H}}(K,H^q(A,M) ) \Rightarrow H^{p+q}(A \#\mathcal{H}, M).
 \]

 In the final Section~\ref{sec:projResol} we construct a projective resolution of $\mathcal{B}$ (see  
 Proposition~\ref{prop:projResol}) by means of a simplicial module which gives rise to an acyclic complex. Note that a projective resolution of $\mathcal{B}$ for the case of groups was obtained in
 \cite{DKhS2}. 

 \underline{In all what follows} $K$ will stand for a commutative (associative) unital ring.

\section{Preliminaries}\label{sec:prelim}

In this section, for the reader's convenience, we recall some facts on partial Hopf representations, partial Hopf actions, and Hochschild (co)homology.

\subsection{Partial representations and partial actions of Hopf algebras}\label{sec:HopfParRepParAc}


 The first definition of the concept of a partial representation of a Hopf algebra was given in \cite{AB2} in an asymmetric way, justified by the definition of a partial group action with one-sided ideals considered in \cite{CaenJan} and some constructions. Nevertheless, it became clear that a symmetric definition  gives additional advantages, and the authors of \cite{ALVES2015137} introduced the next:

\begin{definition} \cite[Definition 3.1]{ALVES2015137}
    Let $\mathcal{H}$ be a Hopf $K$-algebra, and let $A$ be a unital $K$-algebra. A \textit{partial representation} of $\mathcal{H}$ in $A$ is a linear map $\pi: \mathcal{H} \to A$ such that
    \begin{enumerate}[(PR1)]
        \item $\pi(1_{\mathcal{H}})=1_A$,
        \item $\pi(h)\pi(k_{(1)})\pi(S(k_{(2)})) = \pi(hk_{(1)})\pi(S(k_{(2)}))$,
        \item $\pi(h_{(1)})\pi(S(h_{(2)}))\pi(k) = \pi(h_{(1)})\pi(S(h_{(2)})k)$,
        \item $\pi(h)\pi(S(k_{(1)}))\pi(k_{(2)}) = \pi(hS(k_{(1)}))\pi(k_{(2)})$,
        \item $\pi(S(h_{(1)}))\pi(h_{(2)})\pi(k) = \pi(S(h_{(1)}))\pi(h_{(2)}k)$,
    \end{enumerate} 
    for all $h,k \in H.$
    \end{definition}
    
    As in the case of partial group representations, morphisms of partial representations of a fixed Hopf algebra $\mathcal{H}$ are defined in most natural way: if the pair $ \pi:\mathcal{H} \to A$ and $\pi': \mathcal{H} \to A' $ are partial representations, then by a \textit{morphism} $\pi \to \pi ' $ we understand an algebra homomorphism $f: A \to A'$ such that $\pi' = f \circ \pi$. Following \cite{ALVES2015137} we denote by $\operatorname{ParRep}_{\mathcal{H}}$  the category of the partial representations of $\mathcal{H}$ and their morphisms.

\begin{remark}
   As it is pointed out in   \cite[Remark 3.2]{ALVES2015137}, if the Hopf algebra $\mathcal{H}$ is cocommutative, then the items (PR3) and (PR4) can be removed from the definition of a partial representation, being consequences of (PR1), (PR2) and (PR5). 
\end{remark}

 Another crucial for us concept is that of a partial Hopf action first defined in \cite{CaenJan}.

\begin{definition} (see \cite{AB2}).
    A left partial action of a Hopf algebra $\mathcal{H}$ on a unital algebra $A$ is a linear map
    \begin{align*}
        \cdot : \mathcal{H} \otimes A & \to A \\
                h \otimes a &\mapsto h \cdot a,
    \end{align*}
    such that
    \begin{itemize}
        \item [(PA1)] $1_{\mathcal{H}} \cdot a = a$ for all $a \in A$;
        \item [(PA2)] $h \cdot (ab) = (h_{(1)} \cdot a)(h_{(2)} \cdot b)$, for all $h \in \mathcal{H}$, $a, b \in A$;
        \item [(PA3)] $h \cdot (k \cdot a) = (h_{(1)} \cdot 1_A)(h_{(2)}k \cdot a)$ for all $h,k \in \mathcal{H}$, $a \in A$. 
    \end{itemize}
    The algebra $A$, on which $\mathcal{H}$ acts partially is called a partial left $\mathcal{H}$-module algebra. Recall also that  a partial action of $\mathcal{H}$ on $A$ is said to be \textit{symmetric} if in addition it satisfies
    \begin{enumerate}[(PA4)]
        \item $h \cdot (k \cdot a) = (h_{(1)}k \cdot a)(h_{(2)} \cdot 1_A)$ for all $h,k \in \mathcal{H}$, $a \in A$. 
    \end{enumerate}
\end{definition}

Given a partial action of a Hopf algebra $\mathcal{H}$ on a unital algebra $A$, an associative product on $A \otimes \mathcal{H}$ is defined  by
\[
    (a \otimes h)(b \otimes k) = a(h_{(1)} \cdot b) \otimes h_{(2)}k.
\]
Then the \textit{partial smash product} (see \cite{CaenJan}) is the unital algebra
\[
    A \# \mathcal{H} := (A \otimes \mathcal{H})(1_A \otimes 1_{\mathcal{H}}):= \{ x (1_A \otimes 1_{\mathcal{H}} ) : x \in A \otimes \mathcal{H} \}.
\] 
 This algebra is generated by the  elements of the form
\[
    a \# h = a(h_{(1)} \cdot 1_A) \otimes h_{(2)}.
\]

\begin{lemma}\label{lemma from Alvares-Batista} (see \cite{AB2}).
    Let $\mathcal{H}$ be a Hopf algebra acting partially on a unital algebra $A.$ Then in the partial smash product $A \# \mathcal{H}$ we have:
    \begin{enumerate}[(i)]
        \item $(a \# h)(b \# k) = a(h_{(1)} \cdot b) \# h_{(2)} k$;
        \item $a \# h = a(h_{(1)} \cdot 1_A) \# h_{(2)}$;
        \item the map $\phi_0: A \to A \# \mathcal{H}$ given by $\phi_0(a)=a \# 1_{\mathcal{H}}$ is an algebra homomorphism.
    \end{enumerate}
\end{lemma}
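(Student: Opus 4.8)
The plan is to reduce all three statements to one observation: the element $e := 1_A \otimes 1_{\mathcal{H}}$ is an idempotent that acts as a left identity on $A \otimes \mathcal{H}$, after which everything follows formally from the associativity of the product on $A \otimes \mathcal{H}$. First I would record this crucial property of $e$. Using $\Delta(1_{\mathcal{H}}) = 1_{\mathcal{H}} \otimes 1_{\mathcal{H}}$ together with axiom (PA1), the product formula gives
$$e(a \otimes h) = 1_A(1_{\mathcal{H}} \cdot a) \otimes 1_{\mathcal{H}} h = a \otimes h,$$
so $e$ is a left identity on all of $A \otimes \mathcal{H}$; in particular $e$ is idempotent. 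On the other side, the definition of the product yields
$$(a \otimes h)e = a(h_{(1)} \cdot 1_A) \otimes h_{(2)} = a \# h,$$
which identifies $A \# \mathcal{H} = (A \otimes \mathcal{H})e$ with the $K$-span of the elements $a \# h$ and shows that $e = 1_A \# 1_{\mathcal{H}}$.

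Next I would prove (i). Writing $a \# h = (a \otimes h)e$ and $b \# k = (b \otimes k)e$ and using associativity together with the left-identity property $e(b \otimes k) = b \otimes k$,
\begin{align*}
(a \# h)(b \# k) &= (a \otimes h)\,e\,(b \otimes k)\,e = (a \otimes h)(b \otimes k)\,e \\
&= \big(a(h_{(1)} \cdot b) \otimes h_{(2)}k\big)e = a(h_{(1)} \cdot b) \# h_{(2)}k.
\end{align*}
The same computation shows that the product of two generators is again of the form $c \# g$, so that $A \# \mathcal{H}$ is closed under multiplication, i.e. genuinely a subalgebra with unit $e$.

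For (ii), I would specialize (i) to $b = 1_A$, $k = 1_{\mathcal{H}}$ and use that $e = 1_A \# 1_{\mathcal{H}}$ is a right identity (equivalently, that $e$ is idempotent):
$$a \# h = (a \# h)e = (a \# h)(1_A \# 1_{\mathcal{H}}) = a(h_{(1)} \cdot 1_A) \# h_{(2)}1_{\mathcal{H}} = a(h_{(1)} \cdot 1_A) \# h_{(2)}.$$
Finally, for (iii), $K$-linearity of $\phi_0$ is immediate, $\phi_0(1_A) = 1_A \# 1_{\mathcal{H}} = e$ is the identity element, and (i) combined with (PA1) gives
$$\phi_0(a)\phi_0(b) = (a \# 1_{\mathcal{H}})(b \# 1_{\mathcal{H}}) = a(1_{\mathcal{H}} \cdot b) \# 1_{\mathcal{H}} = ab \# 1_{\mathcal{H}} = \phi_0(ab).$$

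There is no serious obstacle here: the content is entirely the Sweedler bookkeeping and the associativity of the product on $A \otimes \mathcal{H}$, which is taken as given where that product is introduced. The two points I would handle with care are the left-identity computation for $e$, since that is exactly where (PA1) and the grouplike property of $1_{\mathcal{H}}$ enter, and, in (i), the re-comultiplication of the factor $h_{(2)}k$ produced when multiplying by $e$ on the right; everything else is formal manipulation.
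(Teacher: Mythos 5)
Your proof is correct. Note that the paper itself gives no proof of this lemma --- it is recalled from \cite{AB2} --- so the relevant comparison is with the standard argument in the cited literature, and your route via the idempotent $e = 1_A \otimes 1_{\mathcal{H}}$ is essentially that argument: one checks that $e$ is a left identity on all of $A \otimes \mathcal{H}$ (this is exactly where (PA1) and $\Delta(1_{\mathcal{H}}) = 1_{\mathcal{H}} \otimes 1_{\mathcal{H}}$ enter), identifies $A \# \mathcal{H} = (A \otimes \mathcal{H})e$ with unit $e$, and then (i)--(iii) follow formally. The only external input is the associativity of the product on $A \otimes \mathcal{H}$, which you correctly take as given since the paper asserts it when introducing the product; it is worth being aware that this associativity is itself a nontrivial consequence of (PA1)--(PA3) (it uses (PA3) to rewrite $h \cdot (k \cdot c)$ and (PA2) to absorb the factor $h \cdot 1_A$), so your proof is complete relative to exactly the facts the paper puts on the table, and your two flagged points of care --- the left-identity computation and the linear re-application of $\#$ to the factor $h_{(2)}k$ in (i) --- are precisely the right ones.
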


It is convenient for us to single out the following fact:
\begin{lemma} \label{l_commutSmash}
Suppose that the partial action of a Hopf algebra $\mathcal{H}$ on a unital algebra $A$ is symmetric. Then
     \[
         (b \# 1_{\mathcal{H}})(1_A \# S(h))=(1_A \# S(h_{(1)}))(h_{(2)}\cdot b \# 1_{\mathcal{H}}),
     \] for all  $h \in \mathcal{H}$ and $b \in A.$
 \end{lemma}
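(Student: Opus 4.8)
The plan is to compute both sides of the asserted identity directly in the smash product and to reduce each of them to the single generator $b \# S(h)$. Throughout I would rely on the multiplication rule $(x \# g)(y \# l) = x(g_{(1)} \cdot y) \# g_{(2)} l$ from Lemma~\ref{lemma from Alvares-Batista}(i), on the identity $a \# k = a(k_{(1)} \cdot 1_A) \# k_{(2)}$ from Lemma~\ref{lemma from Alvares-Batista}(ii), and on the fact that the antipode is an anti-coalgebra morphism, so that $\Delta(S(x)) = S(x_{(2)}) \otimes S(x_{(1)})$.

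First I would dispatch the left-hand side. Applying the multiplication rule to $(b \# 1_{\mathcal{H}})(1_A \# S(h))$ and using $\Delta(1_{\mathcal{H}}) = 1_{\mathcal{H}} \otimes 1_{\mathcal{H}}$ together with (PA1), the factor $1_{\mathcal{H}} \cdot 1_A$ collapses and the left-hand side is immediately seen to equal $b \# S(h)$. The substance of the argument is therefore entirely on the right-hand side.

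For the right-hand side I would apply the multiplication rule to $(1_A \# S(h_{(1)}))((h_{(2)} \cdot b) \# 1_{\mathcal{H}})$; this forces me to comultiply $S(h_{(1)})$, which by the anti-coalgebra property and coassociativity rewrites the expression as $[S(h_{(2)}) \cdot (h_{(3)} \cdot b)] \# S(h_{(1)})$. Here the symmetric axiom (PA4) is the decisive ingredient: applied to $S(h_{(2)}) \cdot (h_{(3)} \cdot b)$ it produces $(S(h_{(3)}) h_{(4)} \cdot b)(S(h_{(2)}) \cdot 1_A)$, placing the adjacent Sweedler components $S(h_{(3)})$ and $h_{(4)}$ next to one another. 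At this stage the antipode relation $S(x_{(1)}) x_{(2)} = \varepsilon(x) 1_{\mathcal{H}}$ collapses $S(h_{(3)}) h_{(4)}$ to a scalar, the counit axiom absorbs the resulting $\varepsilon$, and (PA1) removes the leftover $1_{\mathcal{H}} \cdot b = b$, so that the right-hand side becomes $b(S(h_{(2)}) \cdot 1_A) \# S(h_{(1)})$. Recognizing this as the expansion of $b \# S(h)$ via Lemma~\ref{lemma from Alvares-Batista}(ii) together with $\Delta(S(h)) = S(h_{(2)}) \otimes S(h_{(1)})$ then matches the two sides and finishes the proof.

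The only delicate point is the Sweedler bookkeeping around the antipode, and the crux is that it is (PA4), not (PA3), that brings $S(h_{(3)})$ and $h_{(4)}$ into adjacent position: with the non-symmetric axiom (PA3) one would instead obtain the non-telescoping product $S(h_{(2)}) h_{(4)}$, whose indices are not adjacent, and the antipode--counit collapse would fail. This is precisely the step where the symmetry hypothesis on the partial action is used, so I would take care to invoke (PA4) at the right moment and to track the comultiplication of the antipode correctly.
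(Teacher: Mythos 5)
Your proof is correct and follows essentially the same route as the paper's: both reduce the right-hand side via the multiplication rule and the anti-coalgebra property of $S$ to $S(h_{(2)})\cdot(h_{(3)}\cdot b)\ \#\ S(h_{(1)})$, then invoke the symmetric axiom (PA4) to place $S(h_{(3)})$ and $h_{(4)}$ adjacently so the antipode--counit axioms collapse them, arriving at $b(S(h_{(2)})\cdot 1_A)\ \#\ S(h_{(1)}) = b\ \#\ S(h)$, which is trivially the left-hand side. Your closing observation that (PA3) would instead produce the non-telescoping product $S(h_{(2)})h_{(4)}$ correctly identifies where the symmetry hypothesis enters, a point the paper leaves implicit.
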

 
 \begin{proof}  
Since our partial action is symmetric, in view of Lemma \ref{lemma from Alvares-Batista}  we have:
 \begin{align*} (1_A \# S(h_{(1)}))(h_{(2)}\cdot b \# 1_{\mathcal{H}})&=  S(h_{(2)})(h_{(3)}\cdot b) \#  S(h_{(1)})\\&=
  (S(h_{(3)}) h_{(4)})\cdot b) (S(h_{(2)}) \cdot 1_A) \#  S(h_{(1)})\\&= 
  (\varepsilon (h_{(3)})1_{\mathcal{H}} )\cdot b) (S(h_{(2)}) \cdot 1_A) \#  S(h_{(1)})\\&= 
   b (S(h_{(2)}) \cdot 1_A) \#  S(h_{(1)})\\&= 
    b \# S(h)= (b \# 1_{\mathcal{H}})(1_A \# S(h)).
   \end{align*} \end{proof}
   

We proceed by recalling the definition of the associative algebra 
$\mathcal{H}_{par}$ which governs the partial  representations of a Hopf algebra $H.$

\begin{definition} \cite[Definition 4.1]{ALVES2015137}
    Let $\mathcal{H}$ be a Hopf algebra and let $T(\mathcal{H})$ be the tensor algebra of the $K$-module $\mathcal{H}$. The \textit{partial ``Hopf''} algebra $\mathcal{H}_{par}$ is the quotient of $T(\mathcal{H})$ by the ideal $I$ generated by the elements of the form
    \begin{enumerate}[(1)]
        \item $1_{\mathcal{H}} - 1_{T(\mathcal{H})}$;
        \item $h \otimes k_{(1)} \otimes S(k_{(2)}) - h k_{(1)} \otimes S(k_{(2)})$, for all $h,k \in \mathcal{H}$;
        \item $h_{(1)} \otimes S(h_{(2)}) \otimes k - h_{(1)} \otimes S(h_{(2)}) k$, for all $h,k \in \mathcal{H}$;
        \item $h \otimes S(k_{(1)}) \otimes k_{(2)} - h S(k_{(1)}) \otimes k_{(2)}$, for all $h,k \in \mathcal{H}$;
        \item $S(h_{(1)}) \otimes h_{(2)} \otimes k - S(h_{(1)}) \otimes h_{(2)} k $, for all $h,k \in \mathcal{H}$.
    \end{enumerate}
\end{definition}

Observe that by \cite[Theorem 4.10]{ALVES2015137} the algebra  $\mathcal{H}_{par}$ possesses the structure of a Hopf algebroid. Denote by $[h]$ the class of $h \in \mathcal{H}$ in 
$\mathcal{H}_{par}$ and consider the map 
\begin{align*}
    [_\_]: \mathcal{H}   & \to \mathcal{H}_{par} \\ 
                h       & \mapsto [h].
\end{align*}  
Recall from \cite{ALVES2015137} the following easily verified relations: 
\begin{enumerate}
    \item $[\alpha h + \beta k] = \alpha[h] + \beta[k]$, for all $\alpha, \beta \in K$ and $h,k \in \mathcal{H}$;
    \item $[1_{\mathcal{H}}] = 1_{\mathcal{H}_{par}}$;
    \item $[h]  [k_{(1)}] [S(k_{(2)})] = [h k_{(1)}] [S(k_{(2)})]$, for all $h,k \in \mathcal{H}$;        
    \item $[h_{(1)} ] [S(h_{(2)})] [k] = [h_{(1)}] [S(h_{(2)}) k]$, for all $h,k \in \mathcal{H}$;
    \item $[h] [S(k_{(1)})] [k_{(2)}] = [h S(k_{(1)})] [k_{(2)}]$, for all $h,k \in \mathcal{H}$;
    \item $[S(h_{(1)})]  [h_{(2)}] [k] = [S(h_{(1)})] [h_{(2)} k] $, for all $h,k \in \mathcal{H}$.
\end{enumerate}

 Thus, the linear map $[_\_]$  is a partial representation  $\mathcal{H} \to \mathcal{H}_{par}$. The following  universal property of the partial ``Hopf'' algebra  expresses the control of $\mathcal{H}_{par}$ on the partial representations of  $H.$

\begin{theorem}(see \cite[Theorem 4.2]{ALVES2015137}) \label{t_Universal Property Partial Actions}
    For every partial representation $\pi: \mathcal{H} \to A$ there is a unique morphism of algebras $\hat{\pi}: \mathcal{H}_{par} \to A$ such that $\pi= \hat{\pi} \circ [_\_]$. Conversely, given an algebra morphism $\pi: \mathcal{H}_{par} \to A,$ there exists a unique partial representation $\pi_\phi: \mathcal{H} \to A$ such that $\phi = \hat{\pi}_\phi$.
\end{theorem}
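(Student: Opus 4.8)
The plan is to exploit the universal property of the tensor algebra together with the observation that the defining relations of $I$ are exactly the partial-representation axioms. First I would recall that for any $K$-linear map $\pi\colon \mathcal{H}\to A$ into a unital $K$-algebra there is a unique homomorphism of algebras $\tilde{\pi}\colon T(\mathcal{H})\to A$ extending $\pi$; this is the universal property of the tensor algebra $T(\mathcal{H})$. The content of the theorem then reduces to showing that $\tilde{\pi}$ factors through the quotient $\mathcal{H}_{par}=T(\mathcal{H})/I$ precisely when $\pi$ is a partial representation.

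For the existence part, suppose $\pi$ is a partial representation. I would check that $\tilde{\pi}$ annihilates each generator of $I$. Since $\tilde{\pi}$ is multiplicative and $K$-linear, applying it to the generator in item (1) yields $\pi(1_{\mathcal{H}})-1_A$, which vanishes by (PR1); applying it to the generator in item (2), and using Sweedler notation together with multiplicativity, yields $\pi(h)\pi(k_{(1)})\pi(S(k_{(2)}))-\pi(hk_{(1)})\pi(S(k_{(2)}))$, which vanishes by (PR2); and similarly items (3), (4), (5) vanish by (PR3), (PR4), (PR5) respectively. Hence $I\subseteq\ker\tilde{\pi}$, so $\tilde{\pi}$ descends to an algebra homomorphism $\hat{\pi}\colon\mathcal{H}_{par}\to A$ with $\hat{\pi}\circ[_\_]=\pi$. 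Uniqueness of $\hat{\pi}$ is immediate, because $\mathcal{H}_{par}$ is generated as a $K$-algebra by the image of $[_\_]$, on which any such $\hat{\pi}$ is forced to coincide with $\pi$.

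For the converse, given an algebra morphism $\phi\colon\mathcal{H}_{par}\to A$, I would set $\pi_\phi:=\phi\circ[_\_]$. Since $[_\_]$ is itself a partial representation (recorded just above via the listed relations for $[_\_]$), and the composite of a partial representation with an algebra homomorphism is again a partial representation --- each axiom (PR1)--(PR5) being preserved because $\phi$ is unital, multiplicative and $K$-linear --- the map $\pi_\phi$ is a partial representation. I would then observe that the two assignments $\pi\mapsto\hat{\pi}$ and $\phi\mapsto\pi_\phi$ are mutually inverse: indeed $\widehat{\pi_\phi}=\phi$ by the uniqueness already established, while $\pi_{\hat{\pi}}=\hat{\pi}\circ[_\_]=\pi$ holds by construction. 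This produces the claimed bijective correspondence.

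The only step demanding genuine (though entirely routine) verification is the vanishing of $\tilde{\pi}$ on the generators of $I$; everything else is formal manipulation with the tensor-algebra universal property. I expect no real obstacle here, since the generators of $I$ were evidently designed to mirror (PR1)--(PR5) term by term, so the essential point is merely to make the Sweedler-notation bookkeeping explicit and invoke the matching axiom in each case.
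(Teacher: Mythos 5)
Your proof is correct: the paper itself states this result without proof, quoting it from \cite[Theorem 4.2]{ALVES2015137}, and your argument --- the universal property of the tensor algebra plus the term-by-term matching of the generators (1)--(5) of the ideal $I$ with the axioms (PR1)--(PR5), with uniqueness following because $\mathcal{H}_{par}$ is generated as an algebra by the image of $[\_]$ --- is exactly the standard proof given in that reference. Note only that the statement as printed has a typographical slip in the converse (the algebra morphism should be named $\phi$, with $\pi_\phi := \phi \circ [\_]$ the associated partial representation), which you have implicitly and correctly repaired.
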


 The universal property of $\mathcal{H}_{par}$ relates the modules over $\mathcal{H}_{par}$ with the partial $H$-modules, the latter being defined as follows:

\begin{definition}\cite[Definition 5.1]{ALVES2015137}
    Let $\mathcal{H}$ be a Hopf algebra. A \textit{partial module} over $\mathcal{H}$ is a pair $(M, \pi)$, where $M$ is a $K$-module and $\pi: \mathcal{H} \to \operatorname{End}_K(M)$ is a left partial representation of $\mathcal{H}$.
    \end{definition}

  As in \cite{ALVES2015137}, by a \textit{morphism}   $(M, \pi) \to (M', \pi')$ of partial $\mathcal{H}$-modules we mean    a $K$-linear map $f: M \to M'$ such that $f \circ \pi(h) = \pi'(h) \circ f$ for all $h \in \mathcal{H},$ and by $_\mathcal{H} \mathcal{M}^{par}$ we denote  the category of the left partial $\mathcal{H}$-modules and their  morphisms.

The following fact is
 \cite[Corollary 5.3]{ALVES2015137}.
\begin{proposition}\label{prop:CorollaryFromAlves-Batista-Vercruyse}
    There is an isomorphism of categories $_\mathcal{H}\mathcal{M}^{par} \cong \mathcal{H}_{par}$-\textbf{Mod}. Given a  partial $\mathcal{H}$-module $(M, \pi),$ the action of $\mathcal{H}_{par}$ on $M$ is determined by
    \begin{equation}
        [h] \triangleright m := \pi_h(m).
    \end{equation}
\end{proposition}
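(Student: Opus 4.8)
The plan is to deduce everything from the universal property of $\mathcal{H}_{par}$ stated in Theorem~\ref{t_Universal Property Partial Actions}, applied with the target algebra taken to be the endomorphism algebra $A = \operatorname{End}_K(M)$; the only genuine verification left is the compatibility on morphisms, and for that the crucial observation is that $\mathcal{H}_{par}$, being a quotient of the tensor algebra $T(\mathcal{H})$, is generated as a $K$-algebra by the elements $[h]$.

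First I would construct a functor $F \colon {}_{\mathcal{H}}\mathcal{M}^{par} \to \mathcal{H}_{par}\text{-}\mathbf{Mod}$. Given a partial $\mathcal{H}$-module $(M,\pi)$, the map $\pi \colon \mathcal{H} \to \operatorname{End}_K(M)$ is by definition a partial representation, so by Theorem~\ref{t_Universal Property Partial Actions} there is a unique algebra homomorphism $\hat{\pi} \colon \mathcal{H}_{par} \to \operatorname{End}_K(M)$ with $\pi = \hat{\pi} \circ [_\_]$. This endows $M$ with a left $\mathcal{H}_{par}$-module structure via $u \triangleright m := \hat{\pi}(u)(m)$, which on the generators reads $[h] \triangleright m = \pi_h(m)$, exactly as asserted in the statement. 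In the opposite direction I would define $G$: a left $\mathcal{H}_{par}$-module $M$ is the same datum as an algebra morphism $\phi \colon \mathcal{H}_{par} \to \operatorname{End}_K(M)$, and the converse part of Theorem~\ref{t_Universal Property Partial Actions} yields a partial representation $\pi_\phi = \phi \circ [_\_] \colon \mathcal{H} \to \operatorname{End}_K(M)$, so that $(M,\pi_\phi)$ is a partial $\mathcal{H}$-module. That $F$ and $G$ are mutually inverse on objects is immediate from the uniqueness clauses of the universal property, which give $\widehat{\pi_\phi} = \phi$ and $\hat{\pi} \circ [_\_] = \pi$.

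The step requiring the most care is the identification of morphisms. A morphism of partial $\mathcal{H}$-modules $f \colon (M,\pi) \to (M',\pi')$ is a $K$-linear map satisfying $f \circ \pi(h) = \pi'(h) \circ f$ for all $h \in \mathcal{H}$, whereas a morphism of $\mathcal{H}_{par}$-modules satisfies $f \circ \hat{\pi}(u) = \hat{\pi}'(u) \circ f$ for all $u \in \mathcal{H}_{par}$. Since $\hat{\pi}([h]) = \pi(h)$ and $\hat{\pi}'([h]) = \pi'(h)$, the $\mathcal{H}_{par}$-linearity condition restricts to the partial-module condition on the generators. Conversely, because $\mathcal{H}_{par}$ is generated as a $K$-algebra by $\{[h] : h \in \mathcal{H}\}$, and because the set of elements $u$ for which $f \circ \hat{\pi}(u) = \hat{\pi}'(u) \circ f$ holds is a $K$-subalgebra (it is closed under $K$-linear combinations and, by the usual intertwining computation $f \circ \hat{\pi}(uv) = f \circ \hat{\pi}(u)\hat{\pi}(v) = \hat{\pi}'(u) \circ f \circ \hat{\pi}(v) = \hat{\pi}'(uv) \circ f$, under products), intertwining on the generators $[h]$ forces intertwining on all of $\mathcal{H}_{par}$. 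Hence the two notions of morphism literally coincide, and $F$, $G$ are inverse isomorphisms of categories.

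I do not expect a deep obstacle here: the substantive content is entirely absorbed into the universal property of $\mathcal{H}_{par}$. The one point deserving attention is the morphism compatibility in the last step, whose validity rests on $\mathcal{H}_{par}$ being generated by the classes $[h]$, so that an intertwiner for the generators automatically becomes an intertwiner for the whole algebra.
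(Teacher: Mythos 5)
Your proposal is correct and follows essentially the same route as the paper, which states this result without an internal proof, citing \cite{ALVES2015137}: there it is deduced exactly as you do, from the universal property of $\mathcal{H}_{par}$ (your Theorem~\ref{t_Universal Property Partial Actions}), with mutual inverseness on objects coming from the uniqueness clauses and the morphism identification resting on $\mathcal{H}_{par}$ being generated as a unital $K$-algebra by the classes $[h]$. Your explicit check that the intertwiners for $\hat{\pi}$, $\hat{\pi}'$ form a unital subalgebra containing the generators correctly fills in the only detail usually left implicit.
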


\subsection{Some background on Hochschild (co)homology}

For the reader's convenience, we proceed by recalling some well-known background on Hochschild (co)homology.

\begin{definition}
    If $A$ is a $K$-algebra, where $K$ is a unital commutative ring, then its \textit{enveloping algebra} is
    \[
        A^e = A \otimes_K A^{op},
    \] 
    where $A^{op}$ stands for the opposite algebra of $A$.
\end{definition}

\begin{proposition}
    Let $X$ be an $A$-bimodule, then $X$ is a left $A^e$-module with action
    \[
        (a\otimes b)\cdot x := a \cdot x \cdot b
    \]
    and is a right $A^e$-module with action
    \[
        x \cdot (a\otimes b) := b \cdot x \cdot a.
    \]
\end{proposition}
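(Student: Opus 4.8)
The plan is to verify directly that the two prescribed actions make $X$ into a left and a right $A^e$-module respectively, the only genuine content being the check that each action is associative and unital. Since $K$ is central and all tensor products are over $K$, additivity and $K$-bilinearity of both formulas are immediate from the bimodule structure, so I would dispatch those quickly and focus attention on the multiplicative compatibility.

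For the left-module claim I would take two elementary tensors $a\otimes b$ and $a'\otimes b'$ in $A^e=A\otimes_K A^{op}$, recall that their product in $A^e$ is $(a\otimes b)(a'\otimes b')=aa'\otimes b'b$ (the flip in the second slot being the defining feature of the opposite algebra), and then compute
\[
\bigl((a\otimes b)(a'\otimes b')\bigr)\cdot x=(aa'\otimes b'b)\cdot x=(aa')\cdot x\cdot(b'b).
\]
On the other side,
\[
(a\otimes b)\cdot\bigl((a'\otimes b')\cdot x\bigr)=(a\otimes b)\cdot(a'\cdot x\cdot b')=a\cdot(a'\cdot x\cdot b')\cdot b.
\]
The two agree precisely because the left and right $A$-actions on the bimodule $X$ are themselves associative and commute with each other, giving $a\cdot a'\cdot x\cdot b'\cdot b=(aa')\cdot x\cdot(b'b)$. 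The unit $1_{A^e}=1_A\otimes 1_A$ plainly acts as the identity. I would then extend from elementary tensors to all of $A^e$ by $K$-linearity.

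For the right-module claim the computation is formally the same but with the order reversed, and here the opposite-algebra twist is exactly what makes the formula $x\cdot(a\otimes b):=b\cdot x\cdot a$ associative. Writing it out,
\[
x\cdot\bigl((a\otimes b)(a'\otimes b')\bigr)=x\cdot(aa'\otimes b'b)=(b'b)\cdot x\cdot(aa'),
\]
whereas
\[
\bigl(x\cdot(a\otimes b)\bigr)\cdot(a'\otimes b')=(b\cdot x\cdot a)\cdot(a'\otimes b')=b'\cdot(b\cdot x\cdot a)\cdot a',
\]
and these coincide by the same associativity and commuting-actions properties of the bimodule. The point I would emphasize is that placing $b$ on the \emph{left} and $a$ on the \emph{right} in the definition of the right action is precisely what compensates for the order reversal in the product of $A^e$, so that the twist in $A^{op}$ and the twist in the action formula cancel.

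There is no real obstacle here; this is a foundational, essentially bookkeeping statement. The only place one must stay alert is in correctly tracking the opposite-algebra multiplication $b'b$ versus $bb'$ and in matching it against the order of the two actions in each formula, since a single misplaced flip would break associativity. I would present both verifications on elementary tensors, note that $K$-bilinearity lets us conclude for general elements, and check the unit in each case, thereby completing the proof.
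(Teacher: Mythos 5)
Your verification is correct: the product in $A^e=A\otimes_K A^{op}$ is $(a\otimes b)(a'\otimes b')=aa'\otimes b'b$, and your computations show exactly how the opposite-algebra flip cancels against the placement of $b$ on the left and $a$ on the right in the right-action formula, with well-definedness on elementary tensors handled by $K$-bilinearity of $(a,b)\mapsto a\cdot x\cdot b$. The paper states this proposition as standard background and offers no proof at all, so your direct elementary-tensor check is precisely the routine argument the paper implicitly takes for granted; there is no gap and nothing further to compare.
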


\begin{definition}
    Let $A$ be a unital $K$-algebra and $M$ an $A$-bimodule.  The Hochschild homology of $A$ with coefficients in $M$ is defined by
    \[
        H_\bullet(A,M):= \operatorname{Tor}_\bullet^{A^e}(A,M),
    \]
    i.e., $H_\bullet(A,-)$ is the left derived functor of $A \otimes_{A^e}-$. Dually, we define the Hochschild cohomology of $A$ with coefficients in $M$ by
    \[
        H^\bullet(A,M):= \operatorname{Ext}^\bullet_{A^e}(A,M).
    \]
    
\end{definition}

The following easy property of tensor products of bimodules will be used constantly in the development of this work.

\begin{lemma} \label{l_envl}
    Let $X$ and $Y$ be $A$-bimodules. Then for every $a \in A$, $x \in X$ and $y \in Y$ we have
    \[
        a \cdot x \otimes_{A^e} y = x \otimes_{A^e} y \cdot a
    \]
    and
    \[
        x \cdot a\otimes_{A^e} y = x \otimes_{A^e} a \cdot y.
    \]
\end{lemma}

\begin{proof} 
    By direct computations we obtain

    \begin{minipage}{.45\textwidth}
        \begin{align*}
            a \cdot x \otimes_{A^e} y   & = x \cdot (1_A \otimes a) \otimes_{A^e} y \\ 
                                        & = x \otimes_{A^e} (1_A \otimes a) \cdot y \\ 
                                        & = x \otimes_{A^e} y \cdot a,
        \end{align*}
    \end{minipage}%
    \begin{minipage}{0.45\textwidth}
        \begin{align*}
            x \cdot a  \otimes_{A^e} y  & = x \cdot (a \otimes 1_A) \otimes_{A^e} y \\ 
                                        & = x \otimes_{A^e} (a \otimes 1_A) \cdot y \\ 
                                        & = x \otimes_{A^e} a \cdot y.
        \end{align*}
    \end{minipage}

\end{proof}


\section{Homology of the partial smash product}\label{sec:homology}

\underline{In all what follows} $\mathcal{H}$ will be a cocommutative Hopf $K$-algebra, $A$ a unital $K$-algebra and 
 \begin{align*}
     \cdot : \mathcal{H} \otimes A &\to A \\
             h \otimes a &\mapsto h \cdot a
 \end{align*}
 a symmetric partial action of $\mathcal{H}$ on $A$.
 
 \begin{proposition} \label{p_parrep}
     Let  $R$ be a $K$-algebra, $\pi: \mathcal{H} \to R$ a partial representation and $X$ an $R$-bimodule. Then the map $\pi': \mathcal{H} \to \operatorname{End}_K(X),$ given by
     \[
         \pi'_h(x):= \sum \pi(h_1) \cdot x \cdot \pi(S(h_2)),
     \]
     is a partial representation.
 \end{proposition}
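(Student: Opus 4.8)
The plan is to verify the defining axioms of a partial representation for $\pi'$ directly. Since $\mathcal{H}$ is cocommutative, by the Remark following the definition it suffices to check (PR1), (PR2) and (PR5); moreover, I will freely use that $\pi$, being a genuine partial representation, satisfies all of (PR1)--(PR5), together with the two standard consequences of cocommutativity that $S^2=\mathrm{id}_{\mathcal{H}}$ and $\Delta\circ S=(S\otimes S)\circ\Delta$, and that the iterated coproduct of $\mathcal{H}$ is invariant under arbitrary permutations of its tensor factors. Throughout I exploit that $X$ is an $R$-bimodule, so the left and right multiplications by elements of $R$ commute: expanding a composite $\pi'_a\pi'_b\cdots$ applied to $x$ produces an expression in which all factors $\pi(\cdots)$ coming from left actions stand to the left of $x$ and all factors coming from right actions stand to its right. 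It is thus convenient to record such an expression as an element $u\otimes v$ of $R\otimes_K R$ acting on $x$ by $u\otimes v\mapsto u\,x\,v$, and to manipulate the two tensor slots independently. Axiom (PR1) is immediate, since $\pi(1_{\mathcal{H}})=1_R$ and $S(1_{\mathcal{H}})=1_{\mathcal{H}}$ give $\pi'_{1_{\mathcal{H}}}(x)=\pi(1_{\mathcal{H}})\,x\,\pi(S(1_{\mathcal{H}}))=x$, so $\pi'_{1_{\mathcal{H}}}=\mathrm{id}_X$.

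For (PR2) I would expand both $\pi'_h\pi'_{k_{(1)}}\pi'_{S(k_{(2)})}$ and $\pi'_{hk_{(1)}}\pi'_{S(k_{(2)})}$ applied to $x$, using $\Delta\circ S=(S\otimes S)\circ\Delta$ to resolve the coproducts of the antipoded arguments and $S^2=\mathrm{id}$ to cancel the double antipode appearing in the right slot. This yields, on each side, a sum over a four-fold coproduct of $k$ and a two-fold coproduct of $h$, of the form $u\otimes v$ with $u$ (the left slot) and $v$ (the right slot) each a product of three factors $\pi(\cdots)$. The key manoeuvre is to isolate the pair of $k$-factors in the left slot by writing $\Delta(k)=k'\otimes k''$ and assigning $k'$ to the left slot and $k''$ to the right slot; cocommutativity lets me permute the components of $\Delta(k')$ so that they appear as a consecutive pair $(k'_{(1)},S(k'_{(2)}))$, whereupon (PR2) for $\pi$ (applied with the $k''$-factors as spectators) rewrites $\pi(h_{(1)})\pi(k'_{(1)})\pi(S(k'_{(2)}))$ as $\pi(h_{(1)}k'_{(1)})\pi(S(k'_{(2)}))$. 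Symmetrically, in the right slot I bring the two $k''$-factors into a consecutive pair and apply (PR3) to merge the antipoded factors, using $S(a)S(b)=S(ba)$. Re-merging the coproducts and applying cocommutativity once more to relabel the $k$-indices identifies the two expressions, which gives (PR2).

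The verification of (PR5) follows the same template: expanding $\pi'_{S(h_{(1)})}\pi'_{h_{(2)}}\pi'_{k}$ and $\pi'_{S(h_{(1)})}\pi'_{h_{(2)}k}$ at $x$, resolving the coproduct of $S(h_{(1)})$ and cancelling a double antipode, I obtain sums over a four-fold coproduct of $h$ and a two-fold coproduct of $k$, and I then apply (PR5) for $\pi$ to the left slot and (PR4) for $\pi$ to the right slot after placing the relevant $h$-factors in consecutive positions, finishing again by a relabeling of indices. I expect the main obstacle to be purely one of bookkeeping, namely justifying that an axiom of $\pi$ may be invoked on only part of an entangled Sweedler sum. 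This is exactly what the splitting $\Delta(k)=k'\otimes k''$ (coassociativity) together with the permutation-invariance of the iterated coproduct (cocommutativity) makes rigorous: it isolates the factors on which the axiom acts as a genuine sub-coproduct, while the remaining factors ride along as constants.
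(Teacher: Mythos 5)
Your proposal is correct and follows essentially the same route as the paper's proof: reduce to (PR1), (PR2), (PR5) by cocommutativity, expand the composites into a left slot and a right slot acting on $x$, and apply the axioms of $\pi$ slot-wise after cocommutative relabeling of the Sweedler indices. The only (cosmetic) difference is in the right slot, where you first cancel the double antipode via $S^2=\mathrm{id}$ and invoke (PR3)/(PR4), while the paper keeps $S^2$ and in effect uses the (PR5)/(PR2)-shaped identities with $m=S(h)$ or $m=S(k)$ --- the two bookkeepings are equivalent.
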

 
 \begin{proof} \,
     \begin{enumerate}[(PR1)]
         \item $ \pi'_{1}(x) =   \pi(1) \cdot x \cdot \pi(S(1)) = \pi(1) \cdot x \cdot \pi(1) = x.$
         \item \addtocounter{enumi}{2}
         \begin{align*}
             \pi'_h  \pi'_{k_{(1)}} \pi'_{S(k_{(2)})}(x) &= \pi'_h \pi'_{k_{(1)}} \Big( \pi(S(k_{(2)})) \cdot x \cdot \pi(S^2(k_{(3)})) \Big) \\
             &= \pi'_h \Big( \pi(k_{(1)}) \pi(S(k_{(3)})) \cdot x \cdot \pi(S^2(k_{(4)})) \pi(S(k_{(2)})) \Big) \\ 
             &= \pi(h_{(1)}) \pi(k_{(1)}) \pi(S(k_{(3)})) \cdot x \cdot \pi(S^2(k_{(4)})) \pi(S(k_{(2)})) \pi(S(h_{(2)})) \\
          (\text{By the cocommutativity})   &= \pi(h_{(1)}k_{(1)}) \pi(S(k_{(3)})) \cdot x \cdot \pi(S^2(k_{(4)})) \pi(S(h_{(2)}k_{(2)})) \\
             &= \pi'_{hk_{(1)}} \pi'_{S(k_{(2)})}(x).
         \end{align*}
         
         \item 
         \begin{align*}
             \pi'_{S(h_{(1)})}\pi'_{h_{(2)}} \pi'_{k}(x) 
             &= \pi(S(h_{(1)}))\pi(h_{(3)})\pi(k_{(1)}) \cdot x \cdot \pi(S(k_{(2)}))\pi(S(h_{(4)}))\pi(S^2(h_{(2)})) \\ 
             (\text{By the cocommutativity}) &= \pi(S(h_{(1)}))\pi(h_{(3)}k_{(1)}) \cdot x \cdot \pi(S(h_{(4)}k_{(2)})) \pi(S^2(h_{(2)})) \\
             &= \pi'_{S(h_{(1)})}\pi'_{h_{(2)}k}(x).
         \end{align*}
     \end{enumerate}
 \end{proof}
 
 
 Let $M$ be an $A \#\mathcal{H}$-bimodule. Then, $M$ is an $A$-bimodule with the actions induced by the map $\phi_0$ from Lemma~\ref{lemma from Alvares-Batista}, i.e., the $A$-bimodule structure  is defined by 
 \begin{equation} \label{e_AbiModSt}
     a \cdot m := (a \#1_{\mathcal{H}}) \cdot m \text{ and } m \cdot a := m \cdot (a \#1_{\mathcal{H}}).
 \end{equation}
 
 Since our partial action is symmetric, by \cite[Example 3.7]{ALVES2015137}, the map $\pi_0: \mathcal{H} \to A \# \mathcal{H}$, given by $h \mapsto 1_A \# h$, is a partial representation of $\mathcal{H} $ into the partial smash product $A \# \mathcal{H}$. Consequently, by Proposition \ref{p_parrep} the map $\pi':\mathcal{H} \to \operatorname{End}_K(M)$, such that
 \[
    \pi'_h(m):=(1_A \# h_{(1)})\cdot m \cdot (1_A \# S(h_{(2)})),
 \]
  is a partial representation of $\mathcal{H}$. Thus, by Proposition~\ref{prop:CorollaryFromAlves-Batista-Vercruyse}, $M$ is an $\mathcal{H}_{par}$-module with action given by
     \begin{equation} \label{e_Hpar-modM}
         [h] \triangleright  m := \pi_h'(m) = (1_A \# h_{(1)})\cdot m \cdot (1_A \# S(h_{(2)})).
     \end{equation}
 
 Notice that given a partial representation $\phi:\mathcal{H} \to \mathcal{R}$ into a unital algebra $\mathcal{R}$, we have that $\mathcal{R}$ becomes a left $\mathcal{H}_{par}$-module  by setting
 \begin{equation}
     [h] \triangleright r := \phi(h)r,
 \end{equation}
 in view of the isomorphism $\mathcal{R} \cong \operatorname{End}_{\mathcal{R}}(\mathcal{R}).$
 
 \begin{proposition} \label{p_AskewGmIsKparGm}
     Let $f:X \to Y$ be a map of $A \# \mathcal{H}$-bimodules. Then, $f$ is a morphism of $\mathcal{H}_{par}$-modules.
 \end{proposition}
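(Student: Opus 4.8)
The plan is to exploit the fact that the $\mathcal{H}_{par}$-module structures on $X$ and $Y$ are, by construction, expressed entirely through the left and right $A\#\mathcal{H}$-actions, so that an $A\#\mathcal{H}$-bimodule map is automatically equivariant. First I would reduce the verification to the generators. Since $\mathcal{H}_{par}$ is a quotient of the tensor algebra $T(\mathcal{H})$, it is generated as a $K$-algebra by the elements $[h]$ with $h\in\mathcal{H}$. Consequently a $K$-linear map between $\mathcal{H}_{par}$-modules is a morphism of $\mathcal{H}_{par}$-modules as soon as it commutes with the action of each generator $[h]$: compatibility with products $[h_{1}]\cdots[h_{n}]$ then follows by induction, and compatibility with arbitrary elements by $K$-linearity. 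Thus it suffices to check that $f([h]\triangleright x)=[h]\triangleright f(x)$ for all $h\in\mathcal{H}$ and $x\in X$.

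Next I would unwind the two $\mathcal{H}_{par}$-actions. By the construction preceding the statement, namely Proposition~\ref{p_parrep} applied to the partial representation $\pi_{0}$ together with \eqref{e_Hpar-modM}, the actions on $X$ and on $Y$ are given by one and the same formula
\[
    [h]\triangleright z=\sum (1_{A}\# h_{(1)})\cdot z\cdot(1_{A}\# S(h_{(2)})),
\]
which is written purely in terms of left and right multiplication by the elements $1_{A}\# h_{(1)}$ and $1_{A}\# S(h_{(2)})$ of $A\#\mathcal{H}$.

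The verification is then a one-line computation. Using the $K$-linearity of $f$ to pass the Sweedler sum through, and then the hypothesis that $f$ is a morphism of $A\#\mathcal{H}$-bimodules to move $f$ past the left and right factors, I obtain
\[
    f([h]\triangleright x)=\sum (1_{A}\# h_{(1)})\cdot f(x)\cdot(1_{A}\# S(h_{(2)}))=[h]\triangleright f(x).
\]
There is essentially no genuine obstacle here: the content of the statement has already been absorbed into the set-up of \eqref{e_Hpar-modM}, which realizes the $\mathcal{H}_{par}$-action through multiplications in $A\#\mathcal{H}$, so that bimodule maps are equivariant by construction. The only point requiring a word of care is the reduction to the generators $[h]$, which is immediate from $\mathcal{H}_{par}$ being generated by their images.
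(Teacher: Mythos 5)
Your proposal is correct and coincides with the paper's own argument: the paper's proof is exactly the one-line computation $f([h]\triangleright x)=f\bigl((1_A\# h_{(1)})\cdot x\cdot(1_A\# S(h_{(2)}))\bigr)=(1_A\# h_{(1)})\cdot f(x)\cdot(1_A\# S(h_{(2)}))=[h]\triangleright f(x)$, using that the $\mathcal{H}_{par}$-action \eqref{e_Hpar-modM} is realized through multiplications in $A\#\mathcal{H}$. Your additional remark on reducing to the generators $[h]$ (left implicit in the paper) is a harmless and correct elaboration, not a different route.
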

 
 \begin{proof}
     By direct computation we obtain
     \[
         f([h] \triangleright x) = f((1_A \# h_{(1)})\cdot x \cdot (1_A \# S(h_{(2)}))) = (1_A \# h_{(1)})\cdot f(x) \cdot (1_A \# S(h_{(2)})) = [h] \triangleright f(x),
     \]
     for all $x \in X$ and $h \in \mathcal{H}$.
 \end{proof}
 
 Recall that since our  partial action $\cdot: \mathcal{H} \otimes A  \to A$ is symmetric, then by \cite[Example 3.5]{ALVES2015137}  the map $h \mapsto \lambda_h \in \operatorname{End}_K(A)$, where $\lambda_h(a):= h \cdot a,$ is a partial representation of $\mathcal{H}$ on $A$. Consequently, $A$ is a $\mathcal{H}_{par}$-module with action
 \begin{equation} \label{e_Hpar-modA}
     [h] \triangleright a := h \cdot a.
 \end{equation}

 For each $h \in c$ define
 \[
     e_h:=[h_{(1)}][S(h_{(2)})], 
 \]
 explicitly we have that
 \[
     e_h = \mu \circ ([\_] \otimes [\_]) \circ (1 \otimes S) \circ \Delta (h), 
 \]
 where $\mu$ denotes the product in $\mathcal{H}_{par}$. Thus, the map $h \mapsto e_h$ is $K$-linear, in particular $e_{\lambda h}= \lambda e_h$ for all $\lambda \in K$.

Notice that for a general Hopf algebra one also needs to consider the elements $ \tilde{e}_h:=[S(h_{(1)})][h_{(2)}],$ $h\in  \mathcal{H},$ (see \cite{ALVES2015137}), however, since  our  Hopf algebra is cocommutative, we have that
 \begin{align*}
     \tilde{e}_h &= [S(h_{(1)})][h_{(2)}] \\ 
                 &= [S(h_{(1)})][S(S(h_{(2)}))] \\ 
                 &= [S(h)_{(1)}][S(S(h)_{(2)})] \\
                 &= e_{S(h)}.
 \end{align*}

 The following is \cite[Lemma 4.7]{ALVES2015137} stated for our particular case of a cocommutative Hopf algebra.

 \begin{lemma} \label{l_Bprop} 
    For every $h,k \in \mathcal{H}$ the following properties hold:
     \begin{enumerate}[(i)]
         \item $e_k[h]=[h_{(2)}]e_{S(h_{(1)})k}$, in particular $e_{h_{(1)}}[h_{(2)}]=[h]$;
         \item $[h]e_k = e_{h_{(1)}k}[h_{(2)}]$, in particular $[h_{(1)}]e_{S(h_{(2)})}=[h]$;
         \item $e_{h_{(1)}}e_{h_{(2)}} = e_{h}$;
				 \item  $e_h e_k = e_k e_h.$
     \end{enumerate}
 \end{lemma}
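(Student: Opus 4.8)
The plan is to work entirely inside $\mathcal{H}_{par}$, using the defining relations (3)--(6) listed above (equivalently, the fact that $[\_]$ is a partial representation), the Hopf algebra axioms $\sum h_{(1)}S(h_{(2)}) = \sum S(h_{(1)})h_{(2)} = \varepsilon(h)1_{\mathcal{H}}$, the counit identity, and two features special to the cocommutative case: that the antipode is involutive, $S^2 = \mathrm{id}$, and that the iterated coproduct is invariant under any permutation of its legs. These last two facts are precisely what allow the asymmetric relations (3)--(6) to be matched up on the two sides of each identity.

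I would begin with the main part of (ii). Expanding the left-hand side and applying relation (3) with left factor $[h]$ gives $[h]e_k = [h][k_{(1)}][S(k_{(2)})] = [hk_{(1)}][S(k_{(2)})]$. For the right-hand side I would expand $e_{h_{(1)}k} = [h_{(1)}k_{(1)}][S(h_{(2)}k_{(2)})]$ using $\Delta(h_{(1)}k) = h_{(1)}k_{(1)}\otimes h_{(2)}k_{(2)}$, so that (with a triple coproduct of $h$) the right-hand side becomes $[h_{(1)}k_{(1)}][S(h_{(2)}k_{(2)})][h_{(3)}]$; relation (4) then merges the last two factors into $[h_{(1)}k_{(1)}][S(h_{(2)}k_{(2)})h_{(3)}]$, and contracting $\sum S(h_{(2)})h_{(3)}$ through the antipode axiom collapses this to $[hk_{(1)}][S(k_{(2)})]$, matching the left-hand side. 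The main part of (i) can be obtained by the symmetric computation, or more quickly by applying the anti-automorphism $\theta$ of $\mathcal{H}_{par}$ determined by $\theta([h]) = [S(h)]$: one checks that $\theta$ carries each of the relations (3)--(6) to another of them (this is exactly where cocommutativity and $S^2=\mathrm{id}$ are used) and that $\theta(e_h) = e_h$, so applying $\theta$ to (ii) and substituting $h \mapsto S(h)$ yields (i).

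Once both main identities are in hand, the two ``in particular'' statements drop out by specialisation: putting $k = 1_{\mathcal{H}}$ in (ii) and using $e_{1_{\mathcal{H}}} = 1$ gives $[h] = e_{h_{(1)}}[h_{(2)}]$, while $k = 1_{\mathcal{H}}$ in (i) gives $[h] = [h_{(2)}]e_{S(h_{(1)})} = [h_{(1)}]e_{S(h_{(2)})}$ after a cocommutative relabelling. For (iii) I would expand $e_{h_{(1)}}e_{h_{(2)}} = [h_{(1)}][S(h_{(2)})][h_{(3)}][S(h_{(4)})]$, apply relation (6) to the last three factors to get $[h_{(1)}][S(h_{(2)})][h_{(3)}S(h_{(4)})]$, and then contract $\sum h_{(3)}S(h_{(4)})$ to $1$ via the antipode, leaving $[h_{(1)}][S(h_{(2)})] = e_h$.

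Finally, for (iv) I would reduce both products to a common normal form. Using (i) to move $[h]$-factors past $e_k$ and (ii) to move them the other way, both $e_he_k$ and $e_ke_h$ can be rewritten, via a triple coproduct of $h$, as sums of the shape $\sum [\,\cdot\,]\,e_{S(\cdot)k}\,[S(\cdot)]$ differing only by a permutation of the three Sweedler legs of $h$; cocommutativity, which makes the iterated coproduct invariant under the full symmetric group on its legs, then identifies the two sums and gives $e_he_k = e_ke_h$. I expect the main obstacle throughout to be the bookkeeping: the relations (3)--(6) are genuinely asymmetric, so each step requires carefully permuting and regrouping the Sweedler components (legitimately, since cocommutativity lets one apply a relation inside a Sweedler sum even when its arguments are correlated) and invoking $S^2 = \mathrm{id}$ at the right moments to align the two sides.
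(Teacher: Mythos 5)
Your proposal is correct, but it is worth noting that the paper contains no proof of this lemma at all: the statement is quoted, specialized to the cocommutative case, from \cite[Lemma 4.7]{ALVES2015137}, where it is proved for an arbitrary Hopf algebra (there the argument must juggle both families $e_h$ and $\tilde{e}_h$). So your argument is genuinely independent content, and it checks out. For (ii), relation (3) reduces the left side to $[hk_{(1)}][S(k_{(2)})]$, while relation (4) applied to $e_{h_{(1)}k}[h_{(2)}]$ together with $\sum S(h_{(2)})h_{(3)}=\varepsilon(h_{(2)})1_{\mathcal{H}}$ reduces the right side to the same element; (iii) collapses via relation (6) and the antipode axiom exactly as you say; and your normal-form argument for (iv) — both products equal $\sum [h_{(1)}]\,e_{S(h_{(2)})k}\,[S(h_{(3)})]$ after an $S_3$-permutation of the legs of $\Delta^2(h)$ — is clean and is precisely where cocommutativity does real work. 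Your anti-automorphism $\theta$ is the map $\mathcal{S}$ that the paper itself constructs in \eqref{e_Spar}; since that construction uses only the universal property of $\mathcal{H}_{par}$ and the fact that $h\mapsto[S(h)]$ is a partial representation into $\mathcal{H}_{par}^{op}$ (and the paper verifies $\mathcal{S}(e_h)=e_h$ in the proof of Lemma~\ref{l_BprojectiveResolution}), there is no circularity in invoking it to deduce (i) from (ii). What your route buys over citing \cite{ALVES2015137} is a short self-contained verification in which $S^2=\mathrm{id}$ and the symmetry of the iterated coproduct visibly halve the bookkeeping.

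Two small corrections of emphasis. First, the legitimacy of applying relations (3)--(6) inside Sweedler sums with correlated arguments has nothing to do with cocommutativity, contrary to your parenthetical: the ideal $I$ is linearly spanned by the generators as $(h,k)$ ranges over $\mathcal{H}\times\mathcal{H}$, so any finite sum of generators with correlated entries (e.g.\ taking ``$h$'' $=h_{(1)}k$ and ``$k$'' $=h_{(2)}$ inside $\sum_{(h)}$) still lies in $I$; cocommutativity is needed only where you permute legs, use $S^2=\mathrm{id}$, or use $\Delta\circ S=(S\otimes S)\circ\Delta$. Second, your two specializations at $k=1_{\mathcal{H}}$ are swapped relative to the lemma's pairing: $k=1_{\mathcal{H}}$ in (ii) yields $e_{h_{(1)}}[h_{(2)}]=[h]$, which is the ``in particular'' displayed in item (i), and $k=1_{\mathcal{H}}$ in (i) yields the one in item (ii). Since you derive both identities, this is purely cosmetic.
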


 \begin{definition}
     We define $\mathcal{B}$ as the subalgebra of $\mathcal{H}_{par}$ generated by
     $\{ e_h \mid h \in \mathcal{H}\}.$ 
      \end{definition}

 The following lemma is a consequence of \cite[Theorem 4.8]{ALVES2015137} and Proposition~\ref{prop:CorollaryFromAlves-Batista-Vercruyse}.
 
 \begin{lemma} \label{l_HparBstructure}
     The algebra $\mathcal{B}$ is a left $\mathcal{H}_{par}$-module with the action
     \begin{equation} \label{e_LeftActionB}
         [h] \triangleright b:= [h_{(1)}]b[S(h_{(2)})].
     \end{equation}
 \end{lemma}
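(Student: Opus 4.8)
The plan is to verify directly that the formula \eqref{e_LeftActionB} defines a left $\mathcal{H}_{par}$-module structure on $\mathcal{B}$, i.e.\ that the action is well-defined (lands in $\mathcal{B}$), is compatible with the algebra structure of $\mathcal{H}_{par}$ (unitality and associativity of the action), and is $K$-bilinear. Since the stated hypothesis is that this is a \emph{consequence} of \cite[Theorem 4.8]{ALVES2015137} and Proposition~\ref{prop:CorollaryFromAlves-Batista-Vercruyse}, I would first recall that \cite[Theorem 4.8]{ALVES2015137} exhibits $\mathcal{B}$ as a partial $\mathcal{H}$-module algebra (the algebra on which $\mathcal{H}$ acts partially so that $\mathcal{H}_{par}\cong \mathcal{B}\#\mathcal{H}$), with the underlying partial action being exactly $h\cdot b = [h_{(1)}]\,b\,[S(h_{(2)})]$. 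By Proposition~\ref{prop:CorollaryFromAlves-Batista-Vercruyse}, a partial $\mathcal{H}$-module is the same as an $\mathcal{H}_{par}$-module with $[h]\triangleright b := \pi_h(b)$, so the content reduces to checking that $b\mapsto [h_{(1)}]\,b\,[S(h_{(2)})]$ is a partial representation of $\mathcal{H}$ into $\operatorname{End}_K(\mathcal{B})$.

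The cleanest route is to invoke Proposition~\ref{p_parrep}: take $R=\mathcal{H}_{par}$, let $\pi=[\_]\colon\mathcal{H}\to\mathcal{H}_{par}$ be the canonical partial representation, and let $X=\mathcal{H}_{par}$ regarded as a bimodule over itself. Proposition~\ref{p_parrep} then immediately yields that $\pi'_h(x)=\sum [h_{(1)}]\,x\,[S(h_{(2)})]$ is a partial representation of $\mathcal{H}$ on $\operatorname{End}_K(\mathcal{H}_{par})$. Restricting the ambient module from $\mathcal{H}_{par}$ to the subalgebra $\mathcal{B}$ gives the claimed formula, so the two things left to confirm are: first, that $\mathcal{B}$ is preserved by each $\pi'_h$, i.e.\ $[h_{(1)}]\,b\,[S(h_{(2)})]\in\mathcal{B}$ whenever $b\in\mathcal{B}$; and second, that the partial representation property survives restriction to the stable submodule $\mathcal{B}$, which is automatic once stability is established.

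The stability of $\mathcal{B}$ is the main point to pin down, and it should follow from Lemma~\ref{l_Bprop}. For a generator $e_k\in\mathcal{B}$ I would compute $[h_{(1)}]\,e_k\,[S(h_{(2)})]$ by pushing the $[h_{(1)}]$ past $e_k$ using part~(ii), $[h]e_k=e_{h_{(1)}k}[h_{(2)}]$, which converts the expression into a sum of the form $e_{(\cdot)}\,[h_{(2)}]\,[S(h_{(3)})]=e_{(\cdot)}\,e_{h_{(2)}}$ after recognizing $[h_{(2)}][S(h_{(3)})]=e_{h_{(2)}}$; since products of the $e$'s lie in $\mathcal{B}$ by definition and commute by part~(iv), one lands back inside $\mathcal{B}$. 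For a general $b\in\mathcal{B}$, written as a product of generators $e_{k_1}\cdots e_{k_n}$, I would argue that $\pi'_h$ applied to such a product decomposes multiplicatively enough to stay in $\mathcal{B}$; concretely, because $[h]$ is a partial representation the conjugation $b\mapsto[h_{(1)}]b[S(h_{(2)})]$ interacts with products through the coproduct, and iterating the single-generator computation (tracking the Sweedler indices carefully via cocommutativity) keeps everything in the subalgebra generated by the $e$'s.

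The hard part will be the bookkeeping in establishing stability of $\mathcal{B}$ under the conjugation action, namely controlling $[h_{(1)}]\,(e_{k_1}\cdots e_{k_n})\,[S(h_{(2)})]$ for products of generators; the relations of Lemma~\ref{l_Bprop} must be applied repeatedly and in the right order, and cocommutativity is what allows the Sweedler components to be matched up so that the intermediate $[\_]$ factors collapse into $e$'s. Everything else—the partial-representation axioms, $K$-linearity, and unitality—is delivered for free by Proposition~\ref{p_parrep}, so I expect the proof to consist essentially of citing that proposition for the representation property and then verifying closure of $\mathcal{B}$ by a short induction on the length of a product of generators, using (ii), (iii) and (iv) of Lemma~\ref{l_Bprop}.
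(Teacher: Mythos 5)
Your proposal is correct, but it is genuinely more self-contained than what the paper does: the paper's entire proof is the citation you mention in your first paragraph, namely that \cite[Theorem 4.8]{ALVES2015137} already establishes the (stronger) fact that $h\cdot b=[h_{(1)}]b[S(h_{(2)})]$ is a symmetric partial action making $\mathcal{B}$ a partial $\mathcal{H}$-module algebra with $\mathcal{H}_{par}\cong\mathcal{B}\#\mathcal{H}$, after which Proposition~\ref{prop:CorollaryFromAlves-Batista-Vercruyse} converts the partial action into the left $\mathcal{H}_{par}$-module structure. Your alternative route --- apply Proposition~\ref{p_parrep} with $R=\mathcal{H}_{par}$, $\pi=[\_]$ and $X=\mathcal{H}_{par}$ as the regular bimodule, then restrict to the $\pi'$-stable subalgebra $\mathcal{B}$ --- is valid: cocommutativity is a standing hypothesis of Section~\ref{sec:homology}, so Proposition~\ref{p_parrep} applies, and since the axioms (PR1)--(PR5) are identities between compositions of operators, they indeed pass automatically to any common invariant subspace. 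Moreover, the stability step you flag as the ``hard part'' is easier than you fear: a single left-to-right sweep with Lemma~\ref{l_Bprop}(ii) gives
\[
  [h_{(1)}]\,e_{k_1}\cdots e_{k_n}\,[S(h_{(2)})]
  \;=\; e_{h_{(1)}k_1}\,e_{h_{(2)}k_2}\cdots e_{h_{(n)}k_n}\,[h_{(n+1)}][S(h_{(n+2)})]
  \;=\; e_{h_{(1)}k_1}\cdots e_{h_{(n)}k_n}\,e_{h_{(n+1)}}\in\mathcal{B},
\]
so no induction gymnastics and no use of the commutativity relation (iv) are needed; $K$-linearity then extends stability to all of $\mathcal{B}$. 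What each approach buys: yours stays entirely inside the paper's toolkit and proves exactly the lemma as stated, while the paper's citation is shorter and simultaneously records the stronger partial-module-algebra structure and the isomorphism $\mathcal{H}_{par}\cong\mathcal{B}\#\mathcal{H}$, which the paper needs later anyway (e.g.\ in Lemma~\ref{l_HparIsBprojective}).
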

 
 Since $\mathcal{H}$ is cocommutative then we can consider the antipode $S$ as an isomorphism between $\mathcal{H}$ and $\mathcal{H}^{op}.$ Furthermore,  it determines a partial representation
 \begin{align*}
     \mathcal{S}': \mathcal{H} & \to \mathcal{H}_{par}^{op}, \\
                     h &\mapsto [S(h)],
 \end{align*}
 and thus there  exists a morphism of algebras 
 \begin{equation} \label{e_Spar}
     \mathcal{S}: \mathcal{H}_{par} \to \mathcal{H}_{par}^{op},
 \end{equation}
 such that $\mathcal{S}([h])=[S(h)]$. Since, $S: \mathcal{H} \to \mathcal{H}^{op}$ is an algebra isomorphism, then we have an algebra isomorphism $\mathcal{H}_{par} \overset{\mathcal{S}}{\cong} \mathcal{H}_{par}^{op}$. Consequently, we obtain the following lemma.
 
 \begin{lemma} \label{l_HparMODHpar}
     The categories $\mathcal{H}_{par}$-\textbf{Mod} and \textbf{Mod}-$\mathcal{H}_{par}$ are isomorphic. In particular, given a left $\mathcal{H}_{par}$-module $V$, then $V$ is a right $\mathcal{H}_{par}$-module with action 
     \[
         x \triangleleft [h] := [S(h)] \triangleright x.
     \]
     Analogously, if $V$ is a right $\mathcal{H}_{par}$-module, then $V$ is a left $\mathcal{H}_{par}$-module with action 
     \[
         [h] \triangleright x := x \triangleleft [S(h)].
     \]
 \end{lemma}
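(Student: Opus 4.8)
The plan is to deduce the claimed isomorphism directly from the algebra isomorphism $\mathcal{S}\colon \mathcal{H}_{par}\to\mathcal{H}_{par}^{op}$ recorded in \eqref{e_Spar}, by combining it with the tautological identification of left $\mathcal{H}_{par}^{op}$-modules with right $\mathcal{H}_{par}$-modules, and then to read off the explicit actions on the generators $[h]$ of $\mathcal{H}_{par}$.

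First I would isolate the two standard functorial facts I intend to use. The first is purely formal: for any $K$-algebra $R$, a left $R^{op}$-module is exactly the same datum as a right $R$-module, via $r\cdot_{op} v = v\triangleleft r$, and this identification is the identity on underlying $K$-linear maps; thus $R^{op}$-\textbf{Mod} and \textbf{Mod}-$R$ coincide as categories. The second is restriction of scalars along an algebra isomorphism: if $\theta\colon R\to R'$ is an algebra isomorphism, then sending a left $R'$-module $V$ with action $\triangleright'$ to the left $R$-module with action $r\triangleright v:=\theta(r)\triangleright' v$ defines an isomorphism of categories from $R'$-\textbf{Mod} to $R$-\textbf{Mod}, whose inverse is restriction along $\theta^{-1}$; in particular it is the identity on morphisms.

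Next I would apply these with $R=\mathcal{H}_{par}$, $R'=\mathcal{H}_{par}^{op}$ and $\theta=\mathcal{S}$. Since $\mathcal{S}$ is an algebra isomorphism (because $S\colon\mathcal{H}\to\mathcal{H}^{op}$ is one), restriction along $\mathcal{S}$ gives an isomorphism of categories from $\mathcal{H}_{par}^{op}$-\textbf{Mod} to $\mathcal{H}_{par}$-\textbf{Mod}; composing with the identification of \textbf{Mod}-$\mathcal{H}_{par}$ with $\mathcal{H}_{par}^{op}$-\textbf{Mod} yields the desired isomorphism between \textbf{Mod}-$\mathcal{H}_{par}$ and $\mathcal{H}_{par}$-\textbf{Mod}. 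Tracing a right $\mathcal{H}_{par}$-module $V$ through this composite produces the left action $[h]\triangleright x=\mathcal{S}([h])\cdot_{op}x=x\triangleleft[S(h)]$, which is the second displayed formula, while tracing a left module through the inverse functor (restriction along $\mathcal{S}^{-1}$) produces $x\triangleleft[h]=\mathcal{S}^{-1}([h])\triangleright x$.

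The only delicate point, which I would single out as the crux, is to identify $\mathcal{S}^{-1}$ on generators so that the first displayed formula reads $x\triangleleft[h]=[S(h)]\triangleright x$. Here I would invoke cocommutativity of $\mathcal{H}$, which gives $S^2=\mathrm{id}_{\mathcal{H}}$; hence $\mathcal{S}^2([h])=[S^2(h)]=[h]$, so $\mathcal{S}$ is an involution and $\mathcal{S}^{-1}=\mathcal{S}$, whence $\mathcal{S}^{-1}([h])=[S(h)]$. The same fact shows the two constructions are mutually inverse, since applying them in succession sends a left action to $[h]\triangleright x\mapsto x\triangleleft[S(h)]\mapsto [S^2(h)]\triangleright x=[h]\triangleright x$. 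As both functors are restriction of scalars, they act as the identity on morphisms and on underlying modules, so functoriality and the module axioms require no computation beyond the generator-level bookkeeping above.
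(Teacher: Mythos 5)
Your proposal is correct and follows essentially the same route as the paper: the paper's (one-line) proof likewise combines the tautological identification of left $B$-modules with right $B^{op}$-modules with the algebra isomorphism $\mathcal{S}\colon \mathcal{H}_{par}\cong\mathcal{H}_{par}^{op}$ from \eqref{e_Spar}. Your extra step verifying $\mathcal{S}^{-1}=\mathcal{S}$ via $S^2=\mathrm{id}_{\mathcal{H}}$ (valid by cocommutativity) is a worthwhile detail the paper leaves implicit when reading off the formula $x\triangleleft[h]=[S(h)]\triangleright x$, and it matches how the paper itself uses $S^2=\mathrm{id}$ just afterwards in deriving \eqref{e_RightActionB}.
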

 \begin{proof}
     Obviously,   $B$-\textbf{Mod} $\cong$ \textbf{Mod}-$B^{op}$ for any algebra $B,$ and since $\mathcal{H}_{par} \overset{\mathcal{S}}{\cong} \mathcal{H}_{par}^{op}$, then \textbf{Mod}-$\mathcal{H}_{par}^{op}\cong$ \textbf{Mod}-$\mathcal{H}_{par}$.
 \end{proof}

 In particular, by Lemmas~\ref{l_HparBstructure} and \ref{l_HparMODHpar} we conclude that $\mathcal{B}$ is a right 
 $\mathcal{H}_{par}$-module with action
 \begin{equation} \label{e_RightActionB}
    b \triangleleft [h] := [S(h_{(1)})]b[h_{(2)}].
 \end{equation} 
 Indeed, 
 \begin{equation*}
     b \triangleleft [h] := [S(h)] \triangleright b 
      =[S(h)_{(1)}]b[S(S(h)_{(2)})] = 
      [S(h_{(2)})]b[S^2(h_{(1)})] =[S(h_{(1)})]b[h_{(2)}],
 \end{equation*}
 thanks to the cocommutativity of $\mathcal{H}.$
 
 \begin{lemma} \label{l_BactionB}
     For any $w, u \in \mathcal{B}$ we have that $w \triangleright u = wu$.
 \end{lemma}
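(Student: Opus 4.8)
The plan is to reduce the statement to the generators of $\mathcal{B}$ and then to a direct computation with the action \eqref{e_LeftActionB}. Since $\triangleright$ is $K$-bilinear and satisfies the left-module identity $(w_1 w_2)\triangleright u = w_1\triangleright(w_2\triangleright u)$, and since $\mathcal{B}$ is generated as a $K$-algebra by $\{e_h\mid h\in\mathcal{H}\}$, it suffices to prove that $e_h\triangleright u = e_h u$ for every $h\in\mathcal{H}$ and every $u\in\mathcal{B}$. Indeed, once this is known for \emph{all} $u$, writing an arbitrary $w\in\mathcal{B}$ as a $K$-linear combination of products $e_{h_1}\cdots e_{h_n}$ and peeling off one factor at a time (each partial result $e_{h_j}\cdots e_{h_n}u$ again lying in $\mathcal{B}$) gives $w\triangleright u = wu$ by linearity in $w$.

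For the generator case I would expand, using the module identity together with $e_h=[h_{(1)}][S(h_{(2)})]$, to get $e_h\triangleright u = [h_{(1)}]\triangleright\big([S(h_{(2)})]\triangleright u\big)$, and then apply the defining action \eqref{e_LeftActionB} twice. Computing the coproduct of the antipode via cocommutativity (so that $\Delta\circ S=(S\otimes S)\circ\Delta$) turns this into the ``sandwich'' $e_h\triangleright u = [h_{(1)}][S(h_{(3)})]\,u\,[S^2(h_{(4)})][S(h_{(2)})]$, where the iterated coproduct of $h$ is written in Sweedler notation.

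It then remains to collapse this expression. Using that $S^2=\mathrm{id}$ (valid since $\mathcal{H}$ is cocommutative, as already exploited in the identity $\tilde e_h=e_{S(h)}$ above) and reordering the Sweedler components freely by cocommutativity, I would bring the sandwich to the form $[h_{(1)}][S(h_{(2)})]\,u\,[h_{(3)}][S(h_{(4)})]$. Recognizing $[h_{(3)}][S(h_{(4)})]=e_{h_{(3)}}\in\mathcal{B}$, sliding it past $u$ by the commutativity of $\mathcal{B}$ (Lemma~\ref{l_Bprop}(iv)), and finally using $[h_{(1)}][S(h_{(2)})]e_{h_{(3)}}=e_{h_{(1)}}e_{h_{(2)}}=e_h$ (Lemma~\ref{l_Bprop}(iii)), I obtain $e_h\triangleright u=e_h u$, as desired. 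The hard part is purely bookkeeping: extracting $u$ from the middle of the sandwich. This is precisely where the three structural inputs combine---cocommutativity to permute the Sweedler legs, the involutivity $S^2=\mathrm{id}$ to remove the double antipode, and the commutativity of $\mathcal{B}$ to move the resulting idempotent-like factor $e_{h_{(3)}}$ across $u$.
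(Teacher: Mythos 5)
Your proof is correct and takes essentially the same approach as the paper's: both reduce to the generators $e_h$, expand $e_h\triangleright u=[h_{(1)}]\triangleright\bigl([S(h_{(2)})]\triangleright u\bigr)$ into a sandwich via \eqref{e_LeftActionB}, and collapse it using cocommutativity, $S^2=\mathrm{id}$, the commutativity of the $e$'s (Lemma~\ref{l_Bprop}(iv)) and $e_{h_{(1)}}e_{h_{(2)}}=e_h$ (Lemma~\ref{l_Bprop}(iii)). The only differences are cosmetic: the paper absorbs $S^2=\mathrm{id}$ already into its second line, writing $[S(h_{(2)})]\triangleright u=[S(h_{(2)})]\,u\,[h_{(3)}]$ directly, whereas you carry the explicit $S^2$ factor one step longer and spell out the peeling-off induction over products of generators that the paper dismisses as immediate.
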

 \begin{proof}
     Let $h \in \mathcal{H}$. Then, keeping in mind the cocommutativity of  $\mathcal{H}$ and Lemma~\ref{l_Bprop},  we have
     \begin{align*}
         e_h \triangleright u
         &= \sum [h_{(1)}][S(h_{(2)})] \triangleright  u \\ 
         &= \sum [h_{(1)}] \triangleright ([S(h_{(2)})]u[h_{(3)}]) \\ 
         &= \sum [h_{(1)}][S(h_{(3)})]u[h_{(4)}][S(h_{(2)})] \\ 
         &= \sum [h_{(1)}][S(h_{(2)})]u[h_{(3)}][S(h_{(4)})] \\ 
         &= \sum e_{h{(1)}}ue_{h{(2)}} \\ 
         &= \sum e_{h{(1)}}e_{h{(2)}}u \\ 
         &= e_hu.
     \end{align*} 
     The statement with an arbitrary $w \in \mathcal{B}$ follows immediately. 
    \end{proof}
 
 \begin{proposition} \label{p_AeMAction}
    Let $M$ be an $A \#\mathcal{H}$-bimodule. Then the map $\pi: \mathcal{H} \to \operatorname{End}_K(A \otimes_{A^e}M)$ such that
     \begin{equation} \label{e_AeMKpGM}
         \pi_h(a \otimes_{A^e}  m):=[h_{(1)}] \triangleright a \otimes_{A^e} [h_{(2)}] \triangleright m,
     \end{equation}
     is a partial representation on $\mathcal{H}$. In particular $A \otimes_{A^e}M$ is a $\mathcal{H}_{par}$-module with the action
     \begin{equation}
         [h] \triangleright (a \otimes_{A^e}  m):=[h_{(1)}] \triangleright a \otimes_{A^e} [h_{(2)}] \triangleright m.
     \end{equation}
 
 \end{proposition}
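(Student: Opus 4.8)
The plan is to exhibit $\pi$ as the ``diagonal'' combination, through the coproduct of $\mathcal{H}$, of the two partial representations already at our disposal: the representation $\rho_h := [h]\triangleright(-)$ of $\mathcal{H}$ on $A$ coming from \eqref{e_Hpar-modA}, and the representation $\sigma_h := [h]\triangleright(-)$ of $\mathcal{H}$ on $M$ coming from \eqref{e_Hpar-modM}. With this notation the defining formula reads $\pi_h = \sum \rho_{h_{(1)}}\otimes_{A^e}\sigma_{h_{(2)}}$, so the statement splits into two tasks: first, that each $\pi_h$ is a well-defined $K$-endomorphism of $A\otimes_{A^e}M$, and second, that $\pi$ satisfies the partial representation axioms. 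Since $\mathcal{H}$ is cocommutative, it suffices to check (PR1), (PR2) and (PR5), as (PR3) and (PR4) are then consequences of the remaining axioms.

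For well-definedness I would first define $\tilde\pi_h$ by the same formula on $A\otimes_K M$ and then show it factors through $A\otimes_{A^e}M$, that is, that it respects the balancing relation $yax\otimes_{A^e}m = a\otimes_{A^e}xmy$ for $x,y\in A$. Applying $\tilde\pi_h$ to the left-hand side and expanding $h_{(1)}\cdot(yax)$ by (PA2), I would use Lemma~\ref{l_envl} to transport the outer factors $h\cdot y$ and $h\cdot x$ across $\otimes_{A^e}$ onto the $M$-side; there they recombine with $\sigma$ through the smash-product identities of Lemma~\ref{lemma from Alvares-Batista} and, crucially, through Lemma~\ref{l_commutSmash}, which is exactly what makes the antipode terms in the $M$-action cooperate. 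Cocommutativity then aligns the Sweedler indices and the expression collapses to $\tilde\pi_h$ applied to the right-hand side. I expect this to be the main obstacle, since it is here that the two $\mathcal{H}_{par}$-actions, the enveloping-algebra balancing, the antipode, and cocommutativity must all be reconciled simultaneously.

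Granting well-definedness, (PR1) is immediate: $\Delta(1_{\mathcal H})=1_{\mathcal H}\otimes 1_{\mathcal H}$ and $[1_{\mathcal H}]$ acts as the identity on both $A$ and $M$, so $\pi_{1_{\mathcal H}}=\mathrm{id}$. For (PR2) and (PR5) I would proceed exactly as in the proof of Proposition~\ref{p_parrep}. Composing the relevant operators and using coassociativity to fully expand the iterated coproducts, one lands on expressions in which the $A$-tensorand carries some of the Sweedler legs of $k$ (resp.\ $h$) while the $M$-tensorand carries the others. The point is that, by cocommutativity, these legs may be permuted freely, so that the pair $\rho_{k_{(1)}}\rho_{S(k_{(2)})}$ (resp.\ $\rho_{S(h_{(1)})}\rho_{h_{(2)}}$) becomes consecutive on the $A$-side, and likewise for $\sigma$ on the $M$-side. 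One may then apply (PR2) (resp.\ (PR5)) for $\rho$ and for $\sigma$ separately --- these hold because $A$ and $M$ are genuine partial $\mathcal{H}$-modules --- and reassemble the two tensorands to obtain the required identity. In short, the axioms for $\pi$ reduce, via coassociativity and cocommutativity, to the axioms already known for $\rho$ and $\sigma$.
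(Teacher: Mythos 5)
Your proposal is correct and follows essentially the same route as the paper: the same well-definedness computation (expanding $h_{(1)}\cdot(bac)$ via (PA2), transporting factors across $\otimes_{A^e}$ with Lemma~\ref{l_envl}, recombining via Lemma~\ref{lemma from Alvares-Batista} and Lemma~\ref{l_commutSmash}, and realigning Sweedler legs by cocommutativity), followed by the observation that the partial representation axioms for $\pi$ are inherited from the $\mathcal{H}_{par}$-module structures on $A$ and $M$. Your third paragraph merely makes explicit, in the style of Proposition~\ref{p_parrep}, the leg-permutation argument that the paper compresses into its final sentence, and your restriction to (PR1), (PR2), (PR5) is justified by the paper's remark on cocommutative $\mathcal{H}$.
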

 \begin{proof}
     First, we have to verify that the map
 \begin{align*}
     \pi_h: A \otimes_{A^e} M  &\to A \otimes_{A^e} M \\ 
         a \otimes_{A^e}m &\mapsto [h_{(1)}] \triangleright a \otimes_{A^e} [h_{(2)}] \triangleright m
 \end{align*}
     is well-defined. Indeed, for all $b,c \in A$ we obtain using Lemma \ref{l_envl} and the cocommutativity of $\mathcal{H}$ that
     \begin{align*}
         [h_{(1)}] \triangleright &( a \cdot (c \otimes b)) \otimes_{A^e} [h_{(2)}] \triangleright m \\
         &= [h_{(1)}] \triangleright (b a c) \otimes_{A^e} [h_{(2)}] \triangleright m \\ 
         &= h_{(1)} \cdot (b a c) \otimes_{A^e} [h_{(2)}] \triangleright m \\ 
         & = (h_{(1)} \cdot b )(h_{(2)} \cdot a) (h_{(3)} \cdot c) \otimes_{A^e} [h_{(4)}] \triangleright m \\ 
         & = (h_{(4)} \cdot b )(h_{(1)} \cdot a) (h_{(2)} \cdot c) \otimes_{A^e} [h_{(3)}] \triangleright m \\ 
         & = (h_{(1)} \cdot a) \otimes_{A^e} (h_{(2)} \cdot c )([h_{(3)}] \triangleright m)(h_{(4)} \cdot b) \\
         & = (h_{(1)} \cdot a) \otimes_{A^e} ((h_{(2)} \cdot c) \# 1_{\mathcal{H}})( 1_A \# h_{(3)}) \cdot  m \cdot  (1_A \# S(h_{(4)}))((h_{(5)} \cdot b) \# 1_{\mathcal{H}}) \\
        (\text{by Lemma \ref{l_commutSmash}}) & = (h_{(1)} \cdot a) \otimes_{A^e} ((h_{(2)} \cdot c) \# h_{(3)}) \cdot m \cdot ( b \# 1_{\mathcal{H}})(1_A \# S(h_{(4)})) \\
        & = (h_{(1)} \cdot a) \otimes_{A^e} (1_A \# h_{(2)})(c \# 1_{\mathcal{H}}) \cdot m \cdot ( b \# 1_{\mathcal{H}})(1_A \# S(h_{(3)})) \\
        & = (h_{(1)} \cdot a) \otimes_{A^e} (1_A \# h_{(2)})(c \cdot m \cdot b )(1_A \# S(h_{(3)})) \\ 
        & = ([h_{(1)}] \triangleright a) \otimes_{A^e} [h_{(2)}] \triangleright ((c\otimes b) \cdot m ).
     \end{align*}
     Now observe that
     \[
         \pi_h (a \otimes_{A^e}m)= \otimes_{A^e} \circ (- \triangleright a \otimes - \triangleright m) \circ [\_] \otimes [\_] \circ \Delta(h),
     \]
     where $(- \triangleright a \otimes - \triangleright m )(x \otimes y):= x\triangleright a \otimes y \triangleright m$, for all $a \in A$, $m \in M$ and $x,y \in \mathcal{H}_{par}$. Therefore, $\pi$ is a $K$-linear map. Furthermore, since we defined $\pi$ using the $\mathcal{H}_{par}$-module (partial representation of $\mathcal{H}$) structures of $A$ and $M$ we have that $\pi$ is a partial representation of $\mathcal{H}$.
 \end{proof}
 
 \begin{remark}
     Let $f:X \to Y$ be a map of $A \# \mathcal{H} $-bimodules. Then, by Proposition~\ref{p_AskewGmIsKparGm}, $f$ is  a map of $\mathcal{H}_{par}$-modules, and, therefore, $1_A \otimes_{A^e} f : A \otimes_{A^e}X \to A \otimes_{A^e} Y$ is a map of $\mathcal{H}_{par}$-modules.
 \end{remark}
 
 \underline{In what follows} $M$ will be an $A \#\mathcal{H}$-bimodule, and we shall only consider the left $\mathcal{H}_{par}$-module structure on $A \otimes_{A^e} M$ defined by (\ref{e_AeMKpGM}).
 
 By Proposition \ref{p_AeMAction} we can define the (covariant) right exact functor 
 \[
   F_1(-):= A \otimes_{A^e}-: (A \# \mathcal{H})^e\!\operatorname{-Mod}  \to \mathcal{H}_{par}\!\operatorname{-Mod},    
 \]
 and by \eqref{e_RightActionB}  the right exact functor
 \[
     F_2(-):= \mathcal{B} \otimes_{\mathcal{H}_{par}}-:= \mathcal{H}_{par}\!\operatorname{-Mod} \to K \!\operatorname{-Mod}. 
 \]
 Recall that the Hochschild homology of $A \# \mathcal{H}$ with coefficients in $M$ is the left derived functor of 
 \[
     F(-):=A\# \mathcal{H} \otimes_{(A\# \mathcal{H})^e}-:(A \# \mathcal{H})^e\!\operatorname{-Mod} \to K\!\operatorname{-Mod}.
 \]
 
 The following is Proposition $1.4$ of \cite{DeMeyerSeparableAlgebras}.
 
 \begin{proposition} \label{p_DMSA}
     Let $R$ and $S$ be rings and $f: R \to S$ a homomorphism of rings such that $S$ is a projective $R$-module. Then, any projective $S$-module is a projective $R$-module.
 \end{proposition}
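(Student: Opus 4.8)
The plan is to reduce the statement to the two basic stability properties of projective modules: a direct summand of a projective module is projective, and an arbitrary direct sum of projective modules is projective. The homomorphism $f\colon R \to S$ turns every left $S$-module into a left $R$-module by restriction of scalars, via $r \cdot m := f(r)\,m$, and this restriction functor is additive and commutes with arbitrary direct sums; these are the only structural facts I will need.

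First I would let $P$ be a projective left $S$-module. By the standard characterization of projectives, there is a left $S$-module $Q$ and an index set $I$ so that, as left $S$-modules, $P \oplus Q \cong S^{(I)}$, where $S^{(I)} = \bigoplus_{i \in I} S$ denotes the free $S$-module on $I$. Next I would apply restriction of scalars along $f$ to this decomposition. Since restriction is additive and preserves direct sums, the isomorphism $P \oplus Q \cong S^{(I)}$ persists as an isomorphism of left $R$-modules, the right-hand side now being a direct sum of copies of $S$ viewed as an $R$-module through $f$.

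Then I would invoke the hypothesis that $S$ is projective over $R$. As a direct sum of projective $R$-modules is again projective, the module $\bigoplus_{i \in I} S$ is projective over $R$. Finally, $P$ is a direct summand of this $R$-projective module, and since a direct summand of a projective module is projective, it follows that $P$ is projective as an $R$-module, which is exactly the assertion.

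I do not expect any genuine obstacle here: the argument is entirely formal. The only point requiring a moment of care is that the $S$-module splitting $P \oplus Q \cong S^{(I)}$ is preserved under restriction of scalars, but this is immediate from the additivity of the restriction functor, so no further computation is needed.
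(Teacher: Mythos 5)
Your proof is correct and is the standard argument: split $P \oplus Q \cong S^{(I)}$ over $S$, restrict scalars along $f$, and use that direct sums and direct summands of projective $R$-modules are projective. The paper itself gives no proof, citing the statement as Proposition 1.4 of DeMeyer--Ingraham's \emph{Separable Algebras}, and your argument is precisely the expected proof of that cited fact, so there is nothing to add.
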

 
 Using a straightforward dual basis argument one easily obtains the next:
 
 \begin{lemma} \label{lemma:tensorProjectives} 
    Let $R$ and $S$ be unital $K$-algebras. Suppose that $X$ is a projective left $R$-module and $Y$ is a projective right $S$-module. Then $X \otimes _K Y$ is a projective left $R\otimes _K S^{op}$-module.
 \end{lemma}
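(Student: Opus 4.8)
The plan is to invoke the dual basis characterization of projectivity. Since $X$ is a projective left $R$-module, there are families $\{x_i\}_{i\in I}\subseteq X$ and $\{f_i\}_{i\in I}\subseteq\operatorname{Hom}_R(X,R)$ such that, for every $x\in X$, one has $f_i(x)=0$ for all but finitely many $i$ and $x=\sum_i f_i(x)\,x_i$; here each $f_i$ is left $R$-linear, i.e. $f_i(rx)=rf_i(x)$. Dually, projectivity of the right $S$-module $Y$ yields $\{y_j\}_{j\in J}\subseteq Y$ and right $S$-linear maps $\{\psi_j\}_{j\in J}\subseteq\operatorname{Hom}_S(Y,S)$ with $y=\sum_j y_j\,\psi_j(y)$ and the corresponding finiteness. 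I will use throughout that $X\otimes_K Y$ carries its natural $(R,S)$-bimodule structure, which is the same datum as a left $R\otimes_K S^{op}$-module via $(r\otimes s)\cdot(x\otimes y)=rx\otimes ys$.

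For the dual basis of $X\otimes_K Y$ over $R\otimes_K S^{op}$ I would take the elements $x_i\otimes y_j$ indexed by $(i,j)\in I\times J$, together with the maps
\[
  F_{ij}\colon X\otimes_K Y\to R\otimes_K S^{op},\qquad F_{ij}(x\otimes y):=f_i(x)\otimes\psi_j(y),
\]
extended $K$-linearly. The key point to check is that each $F_{ij}$ is a morphism of left $R\otimes_K S^{op}$-modules: applying $F_{ij}$ to $(r\otimes s)(x\otimes y)=rx\otimes ys$ gives $rf_i(x)\otimes\psi_j(y)s$, and this coincides with $(r\otimes s)\bigl(f_i(x)\otimes\psi_j(y)\bigr)$ precisely because multiplying by $s$ in the second slot takes place in $S^{op}$, which converts the product $s\cdot\psi_j(y)$ into $\psi_j(y)s$. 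This handedness bookkeeping — that the opposite multiplication on the $S$-factor is exactly what turns the right $S$-linearity of $\psi_j$ into left $R\otimes_K S^{op}$-linearity of $F_{ij}$ — is the only genuinely delicate point; everything else is formal.

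It then remains to verify the reconstruction identity. Evaluating,
\[
  \sum_{i,j} F_{ij}(x\otimes y)\cdot(x_i\otimes y_j)
  =\sum_{i,j} f_i(x)x_i\otimes y_j\psi_j(y)
  =\Bigl(\sum_i f_i(x)x_i\Bigr)\otimes\Bigl(\sum_j y_j\psi_j(y)\Bigr)
  = x\otimes y,
\]
where the middle equality is bilinearity of $\otimes_K$ and the last uses the two dual bases. Finiteness is immediate: $F_{ij}(x\otimes y)$ vanishes whenever $f_i(x)=0$ or $\psi_j(y)=0$, so only finitely many $(i,j)$ contribute to any simple tensor, hence to any element of $X\otimes_K Y$. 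By the dual basis lemma this exhibits $X\otimes_K Y$ as a projective left $R\otimes_K S^{op}$-module. As a cross-check, one can instead write $X$ and $Y$ as direct summands of free modules $R^{(I)}$ and $S^{(J)}$, which realizes $X\otimes_K Y$ as a direct summand of $R^{(I)}\otimes_K S^{(J)}\cong(R\otimes_K S^{op})^{(I\times J)}$, a free left $R\otimes_K S^{op}$-module, giving the same conclusion.
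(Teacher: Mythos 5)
Your proof is correct and follows exactly the route the paper indicates: the paper states Lemma~\ref{lemma:tensorProjectives} with no written proof beyond the remark that it follows ``using a straightforward dual basis argument,'' and your dual basis $\{x_i\otimes y_j,\,F_{ij}\}$ with the $S^{op}$ handedness check is precisely that argument carried out in full. Your closing alternative (realizing $X\otimes_K Y$ as a direct summand of the free module $(R\otimes_K S^{op})^{(I\times J)}$) is an equally valid packaging of the same idea.
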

 
 \begin{lemma} \label{l_AskewP} Suppose that $\mathcal{H}$ is projective over $K.$ Then 
     $(A \# \mathcal{H})^e$ is a projective left $A^e$-module.
 \end{lemma}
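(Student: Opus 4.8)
The plan is to deduce the statement from Lemma~\ref{lemma:tensorProjectives}. The left $A^e$-module structure on $(A\#\mathcal{H})^e=(A\#\mathcal{H})\otimes_K(A\#\mathcal{H})^{op}$ is the one transported along the algebra map $\phi_0\otimes\phi_0\colon A^e\to(A\#\mathcal{H})^e$, where $\phi_0(a)=a\#1_{\mathcal{H}}$ as in Lemma~\ref{lemma from Alvares-Batista}; unwinding it, an element $a\otimes b\in A^e$ acts on $x\otimes y$ by left multiplication of the first tensorand by $a\#1_{\mathcal{H}}$ and right multiplication of the second by $b\#1_{\mathcal{H}}$. Hence, taking $R=S=A$, $X=A\#\mathcal{H}$ regarded as a left $A$-module and $Y=A\#\mathcal{H}$ regarded as a right $A$-module (both via $\phi_0$), Lemma~\ref{lemma:tensorProjectives} identifies $(A\#\mathcal{H})^e$ with $X\otimes_K Y$ as left $A\otimes_K A^{op}=A^e$-modules. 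It therefore suffices to prove that $A\#\mathcal{H}$ is projective both as a left and as a right $A$-module via $\phi_0$.

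Since $\mathcal{H}$ is $K$-projective, it is a direct summand of a free $K$-module, so $A\otimes_K\mathcal{H}$ is a direct summand of a free $A$-module on either side, hence projective both as a left and as a right $A$-module, with $A$ acting on the first tensorand. For the left structure one sees at once that $A\#\mathcal{H}$ is a direct summand of $A\otimes_K\mathcal{H}$: writing $e=1_A\otimes1_{\mathcal{H}}$, which is an idempotent and a left unit for the smash product on $A\otimes\mathcal{H}$, one has $A\#\mathcal{H}=(A\otimes\mathcal{H})e$, and $x\mapsto xe$ is a left $A$-linear idempotent endomorphism of $A\otimes\mathcal{H}$ with image $A\#\mathcal{H}$ (right multiplication by $e$ commutes with left multiplication by $A\otimes1_{\mathcal{H}}$ by associativity). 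Thus $A\#\mathcal{H}$ is a projective left $A$-module.

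The right $A$-module case is the crux, and it is here that the symmetry of the action and the cocommutativity of $\mathcal{H}$ become essential. I would exhibit $A\#\mathcal{H}$ as a direct summand of the projective right $A$-module $A\otimes_K\mathcal{H}$ (with $A$ acting trivially on the first factor) by splitting the right $A$-linear map $\mu\colon A\otimes_K\mathcal{H}\to A\#\mathcal{H}$, $\mu(a\otimes h)=(1_A\#h)(a\#1_{\mathcal{H}})=(h_{(1)}\cdot a)\#h_{(2)}$, whose right $A$-linearity follows from (PA2). This map is surjective: using Lemma~\ref{l_commutSmash} together with $S^2=\operatorname{id}$ and $\Delta\circ S=(S\otimes S)\circ\Delta$ (both valid by cocommutativity), one rewrites each generator as $a\#h=(a\#1_{\mathcal{H}})(1_A\#h)=(1_A\#h_{(1)})(S(h_{(2)})\cdot a\#1_{\mathcal{H}})=\mu\bigl(S(h_{(2)})\cdot a\otimes h_{(1)}\bigr)$. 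The decisive remaining point is to produce a right $A$-linear section of $\mu$; I expect this to be the main obstacle, since in the global case $\mu$ is an isomorphism and the untwisting is classical, whereas in the partial setting $\mu$ is only surjective (its kernel being governed by the idempotents $(h\cdot 1_A)$ produced by (PA3)), so one must instead furnish an explicit right $A$-linear retraction in which the symmetric axiom (PA4) and the antipode identity $\sum k_{(1)}S(k_{(2)})=\varepsilon(k)1_{\mathcal{H}}$ force these idempotents to collapse. Equivalently, one may verify that $h\otimes a\mapsto S(h)\cdot a$ is a symmetric partial action of $\mathcal{H}$ on $A^{op}$ and that $(A\#\mathcal{H})^{op}\cong A^{op}\#\mathcal{H}$ as algebras compatibly with $\phi_0$, thereby reducing the right $A$-module case to the already-settled left $A$-module case applied to $A^{op}$. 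Once $A\#\mathcal{H}$ is known to be projective on both sides, Lemma~\ref{lemma:tensorProjectives} finishes the proof.
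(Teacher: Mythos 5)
Your global strategy coincides with the paper's: reduce, via Lemma~\ref{lemma:tensorProjectives} applied to $R=S=A$ and the two one-sided $A$-module structures transported along $\phi_0$, to showing that $A\#\mathcal{H}$ is projective as a left and as a right $A$-module, and settle the left case by cutting $A\otimes\mathcal{H}$ with the idempotent $1_A\otimes 1_{\mathcal{H}}$. Up to there you match the paper exactly. But the right $A$-module case --- which you yourself correctly identify as the crux --- is not actually proved in your proposal, on either of the two routes you sketch. Route (a) sets up the surjection $\mu(a\otimes h)=(h_{(1)}\cdot a)\#h_{(2)}$ and proves surjectivity, but then stops precisely where the real work begins: you say producing a right $A$-linear section ``is the main obstacle'' and never construct one. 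The paper does exactly this missing step, in dual-basis form: taking a dual basis $\{h_i,f_i\}$ of $\mathcal{H}$ over $K$, it defines $g_i(a\otimes h)=f_i(h_{(1)})\,(S(h_{(2)})\cdot a)$, checks right $A$-linearity using (PA2), (PA3) and cocommutativity, and verifies $\sum_i(1_A\otimes h_i)\,g_i(a\#h)=a\#h$ using the symmetry axiom; assembled, this is the section $a\#h\mapsto S(h_{(2)})\cdot a\otimes h_{(1)}$ of your $\mu$, so the obstacle you flagged is surmountable by a direct computation you did not carry out.

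Route (b) is worse: its premise is false as stated. The map $h\otimes a\mapsto S(h)\cdot a$ is in general \emph{not} a partial action of $\mathcal{H}$ on $A^{op}$, because $S$ is an algebra anti-homomorphism. Already for $\mathcal{H}=KG$ with $G$ nonabelian and a global action, axiom (PA3) for your candidate action would require
\[
g^{-1}\cdot\bigl(k^{-1}\cdot a\bigr)=(kg)^{-1}\cdot a \quad\text{to equal}\quad (gk)^{-1}\cdot a,
\]
which fails for any faithful action. The salvageable version of this reduction uses the \emph{same} structure map $h\otimes a\mapsto h\cdot a$ on $A^{op}$ (cocommutativity makes (PA2) hold there, and it interchanges (PA3) and (PA4), which is where symmetry enters), together with an isomorphism $(A\#\mathcal{H})^{op}\cong A^{op}\#\mathcal{H}$ twisted by the antipode (in the group case $a\,\delta_g'\mapsto\delta_{g^{-1}}a$); but stating and verifying that isomorphism compatibly with the one-sided $A$-structures is work of the same order as the paper's dual-basis computation, and you have done neither. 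So the proposal has a genuine gap at the one step of Lemma~\ref{l_AskewP} that is not routine.
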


 \begin{proof} 
 Since $\mathcal{H}$ is projective over $K,$ it is a direct summand of a free module, and tensoring by $A$ we readily see that  $A \otimes \mathcal{H}$ is a projective left $A$-module. Then, $A \# \mathcal{H} =  (A \otimes \mathcal{H})(1_A \otimes 1_{\mathcal{H}})$ is  projective over $A$ as a left module, being a direct summand of  $A \otimes \mathcal{H}.$   In view of Lemma~\ref{p_DMSA} it remains to show that $A \# \mathcal{H}$ is projective as a right $A$-module.
 
 Let $\{h_i, f_i\}_{i\in I}$ be a dual basis for $\mathcal{H}$ over $K$, where $I$ is some index set. Then 
 $h = \sum _{i \in I} f_i(h) h_i,$ for each $h\in \mathcal{H}.$ Define the mappings $g_i: A\otimes \mathcal{H} \to A,$ $i\in I,$ by 
 $$g_i(a \otimes h)= f_i(h_{(1)}) (S(h_{(2)}) \cdot a).$$ Then for all 
 $a, a' \in A, h\in \mathcal{H}$ and $i\in I$ we have, using cocommutativity of $\mathcal{H}$,  that
 \begin{align*} 
 g_i ((a\otimes h) (a'\otimes 1_H))
 &= g_i(a (h_{(1)} \cdot a') \otimes h_{(2)}) \\ 
 &=f_i(h_{(1)}) S(h_{(2)}) \cdot (a (h_{(3)} \cdot a'))
 f_i(h_{(1)}) (S(h_{(2)}) \cdot a)  (S(h_{(3)}) \cdot (h_{(4)} \cdot a'))\\
  &= f_i(h_{(1)}) \underbrace{(S(h_{(2)}) \cdot a)  (S(h_{(3)}) \cdot 1_A)}
 ((\underbrace{S(h_{(4)})  h_{(5)}}) \cdot a')\\
 &= f_i(h_{(1)}) (S(h_{(2)}) \cdot a) a'= g_i(a \otimes h)a'.
\end{align*}
Thus, each $g_i$ is a map of right $A$-modules. In particular,
$$g_i (a \# h) = g_i ((a \otimes h) (1_A \otimes 1_H))= 
g_i (a \otimes h) 1_A = g_i (a \otimes h),$$ for all 
$a\in A, h \in \mathcal{H}.$ Next we  show that
$\{1_A\otimes h_i, g_i\}_{i \in I}$ is a dual basis for the right $A$-module $A\# {\mathcal H},$ which will complete our proof. Indeed, for each $a\in A, h \in \mathcal{H},$ using that our partial action is symmetric and the cocommutativity of  $\mathcal{H},$  we see that
\begin{align*}
\sum _{i \in I}   (1_A\otimes h_i) g_i(a \# h)
&= \sum _{i \in I}   (1_A\otimes h_i) f_i(h_{(1)}) (S(h_{(2)}) \cdot a)\\
&=\sum _{i \in I}   (1_A\otimes h_i) 
(f_i(h_{(1)}) (S(h_{(2)}) \cdot a) \otimes 1_{\mathcal{H}})\\
&=   (1_A\otimes \sum _{i \in I}f_i(h_{(1)})h_i) 
( (S(h_{(2)}) \cdot a) \otimes 1_{\mathcal{H}})\\
&=   (1_A\otimes h_{(1)}) 
( (S(h_{(2)}) \cdot a) \otimes 1_{\mathcal{H}})=
    h_{(1)}\cdot  (S(h_{(2)}) \cdot a) \otimes h_{(2)} \\
&=( h_{(1)}  S(h_{(2)}) \cdot a) (h_{(3)}\cdot  1_A) \otimes h_{(4)}=  a (h_{(1)}\cdot  1_A) \otimes h_{(2)} = a \#h,
\end{align*} 
as desired. 
\end{proof}

 From Proposition \ref{p_DMSA} and Lemma \ref{l_AskewP} we obtain the following:
 \begin{remark} \label{r_projectiveAskewGmodulesAreAprojectives}
    If $\mathcal{H}$ is projective over $K$, then any projective  $(A \# \mathcal{H})$-bimodule is a projective $A$-bimodule. Therefore, any projective resolution of $M$ in $(A \# \mathcal{H})^e \!\operatorname{-Mod} $ is a projective resolution of $M$ in $A^e\!\operatorname{-Mod}$. Thus, considering $M$ as an $A$-bimodule, the left derived functor of $F_1$ computes the Hochschild homology of $M,$  i.e.,
     \[
         H_\bullet(A,M) \cong L_\bullet F_1(M).
     \]
 \end{remark}
 
 \begin{lemma} \label{l_teneq}
     Let $X$ be an $A \# \mathcal{H}$-bimodule and $Y$ a $\mathcal{H}_{par}$-module, then
     \begin{enumerate}[(i)]
         \item For all $h \in \mathcal{H}$ we have that $h \cdot 1_A$ is central in $A$.
         \item $e_h \triangleright a = (h \cdot 1_A) a$, for all $h \in \mathcal{H}$ and $a \in A$, considering $A$ with the $\mathcal{H}_{par}$-module structure given by (\ref{e_Hpar-modA}). Then, $e_h \triangleright 1_A = h \cdot 1_A$ and $w \triangleright a = (w \triangleright 1_A)a$ for all $a \in A$ and $w \in \mathcal{B}$;
         \item $e_h \triangleright x = (h_{(1)}\cdot 1_A) \cdot x \cdot (h_{(2)}\cdot 1_A)$ $h \in \mathcal{H}$ and $x \in X$, considering $X$ with the $\mathcal{H}_{par}$-module structure given by (\ref{e_Hpar-modM});
         \item $e_h \otimes_{\mathcal{H}_{par}} y = 1_{\mathcal{B}} \otimes_{\mathcal{H}_{par}} [S(h)] \cdot y = 1_{\mathcal{B}} \otimes_{\mathcal{H}_{par}} e_h \cdot y$, as elements of $\mathcal{B} \otimes_{\mathcal{H}_{par}}Y$, for all $h \in \mathcal{H}$ and $y \in Y$,
         \item $e_h \triangleright (a \otimes_{A^e} x) = a \otimes_{A^e} e_h \triangleright x = e_h \triangleright a \otimes_{A^e}  x$, as elements of $A \otimes_{A^e}X$; so that $w \triangleright (a \otimes_{A^e} x) = w \triangleright a \otimes_{A^e}  x=a \otimes_{A^e} w \triangleright x$, for all $w \in \mathcal{B}$.
     \end{enumerate}
 \end{lemma}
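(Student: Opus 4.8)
The plan is to prove the five items essentially in the listed order, since each later statement leans on the earlier ones together with the basic relations for $\mathcal{B}$ collected in Lemma~\ref{l_Bprop} and the module structures recorded in \eqref{e_Hpar-modM}, \eqref{e_Hpar-modA} and \eqref{e_AeMKpGM}. For item~(i), I would recall the smash-product relation of Lemma~\ref{lemma from Alvares-Batista}(ii), namely $a \# h = a(h_{(1)}\cdot 1_A)\# h_{(2)}$, and use it to compare the products $(a\#1_{\mathcal{H}})(h\cdot 1_A \# 1_{\mathcal{H}})$ and $(h\cdot 1_A\#1_{\mathcal{H}})(a\#1_{\mathcal{H}})$ inside $A\#\mathcal{H}$; since $\phi_0$ from Lemma~\ref{lemma from Alvares-Batista}(iii) is an algebra homomorphism, commutativity of $a$ with $h\cdot 1_A$ in $A\#\mathcal{H}$ forces it in $A$. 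Alternatively, one can invoke that $h\cdot 1_A$ is an idempotent-like central element, a standard fact for partial actions that follows directly from (PA2) and (PA3).

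For item~(ii), I would compute $e_h\triangleright a = [h_{(1)}][S(h_{(2)})]\triangleright a$ using the action \eqref{e_Hpar-modA}, so that $[S(h_{(2)})]\triangleright a = S(h_{(2)})\cdot a$ and then $[h_{(1)}]\triangleright(S(h_{(2)})\cdot a) = h_{(1)}\cdot(S(h_{(2)})\cdot a)$. The key is to expand $h_{(1)}\cdot(S(h_{(2)})\cdot a)$ via (PA3), giving $(h_{(1)}\cdot 1_A)(h_{(2)}S(h_{(3)})\cdot a)$, and then collapse $h_{(2)}S(h_{(3)})=\varepsilon(h_{(2)})1_{\mathcal{H}}$ by the antipode axiom (using cocommutativity to reindex), which leaves $(h\cdot 1_A)a$. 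The two consequences $e_h\triangleright 1_A = h\cdot 1_A$ and $w\triangleright a = (w\triangleright 1_A)a$ then follow by setting $a=1_A$ and by item~(i), since the central elements $h\cdot 1_A$ multiply associatively. Item~(iii) is entirely analogous but uses the bimodule action \eqref{e_Hpar-modM}: expanding $e_h\triangleright x = [h_{(1)}]\triangleright([S(h_{(2)})]\triangleright x)$ and simplifying the smash-product factors $1_A\# h_{(i)}$ and $1_A\# S(h_{(j)})$ with the antipode identity collapses the inner telescoping products to $(h_{(1)}\cdot 1_A)\cdot x\cdot(h_{(2)}\cdot 1_A)$.

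For item~(iv), I would use the tensor balancing over $\mathcal{H}_{par}$ together with Lemma~\ref{l_Bprop}(ii), which gives $e_{h_{(1)}}[h_{(2)}]=[h]$ and its relatives, to move $e_h = 1_{\mathcal{B}}\triangleright e_h$ across the tensor; the right action \eqref{e_RightActionB} on $\mathcal{B}$ identifies $1_{\mathcal{B}}\triangleleft[S(h)]$ with $e_h$ by a short computation in $\mathcal{B}$, and balancing then yields $1_{\mathcal{B}}\otimes_{\mathcal{H}_{par}}[S(h)]\cdot y$; the second equality uses that $e_h\cdot y$ and $[S(h)]\cdot y$ agree after balancing because of the relation $[S(h)]=[S(h)_{(1)}]e_{?}$ of Lemma~\ref{l_Bprop}. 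Finally, item~(v) is the cleanest: by Proposition~\ref{p_AeMAction} the element $e_h$ acts diagonally as $[h_{(1)}]\triangleright a\otimes_{A^e}[h_{(2)}]\triangleright x$ with the further splitting $e_h=[h_{(1)}][S(h_{(2)})]$, and items~(ii)--(iii) rewrite the two tensor factors as multiplication by the central elements $h\cdot 1_A$; then Lemma~\ref{l_envl} slides the central factor $h\cdot 1_A\in A$ freely across $\otimes_{A^e}$, giving all three expressions, and the statement for general $w\in\mathcal{B}$ follows since $\mathcal{B}$ is generated by the $e_h$. I expect item~(iv) to be the main obstacle, because it is the only part requiring genuine manipulation of the balanced tensor relation over the noncommutative algebra $\mathcal{H}_{par}$ rather than a direct module computation, so care is needed to apply the correct variant of Lemma~\ref{l_Bprop} and to match the left- and right-module conventions of Lemma~\ref{l_HparMODHpar}.
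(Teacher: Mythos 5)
Your plan for items (ii), (iii) and (v) coincides with the paper's own proof: expand $e_h=[h_{(1)}][S(h_{(2)})]$, apply (PA3) and the antipode axiom for (ii), the smash-product form of the action \eqref{e_Hpar-modM} for (iii), and for (v) reindex by cocommutativity to get $e_h\triangleright(a\otimes_{A^e}x)=e_{h_{(1)}}\triangleright a\otimes_{A^e}e_{h_{(2)}}\triangleright x$ and then slide the central factors $h\cdot 1_A$ across $\otimes_{A^e}$ via Lemma~\ref{l_envl}; those parts are fine. The genuine gap is in item (i). Your fallback claim that centrality of $h\cdot 1_A$ ``follows directly from (PA2) and (PA3)'' is false: axioms (PA1)--(PA3) give only the one-sided identity $h\cdot(k\cdot a)=(h_{(1)}\cdot 1_A)(h_{(2)}k\cdot a)$, and for non-symmetric partial actions $h\cdot 1_A$ need not be central --- this is exactly why the symmetry axiom (PA4) is imposed. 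The paper's computation uses (PA4) in an essential way (the step $h_{(1)}\cdot(S(h_{(2)})\cdot a)=(h_{(1)}S(h_{(3)})\cdot a)(h_{(2)}\cdot 1_A)$) together with cocommutativity to reindex the Sweedler legs. Your primary route for (i) is also incomplete as stated: comparing $(a\#1_{\mathcal{H}})(h\cdot 1_A\#1_{\mathcal{H}})$ with $(h\cdot 1_A\#1_{\mathcal{H}})(a\#1_{\mathcal{H}})$ and invoking injectivity of $\phi_0$ (which does hold, since $a\mapsto a\otimes 1_{\mathcal{H}}$ is split by $\operatorname{id}_A\otimes\varepsilon$) merely reduces the problem to proving that these two elements of $A\#\mathcal{H}$ coincide, and you give no argument for that; any such argument must again use symmetry (for instance through Lemma~\ref{l_commutSmash}, whose proof relies on (PA4)), so nothing is gained over the direct computation in $A$ that the paper performs.

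A secondary defect is the second equality of item (iv), where your justification is garbled: the placeholder relation $[S(h)]=[S(h)_{(1)}]e_{?}$ from Lemma~\ref{l_Bprop} does not produce $1_{\mathcal{B}}\otimes_{\mathcal{H}_{par}}[S(h)]\cdot y=1_{\mathcal{B}}\otimes_{\mathcal{H}_{par}}e_h\cdot y$. What is actually needed, and what the paper proves, is the identity $e_h=1_{\mathcal{B}}\triangleleft e_h$ for the right action \eqref{e_RightActionB}: one writes $e_h=e_{h_{(1)}}e_{h_{(2)}}$ by Lemma~\ref{l_Bprop}(iii), folds the four brackets $[h_{(1)}][S(h_{(2)})][h_{(3)}][S(h_{(4)})]$ into iterated right actions with one cocommutative reindexing, and then balancedness over $\mathcal{H}_{par}$ gives $e_h\otimes_{\mathcal{H}_{par}}y=(1_{\mathcal{B}}\triangleleft e_h)\otimes_{\mathcal{H}_{par}}y=1_{\mathcal{B}}\otimes_{\mathcal{H}_{par}}e_h\cdot y$ directly. (Also note that the relevant action for moving elements across $\mathcal{B}\otimes_{\mathcal{H}_{par}}Y$ is the right action on $\mathcal{B}$, not $1_{\mathcal{B}}\triangleright e_h$ as you wrote.) Your first equality in (iv), namely $1_{\mathcal{B}}\triangleleft[S(h)]=e_h$ using cocommutativity and $S^2=\operatorname{id}$, is correct and is the paper's argument. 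In summary: repair item (i) by the direct computation using (PA4) and cocommutativity, and replace the muddled step in (iv) by the computation $1_{\mathcal{B}}\triangleleft e_h=e_h$; the remainder of your plan goes through as written.
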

 
 \begin{proof} \,
     \begin{enumerate}[(i)]
         \item By direct computations using the cocommutativity and the symmetry of the partial action we obtain
         \begin{align*}
             (h \cdot 1_A)a  &= \varepsilon(h_{(1)})(h_{(2)} \cdot 1_A)a \\ 
                             &= (h_{(2)} \cdot 1_A)((\varepsilon(h_{(1)})1_{\mathcal{H}}) \cdot a) \\
                             &= (h_{(1)} \cdot 1_A)((h_{(2)}S(h_{(3)})) \cdot a) \\
                             &= h_{(1)}\cdot (S(h_{(2)}) \cdot a) \\
                             &= (h_{(1)}S(h_{(3)}) \cdot a)(h_{(2)} \cdot 1_A) \\
                             &= (h_{(1)}S(h_{(2)}) \cdot a)(h_{(3)} \cdot 1_A) \\
                             &= ((\varepsilon(h_{(1)})1_{\mathcal{H}}) \cdot a)(h_{(2)} \cdot 1_A) \\
                             &= a (\varepsilon(h_{(1)})h_{(2)} \cdot 1_A) \\
                             &= a (h \cdot 1_A).
         \end{align*}
         \item For any $h \in \mathcal{H}$ and $a \in A$ we obtain 
         \begin{align*}
             e_h \triangleright a &= [h_{(1)}][S(h_{(2)})] \triangleright a \\ 
                     &= h_{(1)}\cdot (S(h_{(2)}) \cdot a) \\ 
                     &= (h_{(1)}\cdot 1_A) (h_{(2)}S(h_{(3)}) \cdot a) \\ 
                     &= (h_{(1)}\cdot 1_A) (\varepsilon(h_{(2)})1_{\mathcal{H}} \cdot a) \\ 
                     &= (h \cdot 1_A)a.
         \end{align*}
         \item Let $h \in \mathcal{H}$ and $x \in X$. Then,
         \begin{align*}
             e_h \triangleright x    &= (1_A \# h_{(1)})(1_A \# S(h_{(3)})) \cdot x \cdot (1_A \# h_{(4)})(1_A \# S(h_{(2)})) \\
             &= (h_{(1)} \cdot 1_A \# h_{(2)}S(h_{(4)})) \cdot x \cdot (h_{(5)} \cdot 1_A \# h_{(6)}S(h_{(3)})) \\
             &= (h_{(1)} \cdot 1_A \# h_{(2)}S(h_{(3)})) \cdot x \cdot (h_{(4)} \cdot 1_A \# h_{(5)}S(h_{(6)})) \\
             &= (h_{(1)} \cdot 1_A \# \varepsilon(h_{(2)})1_{\mathcal{H}}) \cdot x \cdot (h_{(3)} \cdot 1_A \# \varepsilon(h_{(4)})1_{\mathcal{H}}) \\
             &= (h_{(1)} \cdot 1_A \# 1_{\mathcal{H}}) \cdot x \cdot (h_{(2)} \cdot 1_A \# 1_{\mathcal{H}}) \\
             &= (h_{(1)} \cdot 1_A) \cdot x \cdot (h_{(2)} \cdot 1_A).
         \end{align*}
         \item For $h \in \mathcal{H}$ and $y \in Y$ we have that
         \[
             e_h \otimes_{\mathcal{H}_{par}} y = [h_{(1)}][S(h_{(2)})]  \otimes_{\mathcal{H}_{par}} y = 1_{\mathcal{B}} \triangleleft[S(h)] \otimes_{\mathcal{H}_{par}} y = 1_{\mathcal{B}} \otimes_{\mathcal{H}_{par}} [S(h)] \cdot y 
         \]
         and
         \begin{align*}
             e_h \otimes_{\mathcal{H}_{par}} y &= e_{h_{(1)}}e_{h_{(2)}} \otimes_{\mathcal{H}_{par}} y \\ 
             &= [h_{(1)}][S(h_{(2)})]1_{\mathcal{B}}[h_{(3)}][S(h_{(4)})] \otimes_{\mathcal{H}_{par}} y \\
             &= [h_{(1)}](1_{\mathcal{B}} \triangleleft [h_{(2)}])[S(h_{(3)})] \otimes_{\mathcal{H}_{par}} y \\ 
             &= [h_{(1)}](1_{\mathcal{B}} \triangleleft [h_{(3)}])[S(h_{(2)})] \otimes_{\mathcal{H}_{par}} y \\ 
             &= (1_{\mathcal{B}} \triangleleft [h_{(2)}]) \triangleleft [S(h_{1})] \otimes_{\mathcal{H}_{par}} y \\ 
             &= (1_{\mathcal{B}} \triangleleft [h_{(1)}][S(h_{2})]) \otimes_{\mathcal{H}_{par}} y \\ 
             &= (1_{\mathcal{B}} \triangleleft e_h) \otimes_{\mathcal{H}_{par}} y \\
             &= 1_{\mathcal{B}} \otimes_{\mathcal{H}_{par}} e_h \cdot y.
         \end{align*}
         \item For $a \in A$ and $x \in X$ we obtain
         \begin{align*}
             e_h \triangleright (a \otimes_{A^e} x) &= [h_{(1)}] \triangleright ([S(h_{(2)})] \triangleright a \otimes_{A^e}[S(h_{(3)})] \triangleright x) \\
             &= [h_{(1)}][S(h_{(3)})] \triangleright a \otimes_{A^e} [h_{(2)}][S(h_{(4)})] \triangleright x \\ 
             &= [h_{(1)}][S(h_{(2)})] \triangleright a \otimes_{A^e} [h_{(3)}][S(h_{(4)})] \triangleright x \\
             &= e_{h_{(1)}} \triangleright a \otimes_{A^e} e_{h_{(2)}} \triangleright x \\
            (\text{by } (ii) \text{ and } (iii)) &= (h_{(1)}\cdot 1_A)a \otimes_{A^e} (h_{(2)}\cdot 1_A) \cdot x \cdot (h_{(3)}\cdot 1_A)
         \end{align*}
         Now we have that
         \begin{align*}
             (h_{(1)}\cdot 1_A)a \otimes_{A^e} (h_{(2)}\cdot 1_A) \cdot x \cdot (h_{(3)}\cdot 1_A)
             &= a \otimes_{A^e} (h_{(1)}\cdot 1_A) \cdot x \cdot (h_{(2)}\cdot 1_A)(h_{(3)}\cdot 1_A) \\
             &= a \otimes_{A^e} (h_{(1)}\cdot 1_A) \cdot x \cdot (h_{(2)}\cdot 1_A) \\
             &= a \otimes_{A^e} e_h \triangleright x,
         \end{align*}
         and on the other hand
         \begin{align*}
             (h_{(1)}\cdot 1_A)a \otimes_{A^e} (h_{(2)}\cdot 1_A) \cdot x \cdot (h_{(3)}\cdot 1_A)
             &= (h_{(3)}\cdot 1_A)(h_{(1)}\cdot 1_A)a(h_{(2)}\cdot 1_A) \otimes_{A^e}  x  \\
             (\text{by }(i))&= (h_{(1)}\cdot 1_A)(h_{(2)}\cdot 1_A)(h_{(3)}\cdot 1_A)a \otimes_{A^e}  x  \\
             &= (h\cdot 1_A)a \otimes_{A^e}  x  \\
            (\text{by }(ii)) &= e_h \triangleright a \otimes_{A^e}  x.
         \end{align*}
     \end{enumerate}
 \end{proof}

 \begin{proposition}
     The algebra $A \# \mathcal{H}$ is a $\mathcal{H}_{par}$-module with action, given by 
    \begin{equation} \label{e_KparG-mod-ArxG}
     [h] \triangleright (a\# k) := (1_A \# h) (a\# k).
    \end{equation}
    In particular
    \begin{equation} \label{e_ehbysmash}
     e_h \triangleright (a\# k) = ((h \cdot 1_A) a\# k) = (e_h \triangleright a)\# k.
    \end{equation}
 \end{proposition}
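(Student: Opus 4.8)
The plan is to deduce the module structure from a partial representation already at hand rather than verifying the $\mathcal{H}_{par}$-axioms by brute force. Recall that the map $\pi_0 \colon \mathcal{H} \to A \# \mathcal{H}$, $h \mapsto 1_A \# h$, is a partial representation of $\mathcal{H}$ (this is \cite[Example 3.7]{ALVES2015137}, using that the partial action is symmetric). On the other hand, it was observed above that any partial representation $\phi \colon \mathcal{H} \to \mathcal{R}$ into a unital algebra $\mathcal{R}$ turns $\mathcal{R}$ into a left $\mathcal{H}_{par}$-module via $[h] \triangleright r := \phi(h) r$, in view of the identification $\mathcal{R} \cong \operatorname{End}_{\mathcal{R}}(\mathcal{R})$ and Theorem~\ref{t_Universal Property Partial Actions}. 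Applying this with $\mathcal{R} = A \# \mathcal{H}$ and $\phi = \pi_0$ immediately gives that $A \# \mathcal{H}$ is a left $\mathcal{H}_{par}$-module with $[h] \triangleright (a \# k) = (1_A \# h)(a \# k)$, which is exactly \eqref{e_KparG-mod-ArxG}. Thus the first assertion needs no further work.

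For the \emph{in particular} statement I would compute $e_h \triangleright (a \# k)$ directly. Writing $e_h = [h_{(1)}][S(h_{(2)})]$ and using the module structure just obtained,
\[
    e_h \triangleright (a \# k) = (1_A \# h_{(1)})(1_A \# S(h_{(2)}))(a \# k).
\]
The key intermediate step is to simplify the product $(1_A \# h_{(1)})(1_A \# S(h_{(2)}))$. By the multiplication rule of Lemma~\ref{lemma from Alvares-Batista}(i) this equals $(h_{(1)} \cdot 1_A) \# h_{(2)} S(h_{(3)})$, and then the antipode identity $h_{(2)} S(h_{(3)}) = \varepsilon(h_{(2)}) 1_{\mathcal{H}}$ together with the counit axiom collapses it to $(h \cdot 1_A) \# 1_{\mathcal{H}}$.

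It then remains to multiply $((h \cdot 1_A) \# 1_{\mathcal{H}})(a \# k)$; by the same product rule, axiom (PA1), and $\Delta(1_{\mathcal{H}}) = 1_{\mathcal{H}} \otimes 1_{\mathcal{H}}$, this equals $((h \cdot 1_A) a) \# k$. Finally, Lemma~\ref{l_teneq}(ii) identifies $(h \cdot 1_A) a$ with $e_h \triangleright a$ for the $\mathcal{H}_{par}$-module structure on $A$, yielding $e_h \triangleright (a \# k) = ((h \cdot 1_A) a) \# k = (e_h \triangleright a) \# k$, as claimed. The only place requiring care is the Sweedler bookkeeping in the collapse of $(1_A \# h_{(1)})(1_A \# S(h_{(2)}))$, where coassociativity must be tracked correctly; everything else is formal, so I anticipate no serious obstacle.
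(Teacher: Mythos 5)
Your proposal is correct and follows essentially the same route as the paper: the module structure is obtained from the partial representation $\pi_0(h) = 1_A \# h$ via the universal property of $\mathcal{H}_{par}$, and the identity \eqref{e_ehbysmash} is verified by the same Sweedler computation, collapsing $(1_A \# h_{(1)})(1_A \# S(h_{(2)}))$ to $(h\cdot 1_A)\# 1_{\mathcal{H}}$ via the antipode and counit axioms and invoking Lemma~\ref{l_teneq}(ii) at the end. No gaps.
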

 
 \begin{proof}
     We know that $\pi_0:\mathcal{H} \to A \# \mathcal{H}$ such that $\pi_0(h):=1_A \# h$ is a partial representation of $\mathcal{H}$, whence we conclude that $A \# \mathcal{H}$ is a $\mathcal{H}_{par}$-module with action (\ref{e_KparG-mod-ArxG}). Finally, by direct computation we obtain
 
     \begin{align*}
         e_h \triangleright (a\# k) &= (1_A \# h_{(1)})(1_A \# S(h_{(2)}))(a\# k) \\ 
         &= (h_{(1)}\cdot 1_A \# h_{(2)}S(h_{(3)}))(a\# k) \\ 
         &=(h_{(1)}\cdot 1_A \# \varepsilon(h_{(2)})1_{\mathcal{H}})(a\# k) \\
         &=(h \cdot 1_A \# 1_{\mathcal{H}})(a\# k) \\
         &=(h \cdot 1_A) a\# k \\
         &=(e_h \triangleright a)\# k,
     \end{align*}
     where the final equality is due $(ii)$ of Lemma \ref{l_teneq}.
 \end{proof}
 
 \begin{proposition} \label{p_ArtG-Mod-XtensorArtG}
     Let $X$ be a right $\mathcal{H}_{par}$-module. Then, $X \otimes_{\mathcal{B}} (A\# \mathcal{H})$ is an $A\# \mathcal{H}$-bimodule with actions:
     \[
          a \# h \cdot (x \otimes_{\mathcal{B}} c \# t) := x \cdot [S(h_{(1)})] \otimes_{\mathcal{B}} (a \# h_{(2)})(c \# t)
     \]
     and
     \[
     (x \otimes_{\mathcal{B}} c \#t) \cdot a \# h:= x \otimes_{\mathcal{B}} (c \#t)(a \# h).
     \]
     Thus, we obtain a functor $- \otimes_{\mathcal{B}} A \# \mathcal{H}: \operatorname{Mod-}\!\mathcal{H}_{par} \to (A\# \mathcal{H})^e \!\operatorname{-Mod}$.
 \end{proposition}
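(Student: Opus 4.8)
The plan is to verify separately that the two displayed formulas give a well-defined right and a well-defined left $A\#\mathcal{H}$-module structure on $X\otimes_{\mathcal{B}}(A\#\mathcal{H})$, that these two structures commute, and finally that the assignment is functorial. Throughout I would use that $X$ is a right $\mathcal{B}$-module by restriction of scalars along $\mathcal{B}\hookrightarrow\mathcal{H}_{par}$, that $A\#\mathcal{H}$ is a left $\mathcal{B}$-module via \eqref{e_KparG-mod-ArxG} with $e_h\triangleright(a\#k)=(h\cdot1_A)a\#k$ by \eqref{e_ehbysmash}, and that the right $\mathcal{B}$-action on $X$ is $x\triangleleft[h]=[S(h)]\triangleright x$ as in \eqref{e_RightActionB}; thus $X\otimes_{\mathcal{B}}(A\#\mathcal{H})$ is a well-defined $K$-module to begin with.

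The right action is the easy part. Since $\mathcal{B}$ acts on $A\#\mathcal{H}$ on the left by actual left multiplication (each $e_h$ acting as multiplication by $(h\cdot1_A)\#1_{\mathcal{H}}$), this action commutes with right multiplication by any $a\#h$; hence $x\cdot w\otimes_{\mathcal{B}}(c\#t)(a\#h)=x\otimes_{\mathcal{B}}w\triangleright\big((c\#t)(a\#h)\big)$ for $w\in\mathcal{B}$, so the formula $(x\otimes_{\mathcal{B}}c\#t)\cdot(a\#h):=x\otimes_{\mathcal{B}}(c\#t)(a\#h)$ descends to the tensor product, and it is associative and unital by associativity of $A\#\mathcal{H}$.

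The left action is where the genuine difficulty lies. The formula moves $[S(h_{(1)})]$ into the first tensor factor, and both well-definedness over $\otimes_{\mathcal{B}}$ and the module identity $\big((a\#h)(a'\#h')\big)\cdot\xi=(a\#h)\cdot\big((a'\#h')\cdot\xi\big)$ confront a $\mathcal{H}_{par}$-product of antipode-brackets with a single bracket of a product. Indeed, for associativity the right-hand side produces $[S((h_{(2)}h')_{(1)})]=[S(h'_{(1)})S(h_{(2)})]$, while the iterated left action produces the product $[S(h'_{(1)})][S(h_{(1)})]$. The bridge between them is the identity
\[
[S(h'_{(1)})][S(h_{(1)})]=[S(h'_{(1)})S(h_{(1)})]\,e_{h_{(2)}},
\]
which I would derive from the defining relation $[p][S(k_{(1)})][k_{(2)}]=[pS(k_{(1)})][k_{(2)}]$ of $\mathcal{H}_{par}$ together with the particular case $[S(h_{(1)})]e_{h_{(2)}}=[S(h)]$ of Lemma~\ref{l_Bprop}(ii). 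Once the product of brackets is rewritten as a single bracket times the idempotent $e_{h_{(2)}}\in\mathcal{B}$, this idempotent transports across $\otimes_{\mathcal{B}}$ and becomes left multiplication by $(h_{(2)}\cdot1_A)\#1_{\mathcal{H}}$ on the smash factor; using centrality of $h_{(2)}\cdot1_A$ (Lemma~\ref{l_teneq}(i)), the identity $(h_{(2)}\cdot1_A)(h_{(3)}\cdot a')=h_{(2)}\cdot a'$ from (PA2), and cocommutativity to relabel the remaining coproduct legs, the two expressions coincide. Well-definedness over $\otimes_{\mathcal{B}}$ I would check on the algebra generators $e_k$ of $\mathcal{B}$ by the same toolkit: Lemma~\ref{l_Bprop}(i)--(ii) transports $e_k$ through $[S(h_{(1)})]$, \eqref{e_ehbysmash} turns idempotents into elements of the smash factor, and (PA2), (PA3), the symmetry (PA4), centrality, the antipode axiom $S(h_{(1)})h_{(2)}=\varepsilon(h)1_{\mathcal{H}}$ and cocommutativity match the two sides; the unit acts trivially since $[S(1_{\mathcal{H}})]=[1_{\mathcal{H}}]=1_{\mathcal{H}_{par}}$.

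Finally, the two actions commute because the left action alters only the first tensor factor and left-multiplies the smash factor, while the right action right-multiplies the smash factor; associativity in $A\#\mathcal{H}$ then yields $\big((a\#h)\cdot\xi\big)\cdot(a'\#h')=(a\#h)\cdot\big(\xi\cdot(a'\#h')\big)$, so $X\otimes_{\mathcal{B}}(A\#\mathcal{H})$ is an $A\#\mathcal{H}$-bimodule. For functoriality, a morphism $f\colon X\to X'$ of right $\mathcal{H}_{par}$-modules is in particular right $\mathcal{B}$-linear, so $f\otimes\mathrm{id}$ descends to $X\otimes_{\mathcal{B}}(A\#\mathcal{H})\to X'\otimes_{\mathcal{B}}(A\#\mathcal{H})$; since $f$ commutes with the full right $\mathcal{H}_{par}$-action it intertwines both module structures, giving a bimodule map, and functoriality of $-\otimes_{\mathcal{B}}(A\#\mathcal{H})$ follows. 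I expect the whole difficulty to be concentrated in the bracket-reconciliation for the left action described above, the rest being formal.
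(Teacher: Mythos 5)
Your proposal is correct and follows essentially the same route as the paper's proof: the right action descends trivially, the left action's $\mathcal{B}$-balancedness is checked on the generators $e_k$ via Lemma~\ref{l_Bprop} and \eqref{e_ehbysmash}, and your bracket-reconciliation identity $[S(h'_{(1)})][S(h_{(1)})]\otimes h_{(2)} = [S(h'_{(1)})S(h_{(1)})]e_{h_{(2)}}\otimes h_{(3)}$ is precisely the paper's step of inserting $e_{k_{(2)}}$ by Lemma~\ref{l_Bprop}(ii), merging the brackets via the defining relation, and transporting the idempotent across $\otimes_{\mathcal{B}}$ to become $(k_{(2)}\cdot 1_A)$ in the smash factor. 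The only slip is your framing remark that the right $\mathcal{B}$-action on $X$ comes from \eqref{e_RightActionB} — a general right $\mathcal{H}_{par}$-module carries its own right action by restriction, and \eqref{e_RightActionB} concerns $\mathcal{B}$ itself — but this is inessential and your argument never actually uses it.
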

 
 \begin{proof}
     The right action is well-defined since it is just the action induced by the right multiplication on $A\# \mathcal{H}$. For the left action we want to verify that $a \# h \cdot (x \cdot e_k\otimes_{\mathcal{B}} c \# t) = a \# h \cdot (x \otimes_{\mathcal{B}} e_k \triangleright(c \# t))$, i.e.,
     \[
         (x \cdot e_k)\cdot[S(h_{(1)})] \otimes_{\mathcal{B}}(a \# h_{(2)})(c \# t) = x\cdot[S(h_{(1)})] \otimes_{\mathcal{B}}(a \# h_{(2)})(e_k \triangleright (c \# t)).
     \]
     Starting the computations with the right part of the above equation we have
     \begin{align*}
         x\cdot[S(h_{(1)})] &\otimes_{\mathcal{B}}(a \# h_{(2)})(e_k \triangleright (c \# t))  \\
         &= x\cdot[S(h_{(1)})] \otimes_{\mathcal{B}}(a \# h_{(2)})(1_A \# k_{(1)})(1_A \# S(k_{(2)}))(c \# t) \\
         &= x\cdot[S(h_{(1)})] \otimes_{\mathcal{B}}(a \# h_{(2)})(k_{(1)} \cdot 1_A \# k_{(2)}S(k_{(3)}))(c \# t) \\
         &= x\cdot[S(h_{(1)})] \otimes_{\mathcal{B}}(a \# h_{(2)})(k_{(1)} \cdot 1_A \# \varepsilon(k_{(2)})1_{\mathcal{H}})(c \# t) \\
         &= x\cdot[S(h_{(1)})] \otimes_{\mathcal{B}}(a \# h_{(2)})(\varepsilon(k_{(2)})k_{(1)} \cdot 1_A \# 1_{\mathcal{H}})(c \# t) \\
         &= x\cdot[S(h_{(1)})] \otimes_{\mathcal{B}}(a \# h_{(2)})(k \cdot 1_A \# 1_{\mathcal{H}})(c \# t) \\
         &= x\cdot[S(h_{(1)})] \otimes_{\mathcal{B}}(a (h_{(2)} \cdot (k \cdot 1_A)) \# h_{(3)})(c \# t) \\
         &= x\cdot[S(h_{(1)})] \otimes_{\mathcal{B}}\big(a (h_{(2)} \cdot 1_A)(h_{(3)}k \cdot 1_A) \# h_{(4)}\big)(c \# t) \\
         (\text{by (\ref{e_ehbysmash})})&= x\cdot[S(h_{(1)})] \otimes_{\mathcal{B}} e_{h_{(2)}} \triangleright \big((a (h_{(3)}k \cdot 1_A) \# h_{(4)})(c \# t)\big) \\
         &= x\cdot[S(h_{(1)})]e_{h_{(2)}} \otimes_{\mathcal{B}} \big((a (h_{(3)}k \cdot 1_A) \# h_{(4)})(c \# t)\big) \\
         (\text{by Lemma \ref{l_Bprop} }(ii)) &= x\cdot[S(h_{(1)})] \otimes_{\mathcal{B}} \big((a (h_{(2)}k \cdot 1_A) \# h_{(3)})(c \# t)\big) \\
         (\text{by (\ref{e_ehbysmash})}) &= x\cdot[S(h_{(1)})] \otimes_{\mathcal{B}} e_{h_{(2)}k} \triangleright \big((a \# h_{(3)})(c \# t)\big) \\ 
         &= x\cdot[S(h_{(1)})]e_{h_{(2)}k} \otimes_{\mathcal{B}} \big((a \# h_{(3)})(c \# t)\big) \\
         (\text{by Lemma \ref{l_Bprop} }(i)) &= x\cdot e_k [S(h_{(1)})] \otimes_{\mathcal{B}} \big((a \# h_{(2)})(c \# t)\big) \\ 
         &= (x \cdot e_k)\cdot[S(h_{(1)})] \otimes_{\mathcal{B}}(a \# h_{(2)})(c \# t).
     \end{align*}
     Then, $a \# h \cdot (x \otimes_{\mathcal{B}} c \# t) = x \cdot [S(h_{(1)})] \otimes_{\mathcal{B}} (a \# h_{(2)})(c \# t)$ is well-defined. Now observe that
     \begin{align*}
         (b\# k) \cdot \bigl( a \# h \cdot (x \otimes_{\mathcal{B}} c \# t) \bigr) 
         &= (b\# k) \cdot \bigl(  x \cdot [S(h_{(1)})] \otimes_{\mathcal{B}} (a \# h_{(2)})(c \# t) \bigr) \\ 
         &= x \cdot [S(h_{(1)})][S(k_{(1)})] \otimes_{\mathcal{B}} (b \# k_{(2)})(a \# h_{(2)})(c \# t) \\
        (\text{By Lemma \ref{l_Bprop} }(ii)) &= x \cdot [S(h_{(1)})][S(k_{(1)})]e_{k_{(2)}} \otimes_{\mathcal{B}} (b \# k_{(3)})(a \# h_{(2)})(c \# t) \\
         &= x \cdot [S(h_{(1)})S(k_{(1)})]e_{k_{(2)}} \otimes_{\mathcal{B}} (b \# k_{(3)})(a \# h_{(2)})(c \# t) \\
         &= x \cdot [S(h_{(1)})S(k_{(1)})] \otimes_{\mathcal{B}} e_{k_{(2)}} \triangleright \bigl((b \# k_{(3)})(a \# h_{(2)})(c \# t)\bigr) \\
         &= x \cdot [S(k_{(1)}h_{(1)})] \otimes_{\mathcal{B}} e_{k_{(2)}} \triangleright \bigl((b \# k_{(3)})(a \# h_{(2)})(c \# t)\bigr) \\
         &= x \cdot [S(k_{(1)}h_{(1)})] \otimes_{\mathcal{B}} ((k_{(2)}\cdot 1_A)b \# k_{(3)})(a \# h_{(2)})(c \# t) \\
         (\text{by Lemma \ref{lemma from Alvares-Batista} } (ii))&= x \cdot [S(k_{(1)}h_{(1)})] \otimes_{\mathcal{B}} (b \# k_{(2)})(a \# h_{(2)})(c \# t) \\
         &= x \cdot [S(k_{(2)}h_{(1)})] \otimes_{\mathcal{B}} (b \# k_{(1)})(a \# h_{(2)})(c \# t) \\
         &= x \cdot [S(k_{(2)}h_{(1)})] \otimes_{\mathcal{B}} (b (k_{(1)}\cdot a) \# k_{(3)}h_{(2)})(c \# t) \\
         &= (b (k_{(1)} \cdot a) \# k_{(2)}h) \cdot (x \otimes_{\mathcal{B}} c \# t) \\ 
         &= (b\# k)(a \# h) \cdot (x \otimes_{\mathcal{B}} c \# t).
     \end{align*} 
     Therefore, the left action is well-defined, and since 
     \[
         \bigl(a \# h \cdot (x \otimes_{\mathcal{B}} c \# t) \bigr)\cdot b \#k = x \cdot [S(h_{(1)})] \otimes_{\mathcal{B}} (a \# h_{(2)})(c \# t)(b \#k) = a \# h \cdot \bigl((x \otimes_{\mathcal{B}} c \# t) \cdot b \#k\bigr).
     \]
     We have that $X \otimes_{\mathcal{B}} A \# \mathcal{H}$ is a $A \# \mathcal{H}$-bimodule. Finally, if $f: X \to Y$ is a map of right $\mathcal{H}_{par}$-modules then $f \otimes_{\mathcal{B}}1: X \otimes_{\mathcal{B}} A \# \mathcal{H} \to Y \otimes_{\mathcal{B}} A \# \mathcal{H}$ is a map of $A \# \mathcal{H}$-bimodules. Indeed, it is clear that $f \otimes_{\mathcal{B}}1$ is a map of right $A \# \mathcal{H}$-modules. On the other hand, observe that for any $x \in X$ and $z \in A \# \mathcal{H}$ we have
     \begin{align*}
         (a \# h) \cdot \big((f \otimes_{\mathcal{B}}1) (x \otimes_{\mathcal{B}} z) \big) & = (a \# h) \cdot (f(x)\otimes_{\mathcal{B}}z) \\ 
         &= f(x) \cdot [S(h_{(1)})] \otimes_{\mathcal{B}} (a \#h_{(2)})z \\
         &= f(x \cdot [S(h_{(1)})]) \otimes_{\mathcal{B}} (a \#h_{(2)})z \\
         &= (f\otimes_{\mathcal{B}}1) \bigl(   x \cdot [S(h_{(1)})] \otimes_{\mathcal{B}} (a \#h_{(2)})z \bigr) \\
         &= (f\otimes_{\mathcal{B}}1) \bigl(  (a \#h) \cdot ( x  \otimes_{\mathcal{B}} z) \bigr).
     \end{align*}
 \end{proof}
 
 \begin{proposition} \label{p_inXB}
     The functors
     \[
         -\otimes_{\mathcal{H}_{par}} (A \otimes_{A^e}-): \operatorname{Mod-}\!\mathcal{H}_{par} \times (A \# \mathcal{H})^e\!\operatorname{-Mod}  \to K\operatorname{-Mod} 
     \]
     and
     \[
         (- \otimes_{\mathcal{B}} A \# \mathcal{H}) \otimes_{(A \# \mathcal{H})^e}-:\operatorname{Mod-}\!\mathcal{H}_{par} \times (A \# \mathcal{H})^e\!\operatorname{-Mod}  \to K\operatorname{-Mod}
     \] 
     are naturally isomorphic.
 \end{proposition}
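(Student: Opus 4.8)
The plan is to exhibit two mutually inverse natural transformations given by explicit formulas on generators. For a right $\mathcal{H}_{par}$-module $X$ and an $A\#\mathcal{H}$-bimodule $M$ I would set
\[
\Phi_{X,M}\bigl(x\otimes_{\mathcal{H}_{par}}(a\otimes_{A^e}m)\bigr)=(x\otimes_{\mathcal{B}}(a\#1_{\mathcal{H}}))\otimes_{(A\#\mathcal{H})^e}m,
\]
and, in the opposite direction,
\[
\Psi_{X,M}\bigl((x\otimes_{\mathcal{B}}z)\otimes_{(A\#\mathcal{H})^e}m\bigr)=x\otimes_{\mathcal{H}_{par}}(1_A\otimes_{A^e}z\cdot m),
\]
where $z\cdot m$ denotes the left $A\#\mathcal{H}$-action on $M$. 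The first thing I would record is the reduction
\[
(x\otimes_{\mathcal{B}}z)\otimes_{(A\#\mathcal{H})^e}m=(x\otimes_{\mathcal{B}}1_{A\#\mathcal{H}})\otimes_{(A\#\mathcal{H})^e}(z\cdot m),
\]
which follows because the right $A\#\mathcal{H}$-action on $X\otimes_{\mathcal{B}}A\#\mathcal{H}$ from Proposition~\ref{p_ArtG-Mod-XtensorArtG} is right multiplication on the smash factor, together with the second identity of Lemma~\ref{l_envl} applied over $A\#\mathcal{H}$. Granting well-definedness, this reduction makes both composites the identity at once: $\Phi_{X,M}\Psi_{X,M}=\operatorname{id}$ is immediate, and $\Psi_{X,M}\Phi_{X,M}=\operatorname{id}$ uses in addition that $1_A\otimes_{A^e}(a\cdot m)=a\otimes_{A^e}m$ in $A\otimes_{A^e}M$. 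Thus all the content lies in checking that the two formulas descend to the quotient tensor products.

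To define $\Phi_{X,M}$ I would first verify that $(x,a\otimes_{A^e}m)\mapsto(x\otimes_{\mathcal{B}}(a\#1_{\mathcal{H}}))\otimes m$ respects the $A^e$-balancing in the inner tensor; this is a short computation transporting the relevant factors with Lemma~\ref{l_envl} over $A\#\mathcal{H}$ and the product formula of Lemma~\ref{lemma from Alvares-Batista}. Dually, to define $\Psi_{X,M}$ I would check $\mathcal{B}$-balancing, i.e. that $x\cdot e_k\otimes_{\mathcal{B}}z$ and $x\otimes_{\mathcal{B}}(e_k\triangleright z)$ have equal image: one rewrites $e_k\triangleright z=(k\cdot 1_A\#1_{\mathcal{H}})z$ via \eqref{e_ehbysmash}, transports the central element $k\cdot 1_A$ onto the $A$-slot using Lemma~\ref{l_envl}, recognizes the result as $e_k\triangleright(1_A\otimes_{A^e}z\cdot m)$ through Lemma~\ref{l_teneq}(ii),(v), and finally absorbs $e_k$ into $x$ by the $\mathcal{H}_{par}$-balancing of the outer tensor.

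The main obstacle is the remaining balancing relation for each map. For $\Phi_{X,M}$ this is the $\mathcal{H}_{par}$-balancing
\[
(x\cdot[h]\otimes_{\mathcal{B}}(a\#1_{\mathcal{H}}))\otimes_{(A\#\mathcal{H})^e}m=(x\otimes_{\mathcal{B}}((h_{(1)}\cdot a)\#1_{\mathcal{H}}))\otimes_{(A\#\mathcal{H})^e}[h_{(2)}]\triangleright m,
\]
obtained from Proposition~\ref{p_AeMAction} and \eqref{e_Hpar-modA}; for $\Psi_{X,M}$ it is the entirely analogous $(A\#\mathcal{H})^e$-balancing. Both are Sweedler computations: starting from the right-hand side one expands $[h_{(2)}]\triangleright m=(1_A\#h_{(2)})\cdot m\cdot(1_A\#S(h_{(3)}))$ via \eqref{e_Hpar-modM}, moves these smash factors across the $(A\#\mathcal{H})^e$-tensor with Lemma~\ref{l_envl}, and then evaluates the induced twisted left action using the formula of Proposition~\ref{p_ArtG-Mod-XtensorArtG}, the product in $A\#\mathcal{H}$, Lemma~\ref{l_Bprop}, and the commutation identity of Lemma~\ref{l_commutSmash}. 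Cocommutativity is essential throughout—both to apply $\Delta\circ S=(S\otimes S)\circ\Delta$ and to permute Sweedler legs freely—as is the symmetry of the partial action; the antipode axioms then collapse the $\mathcal{H}$-component back to $a\#1_{\mathcal{H}}$, while Lemma~\ref{l_Bprop} recombines the surviving bracket factors into $x\cdot[h]$. I expect this bookkeeping to be the delicate part of the argument.

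Finally, naturality in both arguments should be routine. Since $\Phi_{X,M}$ is given by an explicit formula on generators, a morphism $f\colon X\to X'$ of right $\mathcal{H}_{par}$-modules and a morphism $g\colon M\to M'$ of $A\#\mathcal{H}$-bimodules induce the componentwise maps $f\otimes_{\mathcal{H}_{par}}(\operatorname{id}_A\otimes_{A^e}g)$ and $(f\otimes_{\mathcal{B}}\operatorname{id})\otimes_{(A\#\mathcal{H})^e}g$ on the two sides, so the naturality squares commute by inspection; hence $\Phi$ is a natural isomorphism with inverse $\Psi$.
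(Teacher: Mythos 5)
Your proposal is correct and follows essentially the same route as the paper's proof: the maps $\Phi_{X,M}$ and $\Psi_{X,M}$ coincide with the paper's $\gamma_{(X,M)}$ and $\psi_{(X,M)}$ (your normal form $(x\otimes_{\mathcal{B}}(a\#1_{\mathcal{H}}))\otimes_{(A\#\mathcal{H})^e}m$ equals the paper's $(x\otimes_{\mathcal{B}}1_A\#1_{\mathcal{H}})\otimes_{(A\#\mathcal{H})^e}a\cdot m$ by Lemma~\ref{l_envl}), and the balancing obligations you identify, together with the tools you cite (Lemma~\ref{l_envl}, Lemma~\ref{l_Bprop}, Lemma~\ref{l_teneq}, Lemma~\ref{l_commutSmash}, \eqref{e_ehbysmash}, Proposition~\ref{p_ArtG-Mod-XtensorArtG}, cocommutativity and symmetry), are exactly those carried out in the paper's two long Sweedler computations, which do close as you predict. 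Your explicit reduction $(x\otimes_{\mathcal{B}}z)\otimes_{(A\#\mathcal{H})^e}m=(x\otimes_{\mathcal{B}}1_{A\#\mathcal{H}})\otimes_{(A\#\mathcal{H})^e}(z\cdot m)$ is a tidy organizational improvement that makes the mutual-inverse check immediate, but it does not change the substance of the argument.
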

 
 \begin{proof}
     Let $X$ be a right $\mathcal{H}_{par}$-module and $M$ and $A \# \mathcal{H}$-bimodule. For a fixed $x \in X$ define
     \begin{align*}
         \tilde{\gamma}_{x,M}: A \times M & \to (X \otimes_{\mathcal{B}} A \# \mathcal{H})  \otimes_{(A \# \mathcal{H})^e}M \\ 
         (a,m) &\mapsto (x \otimes_{\mathcal{B}} 1_A \# 1_{\mathcal{H}}) \otimes_{(A \# \mathcal{H})^e} a \cdot m,
     \end{align*} 
     Observe that $\tilde{\gamma}_{x,M}$ is $A^e$-balanced. Indeed, for $d \otimes c \in A^e$ we have
     \begin{align*}
         \tilde{\gamma}_{x,M}(a\cdot(d \otimes c),m) &= \tilde{\gamma}_{x,M}(cad,m) \\ 
                                                 &= (x \otimes_{\mathcal{B}} 1_A \# 1_{\mathcal{H}}) \otimes_{(A \# \mathcal{H})^e} cad \cdot m \\
             (\text{by Lemma } \ref{l_envl})     &= (x \otimes_{\mathcal{B}} 1_A \# 1_{\mathcal{H}})\cdot (c \# 1_{\mathcal{H}}) \otimes_{(A \# \mathcal{H})^e} a\cdot (d \cdot m) \\            
                                                 &= (x \otimes_{\mathcal{B}} c \# 1_{\mathcal{H}}) \otimes_{(A \# \mathcal{H})^e} a\cdot (d \cdot m) \\
                                                 &= (c \# 1_{\mathcal{H}})\cdot (x \otimes_{\mathcal{B}} 1_A \# 1_{\mathcal{H}}) \otimes_{(A \# \mathcal{H})^e} a\cdot (d \cdot m) \\
             (\text{by Lemma } \ref{l_envl})     &= (x \otimes_{\mathcal{B}} 1_A \# 1_{\mathcal{H}}) \otimes_{(A \# \mathcal{H})^e} a\cdot (d \cdot m) \cdot (c \# 1_{\mathcal{H}}) \\
                                                 &= (x \otimes_{\mathcal{B}} 1_A \# 1_{\mathcal{H}}) \otimes_{(A \# \mathcal{H})^e} a\cdot (d \cdot m \cdot c) \\
                                                 &= \tilde{\gamma}_{x,M}(a,(d \otimes c) \cdot m),
     \end{align*}
     so, $\tilde{\gamma}_{x,M}$ is $A^e$-balanced. Therefore, the following map is well-defined
     \begin{align*}
         \gamma_{x,M}: A \otimes_{A^e} M & \to (X \otimes_{\mathcal{B}} A \# \mathcal{H})  \otimes_{(A \# \mathcal{H})^e}M \\ 
         a \otimes_{A^e} m &\mapsto (x \otimes_{\mathcal{B}} 1_A \# 1_{\mathcal{H}}) \otimes_{(A \# \mathcal{H})^e} a \cdot m.
     \end{align*} 
     Now define the function
     \begin{align*}
         \tilde{\gamma}_{(X,M)} : X \times (A\otimes_{A^e}M) & \to (X \otimes_{\mathcal{B}} A \#\mathcal{H})  \otimes_{(A \# \mathcal{H})^e}M \\ 
         (x,a\otimes_{A^e}m) &\mapsto \gamma_{x,M}(a\otimes_{A^e}m)=(x \otimes_{\mathcal{B}} 1_A \# 1_{\mathcal{H}}) \otimes_{(A \# \mathcal{H})^e} a \cdot m.
     \end{align*}
     We want to show that  $\tilde{\gamma}_{(X,M)}$ is $\mathcal{H}_{par}$-balanced. Recall the $A \# \mathcal{H}$-bimodule structure of $X \otimes_{\mathcal{B}} A \# \mathcal{H}$ given by Proposition \ref{p_ArtG-Mod-XtensorArtG}. Let $h \in \mathcal{H}$, thus
     \begin{align*}
         \tilde{\gamma}_{(X,M)}(x \cdot [h], &a\otimes_{A^e}m)  \\
         &= (x \cdot [h] \otimes_{\mathcal{B}} 1_A \# 1_{\mathcal{H}}) \otimes_{(A \# \mathcal{H})^e} a \cdot m \\
         &= (x \cdot [h_{(1)}]e_{S(h_{(2)})} \otimes_{\mathcal{B}} 1_A \# 1_{\mathcal{H}}) \otimes_{(A \# \mathcal{H})^e} a \cdot m \\ 
         &= (x \cdot [h_{(1)}] \otimes_{\mathcal{B}} e_{S(h_{(2)})} \triangleright (1_A \# 1_{\mathcal{H}})) \otimes_{(A \# \mathcal{H})^e} a \cdot m \\ 
         (\text{by } \eqref{e_ehbysmash})&= \big(x \cdot [h_{(1)}] \otimes_{\mathcal{B}} (S(h_{(2)}) \cdot 1_A) \# 1_{\mathcal{H}} \big) \otimes_{(A \# \mathcal{H})^e} a \cdot m \\ 
         &= \big(x \cdot [h_{(1)}] \otimes_{\mathcal{B}}(1_A \# S(h_{(2)}))(1_A \# h_{(3)}) \big)\otimes_{(A \# \mathcal{H})^e} a \cdot m \\ 
         &= 1_A \# S(h_{(1)}) \cdot \big(x \otimes_{\mathcal{B}}1_A \# 1_{\mathcal{H}} \big) \cdot (1_A \# h_{(2)})\otimes_{(A \# \mathcal{H})^e} a \cdot m \\ 
         (\text{by Lemma }\ref{l_envl})  &= \big(x \otimes_{\mathcal{B}}1_A \# 1_{\mathcal{H}} \big) \otimes_{(A \# \mathcal{H})^e} (1_A \# h_{(2)}) \cdot (a \cdot m) \cdot (1_A \# S(h_{(1)})) \\ 
         &= \big(x \otimes_{\mathcal{B}}1_A \# 1_{\mathcal{H}} \big) \otimes_{(A \# \mathcal{H})^e} (1_A \# h_{(2)})(a \# 1_{\mathcal{H}}) \cdot m \cdot (1_A \# S(h_{(1)})) \\ 
         &= \big(x \otimes_{\mathcal{B}}1_A \# 1_{\mathcal{H}} \big) \otimes_{(A \# \mathcal{H})^e} (h_{(2)} \cdot a \# h_{(3)}) \cdot m \cdot (1_A \# S(h_{(1)})) \\
         &= \big(x \otimes_{\mathcal{B}}1_A \# 1_{\mathcal{H}} \big) \otimes_{(A \# \mathcal{H})^e} (h_{(2)} \cdot a\# 1_{\mathcal{H}}) (1_A\# h_{(3)}) \cdot m \cdot (1_A \# S(h_{(1)})) \\
         &= \big(x \otimes_{\mathcal{B}}1_A \# 1_{\mathcal{H}} \big) \otimes_{(A \# \mathcal{H})^e} (h_{(1)} \cdot a\# 1_{\mathcal{H}}) (1_A\# h_{(2)}) \cdot m \cdot 1_A \# S(h_{(3)}) \\ 
         &= \big(x \otimes_{\mathcal{B}}1_A \# 1_{\mathcal{H}} \big) \otimes_{(A \# \mathcal{H})^e} ([h_{(1)}] \triangleright a\# 1_{\mathcal{H}}) \cdot ([h_{(2)}] \triangleright m)  \\ 
         &= \tilde{\gamma}_{(X,M)}(x , ([h_{(1)}] \triangleright a)\otimes_{A^e}([h_{(2)}] \cdot m)) \\
         &= \tilde{\gamma}_{(X,M)}(x ,[h] \triangleright (a\otimes_{A^e}m)).
     \end{align*}
     Then, the following map is well-defined
     \begin{align*}
         \gamma_{(X,M)} : X \otimes_{\mathcal{H}_{par}} (A\otimes_{A^e}M) & \to (X \otimes_{\mathcal{B}} A \#\mathcal{H})  \otimes_{(A \# \mathcal{H})^e}M \\ 
         x \otimes_{\mathcal{H}_{par}} (a\otimes_{A^e}m) &\mapsto (x \otimes_{\mathcal{B}} 1_A \# 1_{\mathcal{H}}) \otimes_{(A \# \mathcal{H})^e} a \cdot m.
     \end{align*}
     In order to obtain its inverse map we fix an $m \in M$ and define the map
     \begin{align*}
         \tilde{\psi}_{X,m}: X \times A \# \mathcal{H} & \to  X \otimes_{\mathcal{H}_{par}} (A\otimes_{A^e}M) \\ 
         (x, a \# h)&\mapsto x \otimes_{\mathcal{H}_{par}}(1_A \otimes_{A^e} (a \# h) \cdot m).
     \end{align*}
     Then, $\tilde{\psi}_{X,m}$ is $\mathcal{B}$-balanced. Indeed,
     \begin{align*}
         \tilde{\psi}_{X,m}(x \cdot e_k, a \# h)    &=  x \cdot e_k \otimes_{\mathcal{H}_{par}}(1_A \otimes_{A^e} (a \# h) \cdot m) \\
         &=  x \otimes_{\mathcal{H}_{par}} e_k  \triangleright (1_A \otimes_{A^e} (a \# h) \cdot m) \\
         (\text{by Lemma \ref{l_teneq} } (v)\text{ and }(ii))&=  x \otimes_{\mathcal{H}_{par}} \big( k\cdot 1_A \otimes_{A^e}  (a \# h) \cdot m \big)  \\
         &=  x \otimes_{\mathcal{H}_{par}} \big( 1_A\otimes_{A^e}  (k\cdot 1_A  \# 1_{\mathcal{H}})\cdot((a \# h) \cdot m) \big)  \\
         &=  x \otimes_{\mathcal{H}_{par}} \big( 1_A \otimes_{A^e}  ((k\cdot 1_A )a \# h) \cdot m \big)  \\
         &=\tilde{\psi}_{X,m}(x, (k\cdot 1_A )a \# h) \\
        (\text{by }\eqref{e_ehbysmash}) &=\tilde{\psi}_{X,m}(x, e_k \triangleright (a \# h)).
     \end{align*}
     Then, the following  map is well-defined
     \begin{align*}
         \psi_{X,m}: X \otimes_{\mathcal{B}} A \# \mathcal{H} & \to  X \otimes_{\mathcal{H}_{par}} (A\otimes_{A^e}M)\\ 
         x \otimes_{\mathcal{B}} (a\# h) &\mapsto x \otimes_{\mathcal{H}_{par}}(1_A \otimes_{A^e} (a \# h)\cdot m).
     \end{align*}
     So, we can define the map
     \begin{align*}
         \tilde{\psi}_{(X,M)}: (X \otimes_{\mathcal{B}} A \# \mathcal{H})  \times M  & \to  X \otimes_{\mathcal{H}_{par}} (A\otimes_{A^e}M)\\ 
         (x \otimes_{\mathcal{B}} a \# h, m) &\mapsto \psi_{X,m}(x \otimes_{\mathcal{B}} a \# h)=x \otimes_{\mathcal{H}_{par}}(1_A \otimes_{A^e} (a\# h) \cdot m),
     \end{align*}
     which is $(A \# \mathcal{H})^e$-balanced. Indeed, for the left action, we have
     \begin{align*}
         &\tilde{\psi}_{(X,M)}((b \# k) \cdot (x \otimes_\mathcal{B} (a \# h)), m)  \\ 
         &= \tilde{\psi}_{(X,M)}(x \cdot [S(k_{(1)})] \otimes_\mathcal{B} (b \# k_{(2)})(a \# h), m) \\
         &= x \cdot [S(k_{(1)})] \otimes_{\mathcal{H}_{par}} \bigl( 1_A \otimes_{A^e} (b \# k_{(2)})(a \# h) \cdot m \bigr) \\ 
         &= x \cdot [S(k_{(1)})] \otimes_{\mathcal{H}_{par}} \bigl( b \otimes_{A^e} (1_A \# k_{(2)})(a \# h) \cdot m \bigr) \\ 
         &= x \cdot [S(k_{(1)})] \otimes_{\mathcal{H}_{par}} \bigl( 1_A \otimes_{A^e} (1_A \# k_{(2)})(a \# h) \cdot m \cdot (b \# 1_{\mathcal{H}})\bigr) \\ 
         &= x  \otimes_{\mathcal{H}_{par}} [S(k_{(1)})] \triangleright \bigl( 1_A \otimes_{A^e} (1_A \# k_{(2)})(a \# h) \cdot m \cdot (b \# 1_{\mathcal{H}})\bigr) \\ 
         &= x  \otimes_{\mathcal{H}_{par}} \bigl( [S(k_{(1)})] \triangleright  1_A \otimes_{A^e} [S(k_{(2)})] \triangleright ((1_A \# k_{(3)})(a \# h) \cdot m \cdot (b \# 1_{\mathcal{H}}))\bigr) \\ 
         &= x  \otimes_{\mathcal{H}_{par}} \Bigl( S(k_{(1)}) \cdot  1_A \otimes_{A^e} (1_A \# S(k_{(2)}))(1_A \# k_{(3)})(a \# h) \cdot m \cdot (b \# 1_{\mathcal{H}})(1_A \# k_{(4)})\Bigr) \\ 
         &= x  \otimes_{\mathcal{H}_{par}} \Bigl( S(k_{(1)}) \cdot  1_A \otimes_{A^e} \big((S(k_{(2)}) \cdot 1_A \# (S(k_{(3)})k_{(4)}))(a \# h) \cdot m \cdot (b \# 1_{\mathcal{H}})(1_A \# k_{(5)}) \big) \Bigr) \\ 
         &= x  \otimes_{\mathcal{H}_{par}} \Bigl( S(k_{(1)}) \cdot  1_A \otimes_{A^e} \big((S(k_{(2)}) \cdot 1_A \# \varepsilon(k_{(3)})1_{\mathcal{H}})(a \# h) \cdot m \cdot (b \# 1_{\mathcal{H}})(1_A \# k_{(4)}) \big)\Bigr) \\ 
         &= x  \otimes_{\mathcal{H}_{par}} \Bigl( S(k_{(1)}) \cdot  1_A \otimes_{A^e} \big((S(k_{(2)}) \cdot 1_A \# 1_{\mathcal{H}}) (a \# h) \cdot m \cdot (b \# 1_{\mathcal{H}})(1_A \# k_{(3)})\big)\Bigr) \\ 
         &= x  \otimes_{\mathcal{H}_{par}} \Bigl( (S(k_{(1)}) \cdot  1_A)(S(k_{(2)}) \cdot 1_A) \otimes_{A^e} \big( (a \# h) \cdot m \cdot (b \# 1_{\mathcal{H}})(1_A \# k_{(3)})\big)\Bigr) \\ 
         &= x  \otimes_{\mathcal{H}_{par}} \Bigl( (S(k_{(1)}) \cdot  1_A) \otimes_{A^e} \big( (a \# h) \cdot m \cdot (b \# 1_{\mathcal{H}})(1_A \# k_{(2)})\big)\Bigr) \\ 
         &= x  \otimes_{\mathcal{H}_{par}} \Bigl( 1_A \otimes_{A^e} \big( (a \# h) \cdot m \cdot (b \# 1_{\mathcal{H}})(1_A \# k_{(2)})((S(k_{(1)}) \cdot  1_A) \# 1_{\mathcal{H}})\big)\Bigr) \\ 
         &= x  \otimes_{\mathcal{H}_{par}} \Bigl( 1_A \otimes_{A^e} \big( (a \# h) \cdot m \cdot (b \# 1_{\mathcal{H}})(1_A \# k_{(1)})((S(k_{(2)}) \cdot  1_A) \# 1_{\mathcal{H}})\big)\Bigr) \\ 
         &= x  \otimes_{\mathcal{H}_{par}} \Bigl( 1_A \otimes_{A^e} \big( (a \# h) \cdot m \cdot (b \# 1_{\mathcal{H}})(k_{(1)}\cdot (S(k_{(2)}) \cdot 1_A) \# k_{(3)})\big)\Bigr) \\
         &= x  \otimes_{\mathcal{H}_{par}} \Bigl( 1_A \otimes_{A^e} \big( (a \# h) \cdot m \cdot (b \# 1_{\mathcal{H}})(k_{(1)} \cdot 1_A \# k_{(2)})\big)\Bigr) \\ 
         &= x  \otimes_{\mathcal{H}_{par}} \Bigl( 1_A \otimes_{A^e} \big( (a \# h) \cdot m \cdot (b \# k)\big)\Bigr) \\
         &= \tilde{\psi}_{(X,M)}((x \otimes_\mathcal{B} (a \# h)), m \cdot (b \# k) ),
     \end{align*}
    and, for the right action
    \begin{align*}
        \tilde{\psi}_{(X,M)}((x \otimes_\mathcal{B} (a \# h))\cdot (b \# k), m) 
        &=  \tilde{\psi}_{(X,M)}(x \otimes_\mathcal{B} (a \# h)(b \# k), m) \\
        &= x \otimes_{\mathcal{H}_{par}}(1_A \otimes_{A^e} (a \# h)(b \# k) \cdot m) \\ 
        &= x \otimes_{\mathcal{H}_{par}}(1_A \otimes_{A^e} (a \# h)\cdot ((b \# k) \cdot m)) \\
        &=\tilde{\psi}_{(X,M)}((x \otimes_{\mathcal{B}} a \# h),  (b \# k )\cdot m).
    \end{align*}
     Thus, the map
     \begin{align*}
         \psi_{(X,M)} :  (X \otimes_{\mathcal{B}} A \# \mathcal{H})  \otimes_{(A \# \mathcal{H})^e} M  & \to  X \otimes_{\mathcal{H}_{par}} (A\otimes_{A^e}M) \\ 
         (x \otimes_{\mathcal{B}} a \# h) \otimes_{(A \# \mathcal{H})^e}  m &\mapsto x \otimes_{\mathcal{H}_{par}}(1_A \otimes_{A^e} (a\# h) \cdot m)
     \end{align*}
     is well-defined. Observe that $\gamma_{(X,M)}$ and $\psi_{(X,M)}$ are mutual inverses since
     \begin{align*}
         \psi_{(X,M)}\big( \gamma_{(X,M)}(x \otimes_{\mathcal{H}_{par}}(a\otimes_{A^e}m))\big) &=  \psi_{(X,M)}\big( (x \otimes_{\mathcal{B}} 1_A \# 1_{\mathcal{H}}) \otimes_{(A \# \mathcal{H})^e} a \cdot m\big) \\ 
         &=x \otimes_{\mathcal{H}_{par}}(1_A \otimes_{A^e} a \cdot m) \\
         &=x \otimes_{\mathcal{H}_{par}}(a \otimes_{A^e} m)
     \end{align*}
     and
     \begin{align*}
         \gamma_{(X,M)}\big( \psi_{(X,M)}((x \otimes_{\mathcal{B}} a \# h) \otimes_{(A \# \mathcal{H})^e} m) \big)&= \gamma_{(X,M)} \big(x \otimes_{\mathcal{H}_{par}}(1_A \otimes_{A^e} a \# h \cdot m) \big) \\
         &=  (x \otimes_{\mathcal{B}} 1_A \# 1_{\mathcal{H}}) \otimes_{(A \# \mathcal{H})^e} a \# h  \cdot m \\
         &=(x \otimes_{\mathcal{B}} a \# h) \otimes_{(A \# \mathcal{H})^e} m.
     \end{align*}

     Let $X'$ be another right $\mathcal{H}_{par}$-module and $M'$ be a $A \# \mathcal{H}$-bimodule, $f: X \to X'$ a map of right $\mathcal{H}_{par}$-modules and $g: M \to M'$ a map of $A \# \mathcal{H}$-bimodules. Then, the following diagram commutes
     \[\begin{tikzcd}
         {X \otimes_{\mathcal{H}_{par}} (A\otimes_{A^e}M)} & {X' \otimes_{\mathcal{H}_{par}} (A\otimes_{A^e}M')} \\
         {(X \otimes_{\mathcal{B}} A \# \mathcal{H})  \otimes_{(A \# \mathcal{H})^e}M} & {(X' \otimes_{\mathcal{B}} A \# \mathcal{H})  \otimes_{(A \# \mathcal{H})^e}M'}
         \arrow["{\overline{(f,g)}}", from=1-1, to=1-2]
         \arrow["{\underline{(f,g)}}", from=2-1, to=2-2]
         \arrow["{\gamma_{(X,M)}}"', from=1-1, to=2-1]
         \arrow["{\gamma_{(X',M')}}"', from=1-2, to=2-2]
     \end{tikzcd}\]
     where, $\overline{(f,g)}=f \otimes_{\mathcal{H}_{par}}(1_A \otimes_{A^e} g)$ and $\underline{(f,g)}=(f \otimes_{\mathcal{B}}1_A\# 1_{\mathcal{H}})\otimes_{(A \# \mathcal{H})^e}g$. Indeed,

     \begin{align*}
         \gamma_{(X',M')} \left( \overline{(f,g)}(x \otimes_{\mathcal{H}_{par}}(a \otimes_{A^e} m)) \right)
         &=\gamma_{(X,M)} \left(f(x) \otimes_{\mathcal{H}_{par}}(a \otimes_{A^e} g(m))\right) \\
         &= (f(x) \otimes_{\mathcal{B}} 1_A \# 1_{\mathcal{H}}) \otimes_{(A \# \mathcal{H})^e} a \cdot g(m) \\
         &= (f(x) \otimes_{\mathcal{B}} 1_A \# 1_{\mathcal{H}}) \otimes_{(A \# \mathcal{H})^e} g(a \cdot m) \\ 
         &= \underline{(f,g)}\left( (x \otimes_{\mathcal{B}} 1_A \# 1_{\mathcal{H}}) \otimes_{(A \# \mathcal{H})^e} a \cdot m \right) \\
         &= \underline{(f,g)}\left(\gamma_{(X,M)}(x \otimes_{\mathcal{H}_{par}}(a \otimes_{A^e} m))  \right).
     \end{align*}
 Thus, $\gamma$ is a natural isomorphism.
 \end{proof}
 
 \begin{lemma} \label{l_AsmashHisoBAsmashH}
     The isomorphism of $K$-modules
     \begin{align*}
         \phi: A \# \mathcal{H}  &\to \mathcal{B} \otimes_{\mathcal{B}}A \# \mathcal{H} \\ 
                 a \# h          &\mapsto 1_{\mathcal{B}} \otimes_{\mathcal{B}} a \# h
     \end{align*}
     is an isomorphism of $A \# \mathcal{H}$-bimodules.
 \end{lemma}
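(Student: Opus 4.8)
The plan is to recognize $\phi$ as the canonical isomorphism $N \cong \mathcal{B} \otimes_{\mathcal{B}} N$ of $K$-modules associated with the unital left $\mathcal{B}$-module $N := A\#\mathcal{H}$, whose inverse sends $b \otimes_{\mathcal{B}} z$ to $b \triangleright z$; here one uses that $1_{\mathcal{B}} = 1_{\mathcal{H}_{par}}$ acts as the identity on $A\#\mathcal{H}$ via \eqref{e_KparG-mod-ArxG}, which already gives that $\phi$ is bijective $K$-linear. Since a bijective homomorphism of bimodules is automatically an isomorphism of bimodules, it only remains to verify that $\phi$ intertwines the two $A\#\mathcal{H}$-bimodule structures: the one on $A\#\mathcal{H}$ given by left and right multiplication and the one on $\mathcal{B} \otimes_{\mathcal{B}} A\#\mathcal{H}$ provided by Proposition~\ref{p_ArtG-Mod-XtensorArtG} with $X = \mathcal{B}$. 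Compatibility with the right action is immediate: by the definition of the right action there, $\phi(a\#h) \cdot (c\#t) = 1_{\mathcal{B}} \otimes_{\mathcal{B}} (a\#h)(c\#t) = \phi\big((a\#h)(c\#t)\big)$.

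For the left action I would reduce to the algebra generators of $A\#\mathcal{H}$. Since $a\#h = (a\#1_{\mathcal{H}})(1_A\#h)$ by Lemma~\ref{lemma from Alvares-Batista}, the elements $a\#1_{\mathcal{H}}$ and $1_A\#h$ generate $A\#\mathcal{H}$ as an algebra; and because the left-intertwining identity $\phi(z\cdot w) = z\cdot\phi(w)$, once valid for $z=z_1$ and $z=z_2$ (for all $w$), passes to $z=z_1z_2$ via $\phi(z_1z_2\cdot w)=z_1\cdot\phi(z_2\cdot w)=z_1\cdot(z_2\cdot\phi(w))$, and since both sides are $K$-linear in $z$, it suffices to check the identity for these generators. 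For $z=a\#1_{\mathcal{H}}$ it is trivial: as $\Delta(1_{\mathcal{H}})=1_{\mathcal{H}}\otimes 1_{\mathcal{H}}$, $S(1_{\mathcal{H}})=1_{\mathcal{H}}$ and $[1_{\mathcal{H}}]=1_{\mathcal{H}_{par}}$, the left-action formula gives $(a\#1_{\mathcal{H}})\cdot(1_{\mathcal{B}}\otimes_{\mathcal{B}} c\#t)=1_{\mathcal{B}}\otimes_{\mathcal{B}}(a\#1_{\mathcal{H}})(c\#t)=\phi\big((a\#1_{\mathcal{H}})(c\#t)\big)$.

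The case $z=1_A\#h$ is where the work lies. The left-action formula yields $(1_A\#h)\cdot(1_{\mathcal{B}}\otimes_{\mathcal{B}} c\#t) = \big(1_{\mathcal{B}}\triangleleft[S(h_{(1)})]\big)\otimes_{\mathcal{B}}(1_A\#h_{(2)})(c\#t)$. The first key step is to show $1_{\mathcal{B}}\triangleleft[S(h_{(1)})] = e_{h_{(1)}}$: from \eqref{e_RightActionB}, $1_{\mathcal{B}}\triangleleft[S(k)] = [S(S(k)_{(1)})][S(k)_{(2)}]$, and since $\mathcal{H}$ is cocommutative one has $S(k)_{(1)}\otimes S(k)_{(2)} = S(k_{(1)})\otimes S(k_{(2)})$ and $S^2=\operatorname{id}$, whence $1_{\mathcal{B}}\triangleleft[S(k)] = [S^2(k_{(1)})][S(k_{(2)})] = [k_{(1)}][S(k_{(2)})] = e_k$. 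The second step moves $e_{h_{(1)}}\in\mathcal{B}$ across the tensor: since by Lemma~\ref{l_BactionB} the algebra $\mathcal{B}$ acts on itself by multiplication (so $1_{\mathcal{B}}\triangleleft e_{h_{(1)}} = e_{h_{(1)}}$), the $\mathcal{B}$-balancing gives $e_{h_{(1)}}\otimes_{\mathcal{B}}(1_A\#h_{(2)})(c\#t) = 1_{\mathcal{B}}\otimes_{\mathcal{B}} e_{h_{(1)}}\triangleright\big((1_A\#h_{(2)})(c\#t)\big)$. Finally I would evaluate the left factor using \eqref{e_ehbysmash} in the form $e_{h_{(1)}}\triangleright Y = \big((h_{(1)}\cdot 1_A)\#1_{\mathcal{H}}\big)Y$, obtaining $\big((h_{(1)}\cdot 1_A)\#h_{(2)}\big)(c\#t)$, and conclude with Lemma~\ref{lemma from Alvares-Batista}(ii), which gives $(h_{(1)}\cdot 1_A)\#h_{(2)} = 1_A\#h$; hence the expression equals $1_{\mathcal{B}}\otimes_{\mathcal{B}}(1_A\#h)(c\#t)=\phi\big((1_A\#h)(c\#t)\big)$, as required.

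The main obstacle I anticipate is purely bookkeeping: keeping the Sweedler indices consistent through the repeated comultiplications, since both the identity $1_{\mathcal{B}}\triangleleft[S(h_{(1)})] = e_{h_{(1)}}$ and the passage across $\otimes_{\mathcal{B}}$ re-comultiply $h_{(1)}$. The cocommutativity of $\mathcal{H}$, the symmetry of the partial action and $S^2=\operatorname{id}$ are precisely what make the telescoping collapse; no deeper difficulty is expected, as all the structural inputs are already available from Proposition~\ref{p_ArtG-Mod-XtensorArtG}, Lemma~\ref{l_BactionB}, \eqref{e_ehbysmash} and Lemma~\ref{lemma from Alvares-Batista}.
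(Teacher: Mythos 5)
Your proof is correct and uses exactly the same key identities as the paper's own argument --- the identification $1_{\mathcal{B}} \triangleleft [S(h_{(1)})] = e_{h_{(1)}}$ (via cocommutativity and $S^2=\operatorname{id}$), the $\mathcal{B}$-balancing of the tensor, \eqref{e_ehbysmash}, and Lemma~\ref{lemma from Alvares-Batista}(ii) --- the only difference being organizational: the paper runs the same chain of identities in a single direct computation on a general triple product $(b\#k)(a\#h)(c\#t)$, which makes your (valid, but unnecessary) reduction to the generators $a\#1_{\mathcal{H}}$ and $1_A\#h$ superfluous. One minor citation point: for $1_{\mathcal{B}} \triangleleft e_{h_{(1)}} = e_{h_{(1)}}$ you invoke Lemma~\ref{l_BactionB}, which concerns the left action $\triangleright$; the right-action version follows from it together with Lemma~\ref{l_HparMODHpar} and $\mathcal{S}|_{\mathcal{B}}=\operatorname{id}_{\mathcal{B}}$ (established in the proof of Lemma~\ref{l_BprojectiveResolution}), or directly as in your first key step, so this is a looseness of reference rather than a gap.
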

 \begin{proof}
      For any $b \# k, a \# h, c\# t \in A \# \mathcal{H}$ we have that
     \begin{align*}
         \phi((b \# k)(a \# h)(c \# t))  &= 1_{\mathcal{B}} \otimes_{\mathcal{B}} (b \# k)(a \# h)(c \# t) \\  
         &= \big(1_{\mathcal{B}} \otimes_{\mathcal{B}} (b \# k)(a \# h)\big) \cdot (c \# t) \\ 
         &= 1_{\mathcal{B}} \otimes_{\mathcal{B}} ((k_{(1)} \cdot 1_A)b \# k_{(2)})(a \# h)) \cdot (c \# t) \\ 
        (\text{by } \eqref{e_ehbysmash}) &= 1_{\mathcal{B}} \otimes_{\mathcal{B}} e_{k_{(1)}} \triangleright ((b \# k_{(2)})(a \# h))\big) \cdot (c \# t) \\ 
         &= 1_{\mathcal{B}} \triangleleft e_{k_{(1)}}\otimes_{\mathcal{B}} (b \# k_{(2)})(a \# h)\big) \cdot (c \# t) \\ 
         &= e_{k_{(1)}}1_{\mathcal{B}}e_{k_{(2)}}\otimes_{\mathcal{B}} (b \# k_{(3)})(a \# h)\big) \cdot (c \# t) \\ 
         &= e_{k_{(1)}}\otimes_{\mathcal{B}} (b \# k_{(2)})(a \# h)\big) \cdot (c \# t) \\ 
         &= \big([k_{(1)}]1_{\mathcal{B}}[S(k_{(2)})]\otimes_{\mathcal{B}} (b \# k_{(3)})(a \# h)\big) \cdot (c \# t) \\ 
         &= \big(1_{\mathcal{B}} \triangleleft [S(k_{(1)})]\otimes_{\mathcal{B}} (b \# k_{(2)})(a \# h)\big) \cdot (c \# t) \\ 
         &= (b \# k) \cdot \big(1_{\mathcal{B}} \otimes_{\mathcal{B}} (a \# h)\big) \cdot (c \# t) \\
         &= (b \# k) \cdot \phi (a \# h) \cdot (c \# t).
     \end{align*}
 \end{proof}
 
 \begin{corollary} \label{c_F2F1F}
     The functors $F_2F_1$ and $F$ are naturally isomorphic.
 \end{corollary}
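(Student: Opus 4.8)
The plan is to deduce the claim by specializing the bifunctor isomorphism of Proposition~\ref{p_inXB} in its first variable and then correcting the output with the bimodule isomorphism of Lemma~\ref{l_AsmashHisoBAsmashH}. No genuinely new computation should be required, since both technical ingredients have already been established; the work is to assemble them and to check naturality of the resulting composite.

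First I would record that, by the very definitions of $F_1$ and $F_2$,
\[
    F_2F_1(M) = \mathcal{B} \otimes_{\mathcal{H}_{par}} (A \otimes_{A^e} M),
\]
where $\mathcal{B}$ is regarded as a right $\mathcal{H}_{par}$-module via \eqref{e_RightActionB}. This is exactly the value at the pair $(\mathcal{B}, M)$ of the first bifunctor appearing in Proposition~\ref{p_inXB}. Hence, fixing $X = \mathcal{B}$ in that proposition yields an isomorphism
\[
    \gamma_{(\mathcal{B},M)} : \mathcal{B} \otimes_{\mathcal{H}_{par}} (A \otimes_{A^e} M) \xrightarrow{\ \sim\ } (\mathcal{B} \otimes_{\mathcal{B}} A \# \mathcal{H}) \otimes_{(A \# \mathcal{H})^e} M,
\]
natural in $M \in (A \# \mathcal{H})^e\text{-}\mathrm{Mod}$, since a natural isomorphism of bifunctors restricts to a natural isomorphism in the remaining argument once the other is fixed.

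Next I would invoke Lemma~\ref{l_AsmashHisoBAsmashH}, which provides an isomorphism $\phi \colon A \# \mathcal{H} \to \mathcal{B} \otimes_{\mathcal{B}} A \# \mathcal{H}$ of $A \# \mathcal{H}$-bimodules, and in particular of right $(A \# \mathcal{H})^e$-modules. As $\phi$ is a fixed morphism independent of $M$, applying the functor $- \otimes_{(A \# \mathcal{H})^e} M$ to $\phi^{-1}$ gives an isomorphism
\[
    \phi^{-1} \otimes_{(A \# \mathcal{H})^e} \mathrm{id}_M \colon (\mathcal{B} \otimes_{\mathcal{B}} A \# \mathcal{H}) \otimes_{(A \# \mathcal{H})^e} M \xrightarrow{\ \sim\ } A \# \mathcal{H} \otimes_{(A \# \mathcal{H})^e} M = F(M),
\]
which is natural in $M$ precisely because $\phi^{-1}$ does not vary with $M$.

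Composing these two displayed natural isomorphisms then produces the desired natural isomorphism $F_2 F_1 \cong F$. The only point requiring attention is the naturality of the composite, but this is immediate: the first map is natural by Proposition~\ref{p_inXB}, while the second is the tensor of a fixed bimodule isomorphism with the identity, whose naturality is clear. Thus I expect essentially no obstacle here; the conceptual content was already absorbed into Proposition~\ref{p_inXB} and Lemma~\ref{l_AsmashHisoBAsmashH}, and this corollary is their formal consequence.
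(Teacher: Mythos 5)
Your proposal is correct and follows essentially the same route as the paper's own proof: both specialize Proposition~\ref{p_inXB} at $X=\mathcal{B}$ and then transport along the bimodule isomorphism $\mathcal{B}\otimes_{\mathcal{B}}A\#\mathcal{H}\cong A\#\mathcal{H}$ of Lemma~\ref{l_AsmashHisoBAsmashH}. Your write-up is merely more explicit than the paper's about why the second isomorphism, being induced by a fixed bimodule map via $-\otimes_{(A\#\mathcal{H})^e}M$, is natural in $M$.
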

 
 \begin{proof}
     Using Proposition \ref{p_inXB} for the particular case $X=\mathcal{B}$ we obtain that the functors
     \[
         \mathcal{B}\otimes_{\mathcal{H}_{par}} (A \otimes_{A^e}-): (A \# \mathcal{H})^e\!\operatorname{-Mod}  \to K\operatorname{-Mod} 
     \]
     and
     \[
         (\mathcal{B} \otimes_{\mathcal{B}} A \# \mathcal{H}) \otimes_{(A \# \mathcal{H})^e}-:(A \# \mathcal{H})^e\!\operatorname{-Mod}  \to K\operatorname{-Mod}
     \] 
     are naturally isomorphic. Clearly, $F_2F_1 = \mathcal{B}\otimes_{\mathcal{H}_{par}} (A \otimes_{A^e}-)$. On the other hand, by Lemma \ref{l_AsmashHisoBAsmashH} we know that $\mathcal{B} \otimes_{\mathcal{B}} A \# \mathcal{H} \cong A \# \mathcal{H}$ as $A \# \mathcal{H}$-bimodules. Therefore, $F \cong (\mathcal{B} \otimes_{\mathcal{B}} A \# \mathcal{H}) \otimes_{(A \# \mathcal{H})^e}-$, whence we get the desired conclusion.
 \end{proof}

 \underline{In all what follows in this section}, we assume that the cocommutative Hopf algebra $\mathcal{H}$ is projective over $K$.
 
 \begin{lemma} \label{l_HparIsBprojective}
    $\mathcal{H}_{par}$ is projective as a left $\mathcal{B}$-module.
 \end{lemma}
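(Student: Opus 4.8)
The plan is to reduce the statement to the direct-summand argument already used in the proof of Lemma~\ref{l_AskewP}, by identifying $\mathcal{H}_{par}$ with a smash product. Recall from \cite{AB2} that there is a partial action of $\mathcal{H}$ on $\mathcal{B}$ for which the multiplication furnishes an algebra isomorphism $\mathcal{H}_{par} \cong \mathcal{B} \# \mathcal{H}$, under which the subalgebra $\mathcal{B}$ is carried to $\mathcal{B} \# 1_{\mathcal{H}}$. The first step is to observe that this is an isomorphism of left $\mathcal{B}$-modules, where $\mathcal{B}$ acts on $\mathcal{H}_{par}$ by left multiplication and on $\mathcal{B} \# \mathcal{H}$ by $b \cdot (c \# h) = (b \# 1_{\mathcal{H}})(c \# h) = bc \# h$. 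This compatibility is immediate from the fact that the isomorphism is an algebra map sending $\mathcal{B}$ to $\mathcal{B} \# 1_{\mathcal{H}}$, so that left multiplication on $\mathcal{H}_{par}$ by an element of $\mathcal{B}$ corresponds to left multiplication on $\mathcal{B} \# \mathcal{H}$ by the matching element of $\mathcal{B} \# 1_{\mathcal{H}}$. Thus it suffices to prove that $\mathcal{B} \# \mathcal{H}$ is projective as a left $\mathcal{B}$-module.

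For this I would repeat the first paragraph of the proof of Lemma~\ref{l_AskewP}, now with $\mathcal{B}$ playing the role of $A$. Since $\mathcal{H}$ is projective over $K$ it is a direct summand of a free $K$-module, so tensoring with $\mathcal{B}$ shows that $\mathcal{B} \otimes \mathcal{H}$ is projective as a left $\mathcal{B}$-module, with $\mathcal{B}$ acting on the first tensor factor. The element $e := 1_{\mathcal{B}} \otimes 1_{\mathcal{H}}$ is idempotent for the smash-product multiplication on $\mathcal{B} \otimes \mathcal{H}$, and right multiplication by $e$ is a left $\mathcal{B}$-linear projection of $\mathcal{B} \otimes \mathcal{H}$ onto $(\mathcal{B} \otimes \mathcal{H})e = \mathcal{B} \# \mathcal{H}$. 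Hence $\mathcal{B} \# \mathcal{H}$ is a direct summand of the projective left $\mathcal{B}$-module $\mathcal{B} \otimes \mathcal{H}$, and is therefore itself projective.

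Transporting this conclusion back along the isomorphism $\mathcal{H}_{par} \cong \mathcal{B} \# \mathcal{H}$ gives that $\mathcal{H}_{par}$ is projective as a left $\mathcal{B}$-module. The only point that I expect to require genuine care, and which I would present most explicitly, is the compatibility of the two left $\mathcal{B}$-module structures under the smash-product isomorphism; once that is in place, the remainder is exactly the idempotent direct-summand argument of Lemma~\ref{l_AskewP}. Note that symmetry of the partial action of $\mathcal{H}$ on $\mathcal{B}$ plays no role here, since right multiplication by $e$ is left $\mathcal{B}$-linear for an arbitrary partial action.
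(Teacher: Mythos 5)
Your proposal is correct and follows essentially the same route as the paper: the paper also transports the problem along the isomorphism $\hat{\pi}\colon \mathcal{H}_{par} \to \mathcal{B} \# \mathcal{H}$, checking left $\mathcal{B}$-linearity by the computation $\hat{\pi}(e_h) = e_h \# 1_{\mathcal{H}}$ (the explicit verification you correctly flag as the one point needing care), and then exhibits $\mathcal{B} \# \mathcal{H}$ as a direct summand of the projective left $\mathcal{B}$-module $\mathcal{B} \otimes \mathcal{H}$ via the inclusion and the map $x \mapsto x(1_{\mathcal{B}} \otimes 1_{\mathcal{H}})$, which is exactly your right-multiplication-by-the-idempotent projection.
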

 
 \begin{proof}
  It was proved  in \cite[Theorem 4.8]{ALVES2015137}  that the partial action $(\ref{e_LeftActionB})$ of $\mathcal{H}$ on $\mathcal{B}$ is such that $\hat{\pi}:\mathcal{H}_{par} \to \mathcal{B} \# \mathcal{H}$ is an isomorphism of algebras induced by the partial representation 
     \begin{align*}
         \pi : \mathcal{H}   &\to \mathcal{B} \# \mathcal{H} \\ 
                 h           &\mapsto  1_{\mathcal{B}} \# h.
     \end{align*}
     Recall that the structure of the left $\mathcal{B}$-module $\mathcal{H}_{par}$ is just the induced by the natural inclusion $\mathcal{B} \hookrightarrow \mathcal{H}_{par}$. On the other hand, the structure of $B \# \mathcal{H}$ as a left $\mathcal{B}$-module is determined by the morphism of algebras $\phi_0: \mathcal{B} \to \mathcal{B} \# \mathcal{H}$ such that $w \mapsto w \# 1_{\mathcal{H}}$. Let $h \in \mathcal{H}$ and $x \in \mathcal{H}_{par}$. Then,
     \begin{align*}
         \hat{\pi}(e_h x) 
         &= \hat{\pi}(e_h) \hat{\pi}(x) \\ 
         &= (1 \# h_{(1)})(1 \# S(h_{(2)})) \hat{\pi}(x) \\ 
         &= \bigl( h_{(1)} \cdot 1_{\mathcal{B}} \# h_{(2)}S(h_{(3)}) \bigr)  \hat{\pi}(x) \\ 
         &= \bigl( h \cdot 1_{\mathcal{B}} \# 1_{\mathcal{H}} \bigr)  \hat{\pi}(x) \\ 
         &= \bigl( e_h \# 1_{\mathcal{H}} \bigr)  \hat{\pi}(x) \\ 
         &= e_h \cdot  \hat{\pi}(x).
     \end{align*}
     Therefore, $\hat{\pi}$ is an isomorphism of left $\mathcal{B}$-modules. Therefore, it is enough to prove that $\mathcal{B} \# \mathcal{H}$ is projective as a left $\mathcal{B}$-module. Using the Lemma \ref{lemma:tensorProjectives} for $X=R=\mathcal{B}$, $Y= \mathcal{H}$ and $S = K$ we obtain that $\mathcal{B} \otimes \mathcal{H}$ is projective as a left $\mathcal{B}$-module. Consider the following maps of algebras
 
     \begin{minipage}{.45\textwidth}
         \begin{align*}
             \iota: \mathcal{B} \# \mathcal{H} &\to \mathcal{B} \otimes \mathcal{H}  \\
              z &\mapsto z,
         \end{align*}
     \end{minipage}
     \begin{minipage}{.45\textwidth}
         \begin{align*}
             \#: \mathcal{B} \otimes \mathcal{H}  &\to \mathcal{B} \# \mathcal{H}  \\
              x &\mapsto x(1_A \otimes 1_{\mathcal{H}}).
         \end{align*}
     \end{minipage}%
 
     \smallskip
 
 \noindent Furthermore, the above maps are morphisms of left $\mathcal{B}$-modules such that $\# \circ \iota = id_{\mathcal{B} \# \mathcal{H}}$. Thus, $\mathcal{B} \# \mathcal{H}$ is a direct summand of $\mathcal{B} \otimes \mathcal{H}$, whence $\mathcal{B} \# \mathcal{H}$ is projective as a left $\mathcal{B}$-module.
 \end{proof}
 
 By Lemma \ref{l_HparIsBprojective} and Proposition \ref{p_DMSA} we obtain the following proposition
 
 \begin{proposition} \label{p_ProjHparmodulesAreBproj}
     Any projective left $\mathcal{H}_{par}$-module is projective as a left $\mathcal{B}$-module.
 \end{proposition}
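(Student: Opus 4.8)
The plan is to recognize this as a direct instance of the transfer principle for projectivity along a ring homomorphism, namely Proposition~\ref{p_DMSA}, so that essentially no new computation is required. The relevant ring homomorphism is the natural inclusion $f: \mathcal{B} \hookrightarrow \mathcal{H}_{par}$, under which the left $\mathcal{B}$-module structure on $\mathcal{H}_{par}$ is, by definition, the one restricted from its regular left module structure over itself. Thus I would set $R = \mathcal{B}$ and $S = \mathcal{H}_{par}$ in the notation of Proposition~\ref{p_DMSA}.

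The key step is to verify the single hypothesis of Proposition~\ref{p_DMSA}, that $S$ be projective as a left $R$-module. This is precisely the content of Lemma~\ref{l_HparIsBprojective}, which asserts that $\mathcal{H}_{par}$ is projective as a left $\mathcal{B}$-module. (That lemma carries the genuine work: it identifies $\mathcal{H}_{par}$ with $\mathcal{B} \# \mathcal{H}$ via the isomorphism $\hat{\pi}$ from \cite{ALVES2015137}, checks that $\hat{\pi}$ is $\mathcal{B}$-linear, and exhibits $\mathcal{B} \# \mathcal{H}$ as a direct summand of the projective $\mathcal{B}$-module $\mathcal{B} \otimes \mathcal{H}$.)

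Granting Lemma~\ref{l_HparIsBprojective}, I would simply invoke Proposition~\ref{p_DMSA} with the inclusion $f: \mathcal{B} \hookrightarrow \mathcal{H}_{par}$ to conclude that every projective left $\mathcal{H}_{par}$-module is projective as a left $\mathcal{B}$-module, which is the assertion. The only point requiring mild care is bookkeeping on sidedness: Proposition~\ref{p_DMSA}, Lemma~\ref{l_HparIsBprojective}, and the statement to be proved are all phrased for \emph{left} modules and for the same map $f$, so the hypotheses match verbatim. I do not anticipate any genuine obstacle here, since all the substantive effort has been front-loaded into the preceding lemma; this proposition is the formal harvest of that effort.
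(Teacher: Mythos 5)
Your proposal is correct and coincides with the paper's own argument: the paper derives Proposition~\ref{p_ProjHparmodulesAreBproj} exactly by combining Lemma~\ref{l_HparIsBprojective} (projectivity of $\mathcal{H}_{par}$ as a left $\mathcal{B}$-module) with the transfer principle of Proposition~\ref{p_DMSA} applied to the inclusion $\mathcal{B} \hookrightarrow \mathcal{H}_{par}$. Your attention to the sidedness bookkeeping is appropriate and matches the paper's usage.
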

 
 \begin{lemma} \label{l_BprojectiveResolution}
     Any projective right $\mathcal{H}_{par}$-module is a projective right $\mathcal{B}$-module. Thus, any projective resolution of $\mathcal{B}$ in \textbf{Mod}-$\mathcal{H}_{par}$ is a projective resolution in \textbf{Mod}-$\mathcal{{B}}$.
 \end{lemma}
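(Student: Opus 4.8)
The plan is to mirror on the right side the left-module development that culminates in Proposition~\ref{p_ProjHparmodulesAreBproj}, and the bridge between the two sides is the algebra isomorphism $\mathcal{S}\colon\mathcal{H}_{par}\to\mathcal{H}_{par}^{op}$ of \eqref{e_Spar} (equivalently, an algebra anti-automorphism of $\mathcal{H}_{par}$ with $\mathcal{S}([h])=[S(h)]$) together with the category isomorphism $\mathcal{H}_{par}$-\textbf{Mod} $\cong$ \textbf{Mod}-$\mathcal{H}_{par}$ of Lemma~\ref{l_HparMODHpar}. The point that makes the transport work is that $\mathcal{S}$ fixes $\mathcal{B}$ pointwise: since $\mathcal{S}$ reverses products and $\mathcal{H}$ is cocommutative (so that $S^2=\mathrm{id}$ and $\sum h_{(1)}\otimes h_{(2)}=\sum h_{(2)}\otimes h_{(1)}$), one computes $\mathcal{S}(e_h)=\mathcal{S}\big([h_{(1)}][S(h_{(2)})]\big)=[S^2(h_{(2)})][S(h_{(1)})]=[h_{(2)}][S(h_{(1)})]=e_h$. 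As $\mathcal{B}$ is generated by the $e_h$, this shows $\mathcal{S}|_{\mathcal{B}}=\mathrm{id}_{\mathcal{B}}$; moreover $\mathcal{B}$ is commutative by Lemma~\ref{l_Bprop}(iv), so left and right $\mathcal{B}$-modules are literally the same data and projectivity is insensitive to the side.

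Now I would take a projective right $\mathcal{H}_{par}$-module $P$. By Lemma~\ref{l_HparMODHpar}, $\mathcal{S}$ turns $P$ into a left $\mathcal{H}_{par}$-module $\widetilde P$ with $y\triangleright x:=x\triangleleft\mathcal{S}(y)$, and since this correspondence is part of a category isomorphism it preserves projective objects, so $\widetilde P$ is a projective left $\mathcal{H}_{par}$-module. Proposition~\ref{p_ProjHparmodulesAreBproj} then gives that $\widetilde P$ is projective as a left $\mathcal{B}$-module. For $b\in\mathcal{B}$ the left $\mathcal{B}$-action on $\widetilde P$ is $b\triangleright x=x\triangleleft\mathcal{S}(b)=x\triangleleft b$, which is exactly the restriction to $\mathcal{B}$ of the original right $\mathcal{H}_{par}$-action of $P$. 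Reading this identity through the identification of left and right $\mathcal{B}$-modules afforded by commutativity, it says precisely that $P$ is projective as a right $\mathcal{B}$-module, which is the first assertion.

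For the second assertion I would simply restrict scalars along the inclusion $\mathcal{B}\hookrightarrow\mathcal{H}_{par}$. A projective resolution $\cdots\to P_1\to P_0\to\mathcal{B}\to 0$ in \textbf{Mod}-$\mathcal{H}_{par}$ remains exact as a complex of $K$-modules, each $P_i$ is projective over $\mathcal{B}$ by the first part, and the restricted right $\mathcal{B}$-action on the augmentation term $\mathcal{B}$ is the regular one, since for $w\in\mathcal{B}$ one has $b\triangleleft w=\mathcal{S}(w)\triangleright b=w\triangleright b=wb$ by Lemma~\ref{l_BactionB}. Hence the restricted complex is a projective resolution of $\mathcal{B}$ in \textbf{Mod}-$\mathcal{B}$.

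I expect the main obstacle to be the bookkeeping around $\mathcal{S}|_{\mathcal{B}}=\mathrm{id}_{\mathcal{B}}$, namely matching the left $\mathcal{B}$-action on $\widetilde P$ with the right $\mathcal{B}$-action on $P$; everything hinges on $\mathcal{S}$ fixing $\mathcal{B}$, which rests on cocommutativity ($S^2=\mathrm{id}$) and on the commutativity of $\mathcal{B}$. An alternative, equally viable route avoids $\widetilde P$ altogether: one shows directly that $\mathcal{H}_{par}$ is projective as a \emph{right} $\mathcal{B}$-module by transporting the left projectivity of Lemma~\ref{l_HparIsBprojective} through $\mathcal{S}$, which intertwines left multiplication by $b$ with right multiplication by $\mathcal{S}(b)=b$, and then applies Proposition~\ref{p_DMSA} to the inclusion $\mathcal{B}\hookrightarrow\mathcal{H}_{par}$ on the right.
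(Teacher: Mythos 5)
Your proposal is correct and takes essentially the same route as the paper's proof: convert the projective right $\mathcal{H}_{par}$-module to a projective left one via $\mathcal{S}$ and Lemma~\ref{l_HparMODHpar}, compute $\mathcal{S}(e_h)=e_h$ (using cocommutativity and $S^2=\mathrm{id}$) to get $\mathcal{S}|_{\mathcal{B}}=\mathrm{id}_{\mathcal{B}}$, apply Proposition~\ref{p_ProjHparmodulesAreBproj}, and use commutativity of $\mathcal{B}$ to pass back to right modules. Your explicit verification via Lemma~\ref{l_BactionB} that the restricted right $\mathcal{B}$-action on the augmentation term is the regular one is a detail the paper leaves implicit, but it is the same argument.
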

 
 \begin{proof}
     Let $X$ be a projective right $\mathcal{H}_{par}$-module. Then, by Lemma \ref{l_HparMODHpar} we have that $X$ is a projective left $\mathcal{H}_{par}$-module with action
     \[
         z \triangleright x := x \triangleleft \mathcal{S}(z),\, \forall z \in \mathcal{H}_{par}.
     \]
     Observe that by the cocommutativity of $\mathcal{H}$ we have that
     \[
         \mathcal{S}(e_h) = \mathcal{S}([h_{(1)}][S(h_{(2)})]) =[S^2(h_{(2)})][S([h_{(1)}])] = [h_{(1)}][S([h_{(2)}])] = e_h,
     \]
     and since $\mathcal{B}$ is commutative we conclude that $\mathcal{S}|_{\mathcal{B}}=id_{\mathcal{B}}$. Now by Proposition \ref{p_ProjHparmodulesAreBproj} we have that $X$ is projective as a left $\mathcal{B}$-module with action
     \[
         w \triangleright x := x \triangleleft \mathcal{S}(w) =  x \triangleleft w.
     \]
     Using again that $\mathcal{B}$ is commutative we have that any right $\mathcal{B}$-module is a left $\mathcal{B}$-module with the natural action. Thus, $X$ is projective as right $\mathcal{B}$-module.
 \end{proof}
 
 \begin{proposition} \label{p_F1sendPtoF2acylic}
     $F_1$ sends projective $(A \# \mathcal{H})$-bimodules to left $F_2$-acyclic modules.
 \end{proposition}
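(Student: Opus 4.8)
The plan is to unwind the definition of left $F_2$-acyclicity and reduce everything, via Proposition~\ref{p_inXB}, to a $\operatorname{Tor}$ computation over $(A\#\mathcal{H})^e$ in which the projectivity of the bimodule can be exploited directly. Since the left derived functors of $F_2=\mathcal{B}\otimes_{\mathcal{H}_{par}}-$ are the $\operatorname{Tor}^{\mathcal{H}_{par}}_n(\mathcal{B},-)$, saying that $F_1(P)=A\otimes_{A^e}P$ is left $F_2$-acyclic means precisely that $\operatorname{Tor}^{\mathcal{H}_{par}}_n(\mathcal{B},A\otimes_{A^e}P)=0$ for all $n>0$, whenever $P$ is a projective $A\#\mathcal{H}$-bimodule.

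First I would fix a projective resolution $Q_\bullet \to \mathcal{B}$ of $\mathcal{B}$ in $\operatorname{Mod-}\!\mathcal{H}_{par}$, so that by definition $\operatorname{Tor}^{\mathcal{H}_{par}}_n(\mathcal{B},A\otimes_{A^e}P)=H_n\bigl(Q_\bullet \otimes_{\mathcal{H}_{par}}(A\otimes_{A^e}P)\bigr)$. The crucial step is to invoke the bifunctor isomorphism of Proposition~\ref{p_inXB} in the first variable, with the $A\#\mathcal{H}$-bimodule $P$ held fixed in the second. By naturality this upgrades to an isomorphism of complexes
\[
    Q_\bullet \otimes_{\mathcal{H}_{par}}(A\otimes_{A^e}P) \;\cong\; (Q_\bullet \otimes_{\mathcal{B}} A\#\mathcal{H})\otimes_{(A\#\mathcal{H})^e}P.
\]
Writing $C_\bullet := Q_\bullet \otimes_{\mathcal{B}} A\#\mathcal{H}$, the problem becomes the computation of $H_n\bigl(C_\bullet \otimes_{(A\#\mathcal{H})^e}P\bigr)$.

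Next I would show that $C_\bullet$ is a resolution of $A\#\mathcal{H}$ as an $A\#\mathcal{H}$-bimodule. By Lemma~\ref{l_BprojectiveResolution}, the complex $Q_\bullet$ is also a projective resolution of $\mathcal{B}$ in $\operatorname{Mod-}\!\mathcal{B}$; hence $H_n(C_\bullet)=\operatorname{Tor}^{\mathcal{B}}_n(\mathcal{B},A\#\mathcal{H})$. Since $\mathcal{B}$ is free, hence projective, as a module over itself, these groups vanish for $n>0$, while in degree zero Lemma~\ref{l_AsmashHisoBAsmashH} gives $H_0(C_\bullet)\cong \mathcal{B}\otimes_{\mathcal{B}}A\#\mathcal{H}\cong A\#\mathcal{H}$. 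Thus the augmented complex $C_\bullet \to A\#\mathcal{H}\to 0$ is exact. Finally, since $P$ is a projective, and therefore flat, left $(A\#\mathcal{H})^e$-module, the functor $-\otimes_{(A\#\mathcal{H})^e}P$ is exact and so commutes with homology; hence $H_n\bigl(C_\bullet\otimes_{(A\#\mathcal{H})^e}P\bigr)\cong H_n(C_\bullet)\otimes_{(A\#\mathcal{H})^e}P=0$ for every $n>0$, which yields $\operatorname{Tor}^{\mathcal{H}_{par}}_n(\mathcal{B},A\otimes_{A^e}P)=0$, as required.

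I expect the main subtlety to be bookkeeping rather than conceptual. The temptation is to argue instead that each term $C_i=Q_i\otimes_{\mathcal{B}} A\#\mathcal{H}$ is itself a projective $A\#\mathcal{H}$-bimodule, which would require analyzing $\mathcal{H}_{par}\otimes_{\mathcal{B}} A\#\mathcal{H}$ in detail; the flatness argument above sidesteps this entirely, needing only \emph{exactness} of $C_\bullet$ together with flatness of $P$. The two genuine points to verify carefully are that Proposition~\ref{p_inXB} transports the differentials of $Q_\bullet$ correctly, so the isomorphism is one of complexes and not merely a degreewise one, and that one keeps track of which factor carries the flat module when tensoring over the noncommutative ring $(A\#\mathcal{H})^e$ (here $P$ is flat as a left module, matching the right-module structure on the $C_i$).
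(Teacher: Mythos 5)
Your proposal is correct and follows essentially the same route as the paper's own proof: both reduce, via the balancedness of $\operatorname{Tor}$ and the bifunctor isomorphism of Proposition~\ref{p_inXB}, to computing the homology of $(Q_\bullet \otimes_{\mathcal{B}} A \# \mathcal{H}) \otimes_{(A \# \mathcal{H})^e} P$ for a projective resolution $Q_\bullet \to \mathcal{B}$ in $\operatorname{Mod-}\!\mathcal{H}_{par}$, then invoke Lemma~\ref{l_BprojectiveResolution} to identify $H_n(Q_\bullet \otimes_{\mathcal{B}} A \# \mathcal{H})$ with $\operatorname{Tor}_n^{\mathcal{B}}(\mathcal{B}, A \# \mathcal{H})$, which vanishes for $n \geq 1$, and use flatness of the projective module $P$ to conclude. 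Your two explicit cautions (that naturality upgrades the isomorphism to one of complexes, and that the side of the $(A \# \mathcal{H})^e$-module structures must match) are exactly the points the paper's proof uses implicitly, so there is no gap.
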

 \begin{proof}
     We have to see that
     \[
         (L_nF_2)(A \otimes_{A^e}P) = 0, \, \, \forall n \geq 1,
     \]
     for any projective $(A \# \mathcal{H})^e$-module $P$. Now observe that
     \begin{align*}
         (L_nF_2)(A \otimes_{A^e}P)
         &= L_n(\mathcal{B} \otimes_{\mathcal{H}_{par}} -)(A \otimes_{A^e}P) \\
         &\cong L_n(- \otimes_{\mathcal{H}_{par}} (A \otimes_{A^e}P))(\mathcal{B}) \\
        (\text{by Proposition \ref{p_inXB}}) &\cong L_n((- \otimes_{\mathcal{B}} A\# \mathcal{H}) \otimes_{(A \# \mathcal{H})^e}P)(\mathcal{B}).
     \end{align*}
     Let $Q_{\bullet} \to \mathcal{B}$ a projective resolution of $\mathcal{B}$ in \textbf{Mod}-$\mathcal{H}_{par}$. Then,
     \[
         (L_nF_2)(A \otimes_{A^e}P) \cong L_n((- \otimes_{\mathcal{B}} A\# \mathcal{H}) \otimes_{(A \# \mathcal{H})^e}P)(\mathcal{B}) = H_n \left(  (Q_{\bullet}\otimes_{\mathcal{B}} A\# \mathcal{H}) \otimes_{(A \# \mathcal{H})^e}P \right).
     \]
     Observe that if the complex $(Q_{\bullet}\otimes_{\mathcal{B}} A\# \mathcal{H})$ is exact for all $n \geq 1$ then the complex $(Q_{\bullet}\otimes_{\mathcal{B}} A\# \mathcal{H}) \otimes_{(A \# \mathcal{H})^e}P$ is exact for all $n \geq 1$ since $P$ is projective as $(A \# \mathcal{H})^e$-module, and so
     \[
         H_n \left(  (Q_{\bullet}\otimes_{\mathcal{B}} A\# \mathcal{H}) \otimes_{(A \# \mathcal{H})^e}P \right) = 0, \forall n \geq 1,
     \]
     which is exactly what we want. Therefore, it is enough to show that $(Q_{\bullet}\otimes_{\mathcal{B}} A\# \mathcal{H})$ is exact for all $n \geq 1$. Recall that $(Q_{\bullet}\otimes_{\mathcal{B}} A\# \mathcal{H})$ is exact in $n$ if, and only if,
    \[
         H_n(Q_{\bullet}\otimes_{\mathcal{B}} A\# \mathcal{H})=0.
    \] 
     Notice that by Lemma \ref{l_BprojectiveResolution} we have that $Q_{\bullet} \to \mathcal{B}$ is also a projective resolution of $\mathcal{B}$ in \textbf{Mod}-$\mathcal{B}$, therefore
     \[
         H_n(Q_{\bullet}\otimes_{\mathcal{B}} A\# \mathcal{H})= \operatorname{Tor}_n^{\mathcal{B}}(\mathcal{B}, A \# \mathcal{H})= \left\{\begin{matrix}
             0 & \text{ if } n \geq 1 \\ 
             A \# \mathcal{H} & \text{ if } n=0.
             \end{matrix}\right.
     \]
 \end{proof}
 
 \begin{theorem} \label{t_SSHH}
    Let $\mathcal{H}$ be a cocommutative Hopf $K$-algebra, such that $\mathcal{H}$ is projective as a $K$-module. If $\cdot:\mathcal{H} \otimes A \to A$ is a symmetric partial action of $\mathcal{H}$ on a unital algebra $A,$ then for any $(A \# \mathcal{H})^e$-module $M$ there exist a first quadrant homological spectral sequence $E^r$ such that
     \[
         E^2_{p,q} = \operatorname{Tor}_p^{\mathcal{H}_{par}}(\mathcal{B},H_q(A,M) ) \Rightarrow H_{p+q}(A \#\mathcal{H}, M). 
     \]
 \end{theorem}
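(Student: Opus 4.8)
The plan is to apply Grothendieck's spectral sequence \cite[Theorem 10.48]{RotmanAnInToHoAl} to the composite of the two right exact additive covariant functors
\[
    F_1 = A \otimes_{A^e} - : (A \# \mathcal{H})^e\!\operatorname{-Mod} \to \mathcal{H}_{par}\!\operatorname{-Mod}, \qquad F_2 = \mathcal{B} \otimes_{\mathcal{H}_{par}} - : \mathcal{H}_{par}\!\operatorname{-Mod} \to K\!\operatorname{-Mod}.
\]
All three categories in play are module categories, hence have enough projectives, so the left derived functors $L_\bullet F_1$, $L_\bullet F_2$ and $L_\bullet(F_2F_1)$ are all defined. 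Grothendieck's theorem has two hypotheses, and both have already been secured in the preceding results: the composite $F_2F_1$ is naturally isomorphic to the functor $F = A\#\mathcal{H} \otimes_{(A\#\mathcal{H})^e} -$ whose derived functors are the Hochschild homology of the smash product (Corollary~\ref{c_F2F1F}), and $F_1$ carries projective $(A\#\mathcal{H})$-bimodules to $F_2$-acyclic objects (Proposition~\ref{p_F1sendPtoF2acylic}). That $F_1$ genuinely lands in $\mathcal{H}_{par}\!\operatorname{-Mod}$, so that $F_2$ may be applied to its derived functors, is exactly the content of Proposition~\ref{p_AeMAction}.

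First I would invoke Grothendieck's theorem verbatim: since $F_1$ sends projectives to $F_2$-acyclic objects, for every $(A\#\mathcal{H})^e$-module $M$ there is a first quadrant homological spectral sequence
\[
    E^2_{p,q} = (L_p F_2)\big( (L_q F_1)(M) \big) \Longrightarrow L_{p+q}(F_2 F_1)(M).
\]
It then remains only to identify the two families of terms on the $E^2$-page and the abutment with the quantities in the statement, which is a matter of unwinding definitions together with the results already available.

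For the $E^2$-page I would argue as follows. By Remark~\ref{r_projectiveAskewGmodulesAreAprojectives}, which rests on the hypothesis that $\mathcal{H}$ is projective over $K$ (so that $(A\#\mathcal{H})^e$ is projective over $A^e$ by Lemma~\ref{l_AskewP}, whence any projective $(A\#\mathcal{H})$-bimodule is projective as an $A$-bimodule by Proposition~\ref{p_DMSA}), a projective resolution of $M$ in $(A\#\mathcal{H})^e\!\operatorname{-Mod}$ is simultaneously a projective resolution of $M$ as an $A$-bimodule. Consequently $L_q F_1(M) \cong H_q(A,M)$ as $\mathcal{H}_{par}$-modules. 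Applying $L_p F_2 = \operatorname{Tor}_p^{\mathcal{H}_{par}}(\mathcal{B},-)$ gives
\[
    E^2_{p,q} = \operatorname{Tor}_p^{\mathcal{H}_{par}}\big( \mathcal{B}, H_q(A,M) \big).
\]
For the abutment I would use Corollary~\ref{c_F2F1F}, so that $L_{p+q}(F_2F_1)(M) \cong L_{p+q}F(M) = H_{p+q}(A\#\mathcal{H},M)$ by the definition of $F$, yielding the asserted convergence.

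The genuine difficulty lies entirely in the preparatory results rather than in this final assembly: the substantive work was the bifunctor isomorphism of Proposition~\ref{p_inXB} (from which $F_2F_1 \cong F$ follows) and the acyclicity statement of Proposition~\ref{p_F1sendPtoF2acylic}. Within the present argument the only point demanding care is the identification $L_q F_1(M) \cong H_q(A,M)$: the subtlety is that $L_q F_1$ is computed from a projective resolution of $M$ taken in the category of $A\#\mathcal{H}$-bimodules, whereas $H_q(A,M)$ is defined via a resolution in $A^e\!\operatorname{-Mod}$, and these two computations agree precisely because the projectivity of $\mathcal{H}$ over $K$ forces projective $(A\#\mathcal{H})$-bimodules to remain projective as $A$-bimodules.
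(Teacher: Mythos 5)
Your proposal is correct and follows essentially the same route as the paper's proof: both apply \cite[Theorem 10.48]{RotmanAnInToHoAl} to the composite $F_2F_1$, using Corollary~\ref{c_F2F1F} for $F_2F_1 \cong F$, Proposition~\ref{p_F1sendPtoF2acylic} for the acyclicity hypothesis, and Remark~\ref{r_projectiveAskewGmodulesAreAprojectives} to identify $L_qF_1(M)$ with $H_q(A,M)$. Your explicit remark on why resolutions in $(A\#\mathcal{H})^e\!\operatorname{-Mod}$ also compute Hochschild homology over $A^e$ is exactly the point the paper delegates to that remark, so nothing is missing.
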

 
 \begin{proof}
     Keeping in mind Remark~\ref{r_projectiveAskewGmodulesAreAprojectives} we know that 
     \[ 
        L_q F_1(-)=H_q(A,-),\,\, L_p F_2 (-) = \operatorname{Tor}^{\mathcal{H}_{par}}_p(\mathcal{B},-)\text{ and }L_{p+q}F(-)=H_{p+q}(A \# \mathcal{H}, -). 
     \]
    We also have that $F_1$ and $F_2$ are right exact functors, by Proposition \ref{p_F1sendPtoF2acylic} the functor $F_1$ sends projective $(A \# \mathcal{H})$-bimodules to $F_2$-acyclic modules and by Corollary \ref{c_F2F1F} we have that $F_2 F_1 \cong F$. Thus, by \cite[Theorem 10.48]{RotmanAnInToHoAl} we obtain the desired spectral sequence.
 \end{proof}
 
 \begin{example}\label{ex:separable}
    If $A$ is a separable algebra, then 
    \[
         H_q(A,M) = \left\{\begin{matrix}
             0 & \text{ if } q \geq 1 \\ 
             M/[A,M] & \text{ if } q=0.
             \end{matrix}\right.
     \]
    Therefore, the spectral sequence in Theorem \ref{t_SSHH} collapses on the $p$-axis, and thus we obtain the following isomorphism:
    \[
        H_{n}(A \#\mathcal{H}, M) \cong \operatorname{Tor}_n^{\mathcal{H}_{par}}(\mathcal{B}, M/[A,M]).
    \]
 \end{example}

 \begin{example}\label{ex:group}
    Let $G$ be a group. If $\mathcal{H}=KG$ then the spectral sequence of Theorem \ref{t_SSHC} takes the form
    \[
         E^2_{p,q} = H_p^{par}(G,H_q(A,M) ) \Rightarrow H_{p+q}(A \rtimes G, M), 
     \]
     where $H^{par}_\bullet(G,-):= \operatorname{Tor}_\bullet^{K_{par}G}(B, -)$.
 \end{example}

 \begin{example}
    From \cite{ALVES2015137} we know that there exists a partial action of $\mathcal{H}$ on $\mathcal{B}$ such that $\mathcal{H}_{par} \cong \mathcal{B} \# \mathcal{H}$. Therefore, 
    \[
         E^2_{p,q} = \operatorname{Tor}_p^{\mathcal{H}_{par}}(\mathcal{B},H_q(\mathcal{B},M) ) \Rightarrow H_{p+q}(\mathcal{H}_{par}, M).
     \]
    In particular if $\mathcal{H}=KG$ then the spectral sequence takes the form
    \begin{equation*}
        E_{p,q}^2=H_p^{par}(G, H_q(\mathcal{B},M)) \Rightarrow H_{p+q}(K_{par}G,M).
    \end{equation*}
    Observe that $\mathcal{B}^e$ is generated as a $K$-algebra by the set of idempotent $\{e_g \otimes e_h : g,h \in G \}$. Therefore, $\mathcal{B}^e$ is a Von Neumann regular algebra, and consequently, $\mathcal{B}$ is flat as a $\mathcal{B}^e$-module. Thus, the above spectral sequence collapses on the $p$-axis, and we obtain the following isomorphism
    \[
        H_n^{par}(G, M/[\mathcal{B},M]) \cong H_{n}(K_{par}G,M).
    \]
    The above isomorphism generalizes the Mac Lane isomorphism (see for example \cite[7.4.2]{loday2013cyclic}) in the sense that if $M$ is a $G$-group. Then, 
    \[
        H_{n}(KG,M) = H_{\bullet}(K_{par}G,M) \cong H_\bullet^{par}(G, M) =  H_\bullet(G, M),
    \]
    as it is shown in \cite{MDEJ}.
 \end{example}


\section{Cohomology of the partial smash product}\label{sec:cohomology}
Now we proceed to show the existence of a dual cohomological spectral sequence for the Hochschild cohomology. \underline{Recall that} $\mathcal{H}$ is a cocommutative Hopf $K$-algebra, $A$ a unital $K$-algebra and 
 \begin{align*}
     \cdot : \mathcal{H} \otimes A &\to A \\
             h \otimes a &\mapsto h \cdot a
 \end{align*}
 a symmetric partial action of $\mathcal{H}$ on $A$.
 
\begin{proposition} \label{p_HomHparStructure}
    Let $M$ be an $A \# \mathcal{H}$-bimodule. Then, $\operatorname{Hom}_{A^e}(A,M)$ is an $\mathcal{H}_{par}$-module, with the action determined by
    \begin{equation} \label{eq: definition Hpar structure of Hom}
        ([h] \triangleright f)(a) = [h_{(1)}] \triangleright f([S(h_{(2)})]\triangleright a).
    \end{equation}
\end{proposition}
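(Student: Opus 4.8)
The plan is to exhibit the displayed formula as a partial representation of $\mathcal{H}$ on the $K$-module $\operatorname{Hom}_{A^e}(A,M)$ and then invoke the category isomorphism of Proposition~\ref{prop:CorollaryFromAlves-Batista-Vercruyse} to read off the $\mathcal{H}_{par}$-module structure. Concretely, I would set $\pi:\mathcal{H}\to\operatorname{End}_K(\operatorname{Hom}_{A^e}(A,M))$, $\pi_h(f)(a):=[h_{(1)}]\triangleright f([S(h_{(2)})]\triangleright a)$, where the inner action is the $\mathcal{H}_{par}$-action \eqref{e_Hpar-modA} on $A$ and the outer one is the $\mathcal{H}_{par}$-action \eqref{e_Hpar-modM} on $M$. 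This is precisely the cohomological dual of the construction of Proposition~\ref{p_AeMAction}, and it has the shape of the conjugation formula of Proposition~\ref{p_parrep}, now with the post-composition governed by the module $M$ and the pre-composition by the module $A$.

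The first and main technical step is well-definedness: I must check that $\pi_h(f)$ is again a morphism of $A$-bimodules, i.e. that $\pi_h(f)\in\operatorname{Hom}_{A^e}(A,M)$. Expanding $[S(h_{(2)})]\triangleright(bac)=S(h_{(2)})\cdot(bac)$ by (PA2) and using cocommutativity to split the coproduct of $S(h_{(2)})$, then applying the $A^e$-linearity of $f$, and finally the outer action $[h_{(1)}]\triangleright(-)=(1_A\# h_{(1)(1)})(-)(1_A\# S(h_{(1)(2)}))$, one has to move the factors $S(h)_{(1)}\cdot b$ and $S(h)_{(3)}\cdot c$ past the smash-product factors $1_A\# h_{(1)}$ and $1_A\# S(h_{(2)})$. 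This is exactly where Lemma~\ref{l_commutSmash}, hence the symmetry of the partial action, is used, and it is the analogue of the long balancing computation performed in Proposition~\ref{p_AeMAction}. I expect this to be the principal obstacle, as it is the only place the symmetric hypothesis enters and it requires careful Sweedler bookkeeping.

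Next I would record that $\pi$ is $K$-linear, which is immediate since $\pi_h(f)$ is obtained from $h$ by applying the $K$-linear maps $\Delta$, $S$, $[\_]$ and composition of endomorphisms; in particular $\pi_{\lambda h}=\lambda\pi_h$. It then remains to verify the partial-representation axioms. Since $\mathcal{H}$ is cocommutative, by the Remark following the definition of a partial representation it suffices to check (PR1), (PR2) and (PR5). Axiom (PR1) is clear from $S(1_{\mathcal{H}})=1_{\mathcal{H}}$ and $[1_{\mathcal{H}}]=1_{\mathcal{H}_{par}}$. For (PR2) and (PR5) I would argue as in Proposition~\ref{p_parrep}: the nested compositions collapse because the inner action on $A$ and the outer action on $M$ are both algebra-morphism images of $\mathcal{H}_{par}$, so the relations of Lemma~\ref{l_Bprop} together with cocommutativity let the $S$-twisted terms telescope. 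A clean way to organize this is to observe that $h\mapsto[h_{(1)}]\otimes[S(h_{(2)})]$ is a partial representation of $\mathcal{H}$ into $\mathcal{H}_{par}\otimes\mathcal{H}_{par}^{op}$, so that composing with the algebra morphism $\mathcal{H}_{par}\otimes\mathcal{H}_{par}^{op}\to\operatorname{End}_K(\operatorname{Hom}_K(A,M))$ induced by the $\mathcal{H}_{par}$-module structures of $A$ and $M$ yields a partial representation, whose restriction to the invariant subspace $\operatorname{Hom}_{A^e}(A,M)$ is $\pi$. Either way, once $\pi$ is shown to be a partial representation, Proposition~\ref{prop:CorollaryFromAlves-Batista-Vercruyse} supplies the asserted $\mathcal{H}_{par}$-module structure with $[h]\triangleright f=\pi_h(f)$, which completes the proof.
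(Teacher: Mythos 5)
Your proposal is correct and follows essentially the same route as the paper: the paper likewise defines $\pi_h(f)(a):=[h_{(1)}]\triangleright f([S(h_{(2)})]\triangleright a)$ using the $\mathcal{H}_{par}$-module structures \eqref{e_Hpar-modA} on $A$ and \eqref{e_Hpar-modM} on $M$, verifies (PR1), (PR2) and (PR5) by Sweedler computations that exploit the relations of $\mathcal{H}_{par}$ together with cocommutativity (so that (PR3)--(PR4) are automatic), and then concludes via Proposition~\ref{prop:CorollaryFromAlves-Batista-Vercruyse}. The only difference is that you spell out the well-definedness check that $\pi_h(f)$ is again $A^e$-linear (via Lemma~\ref{l_commutSmash} and the symmetry of the partial action), a step the paper leaves implicit, so this is a point of added care rather than a deviation.
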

\begin{proof}
    For $h \in \mathcal{H}$ define $\pi_h: \operatorname{Hom}_{A^e}(A,M) \to \operatorname{Hom}_{A^e}(A,M)$, by $\pi_h(f)(a):= [h_{(1)}] \triangleright f([S(h_{(2)})]\triangleright a)$, for all $f \in \operatorname{Hom}_{A^e}(A,M)$ and $a \in A$. Then,
    \begin{align*}
        \pi: \mathcal{H}    &\to \operatorname{End}_K(\operatorname{Hom}_{A^e}(A,M)) \\
                h           &\mapsto \pi_h
    \end{align*}
    is a partial representation. Indeed, for any $h,t \in \mathcal{H}$ we have
    \begin{align*}
        \left(\pi_t \pi_{h_{(1)}}\pi_{S(h_{(2)})}(f) \right)(a)
        &=[t_{(1)}] \triangleright \left(\left(\pi_{h_{(1)}}\pi_{S(h_{(2)})}(f) \right)([S(t_{(2)})]\triangleright a)\right) \\
        &= [t_{(1)}][h_{(1)}] \triangleright \left(\left(\pi_{S(h_{(3)})}(f) \right)([S(h_{(2)})][S(t_{(2)})]\triangleright a)\right) \\
        &= [t_{(1)}][h_{(1)}][S(h_{(3)})] \triangleright \left(f([h_{(4)}][S(h_{(2)})][S(t_{(2)})]\triangleright a)\right) \\
        &= [t_{(1)}][h_{(1)}][S(h_{(2)})] \triangleright \left(f([h_{(3)}][S(h_{(4)})][S(t_{(2)})]\triangleright a)\right) \\
        &= [t_{(1)}h_{(1)}][S(h_{(2)})] \triangleright \left(f([h_{(3)}][S(t_{(2)}h_{(4)})]\triangleright a)\right) \\
        &= [t_{(1)}h_{(1)}] \triangleright \left(\pi_{S(h_{(2)})}(f)([S(t_{(2)}h_{(3)})]\triangleright a)\right) \\
        &= [t_{(1)}h_{(1)}] \triangleright \left(\pi_{S(h_{(3)})}(f)([S(t_{(2)}h_{(2)})]\triangleright a)\right) \\
        &= \pi_{t h_{(1)}}\pi_{S(h_{(2)})}(f)(a).
    \end{align*}
    Analogously, we have that
    \[
        \pi_{S(h_{(1)})}\pi_{h_{(2)}}\pi_t(f)(a) = \pi_{h_{(1)}}\pi_{S(h_{(2)})t}(f)(a).
    \]
    It is clear that $\pi_{1_{\mathcal{H}}}(f)=f$. Then, $\pi$ is a partial representation and thus $\operatorname{Hom}_{A^e}(A,M)$ is a $\mathcal{H}_{par}$-module.
\end{proof}

Now we can define the following functors, by Proposition \ref{p_HomHparStructure}
\begin{equation}
    G_1:= \operatorname{Hom}_{A^e}(A,-): (A \# \mathcal{H})^e \text{-\textbf{Mod}} \to \mathcal{H}_{par}\text{-\textbf{Mod}}.
\end{equation}
Recall that $\mathcal{B}$ is a left $\mathcal{H}_{par}$-module with the action given by (\ref{e_LeftActionB}), thus we can define
\begin{equation}
    G_2:= \operatorname{Hom}_{\mathcal{H}_{par}}(\mathcal{B},-): \mathcal{H}_{par} \text{-\textbf{Mod}} \to K\text{-\textbf{Mod}}.
\end{equation}
The functor used to compute the Hochschild cohomology of $A \# \mathcal{H}$ with coefficients in $M$ is 
\begin{equation}
    G:= \operatorname{Hom}_{(A \# \mathcal{H})^e}(A \# \mathcal{H},-): (A \# \mathcal{H})^e \text{-\textbf{Mod}} \to K\text{-\textbf{Mod}}.
\end{equation}

Recall that by Lemma \ref{l_HparMODHpar} any left $\mathcal{H}_{par}$-module $X$ is a right $\mathcal{H}_{par}$-module. If $X$ is a left $\mathcal{H}_{par}$-module, then by Proposition \ref{p_ArtG-Mod-XtensorArtG} we have that $X \otimes_{\mathcal{B}} A \# \mathcal{H}$ is a $A \# \mathcal{H}$-bimodule with actions:
\begin{equation}
    a \# h \cdot (x \otimes_{\mathcal{B}} c \# t) := x \cdot [S(h_{(1)})] \otimes_{\mathcal{B}} (a \# h_{(2)})(c \# t) = [h_{(1)}] \cdot x \otimes_{\mathcal{B}} (a \# h_{(2)})(c \# t)
\end{equation}
and
\begin{equation}
    (x \otimes_{\mathcal{B}} c \# t) \cdot a \# h  := x  \otimes_{\mathcal{B}}(c \# t)(a \# h ).
\end{equation}

For the cohomological setting we have the dual version of Remark~\ref{r_projectiveAskewGmodulesAreAprojectives}.

\begin{remark} \label{r_Hochschild_cohomology}
   Recall that the morphism of rings $A^e \to \Lambda^e$ induced by the natural inclusion of $A$ into $\Lambda$ determines the structure of $A^e$-module of $\Lambda^e$, and by Lemma~\ref{l_AskewP}, if $\mathcal{H}$ is projective over $K$, then $\Lambda^e$ is projective as $A^e$-module. Thus, by \cite[Corollary 3.6A]{Lam1998LecturesOM} we conclude that any injective $\Lambda^e$-module is an injective $A^e$-module. Consequently,
    \[
        H^\bullet(A,M) \cong R^\bullet G_1(M),
    \]
    for any $\Lambda^e$-module $M$.
\end{remark}

\begin{proposition} \label{p_bifunctorscohomologyisomorphism}
    Let $M$ be a fixed $A \# \mathcal{H}$-bimodule. Then, the functors
    \[
        \operatorname{Hom}_{\mathcal{H}_{par}}(-,\operatorname{Hom}_{A^e}(A,M)): \mathcal{H}_{par}\text{-\textbf{Mod}} \to K\text{-\textbf{Mod}}
    \]
    and
    \[
        \operatorname{Hom}_{(A \# \mathcal{H})^e}(- \otimes_{\mathcal{B}} A \# \mathcal{H}, M): \mathcal{H}_{par} \text{-\textbf{Mod}} \to K\text{-\textbf{Mod}}
    \]
    are naturally isomorphic. On the other hand, if $X$ is a fixed left $\mathcal{H}_{par}$-module, then the functors
    \[
        \operatorname{Hom}_{\mathcal{H}_{par}}(X,\operatorname{Hom}_{A^e}(A,-)): (A \# \mathcal{H})^e \text{-\textbf{Mod}} \to K\text{-\textbf{Mod}}
    \]
    and
    \[
        \operatorname{Hom}_{(A \# \mathcal{H})^e}(X \otimes_{\mathcal{B}} A \# \mathcal{H}, -): (A \# \mathcal{H})^e \text{-\textbf{Mod}} \to K\text{-\textbf{Mod}}
    \]
    are naturally isomorphic.
\end{proposition}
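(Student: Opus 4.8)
The plan is to prove both assertions at once by exhibiting a single pair of mutually inverse natural transformations between the bifunctors
\[
    \operatorname{Hom}_{\mathcal{H}_{par}}(-,\operatorname{Hom}_{A^e}(A,-))
    \quad\text{and}\quad
    \operatorname{Hom}_{(A\#\mathcal{H})^e}(-\otimes_{\mathcal{B}}A\#\mathcal{H},-),
\]
both viewed on $\mathcal{H}_{par}$-modules in the first variable and $A\#\mathcal{H}$-bimodules in the second; the two statements are then the specializations obtained by fixing $M$, respectively $X$. This is the exact cohomological counterpart of the isomorphism $\gamma_{(X,M)},\psi_{(X,M)}$ of Proposition~\ref{p_inXB}, and I would build it from explicit formulas rather than from abstract adjoint associativity: the left $A\#\mathcal{H}$-action on $X\otimes_{\mathcal{B}}A\#\mathcal{H}$ from Proposition~\ref{p_ArtG-Mod-XtensorArtG} twists the $X$-factor (as $a\#h\cdot(x\otimes_{\mathcal{B}}c\#t)=[h_{(1)}]\triangleright x\otimes_{\mathcal{B}}(a\#h_{(2)})(c\#t)$), so $A\#\mathcal{H}$ does not carry an honest $(\mathcal{B},(A\#\mathcal{H})^e)$-bimodule structure and plain tensor--hom adjunction does not apply directly.

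Concretely, for $F\in\operatorname{Hom}_{(A\#\mathcal{H})^e}(X\otimes_{\mathcal{B}}A\#\mathcal{H},M)$ I would set
\[
    \Phi(F)(x)(a):=F(x\otimes_{\mathcal{B}}a\#1_{\mathcal{H}}),
\]
and for $g\in\operatorname{Hom}_{\mathcal{H}_{par}}(X,\operatorname{Hom}_{A^e}(A,M))$ I would set
\[
    \Psi(g)(x\otimes_{\mathcal{B}}a\#h):=g(x)(a)\cdot(1_A\#h),
\]
using the right $A\#\mathcal{H}$-action on $M$. First I would check that $\Phi(F)(x)$ is $A^e$-linear: writing $bac\#1_{\mathcal{H}}=(b\#1_{\mathcal{H}})(a\#1_{\mathcal{H}})(c\#1_{\mathcal{H}})$ and noting that left and right multiplication by elements $a'\#1_{\mathcal{H}}$ act untwisted (because $[1_{\mathcal{H}}]=1_{\mathcal{H}_{par}}$), this follows from the $(A\#\mathcal{H})^e$-linearity of $F$ and the $A$-bimodule structure \eqref{e_AbiModSt} on $M$. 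That $\Psi(g)$ descends to $\otimes_{\mathcal{B}}$ is the balancedness identity $\Psi(g)((x\cdot e_k)\otimes_{\mathcal{B}}a\#h)=\Psi(g)(x\otimes_{\mathcal{B}}e_k\triangleright(a\#h))$, which I would settle using \eqref{e_ehbysmash} and Lemma~\ref{l_teneq} together with the $\mathcal{H}_{par}$-linearity of $g$ restricted to $\mathcal{B}$. That $\Phi$ and $\Psi$ are mutually inverse is then immediate: $\Phi(\Psi(g))(x)(a)=g(x)(a)$ directly, while
\[
    \Psi(\Phi(F))(x\otimes_{\mathcal{B}}a\#h)=F(x\otimes_{\mathcal{B}}a\#1_{\mathcal{H}})\cdot(1_A\#h)=F(x\otimes_{\mathcal{B}}a\#h),
\]
since $a\#h=(a\#1_{\mathcal{H}})(1_A\#h)$ and $F$ is right $A\#\mathcal{H}$-linear.

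The two genuinely substantial verifications are the remaining linearity conditions, and this is where the Hopf-algebraic content sits. For $\Phi$ I must show that $x\mapsto\Phi(F)(x)$ is $\mathcal{H}_{par}$-linear for the action \eqref{eq: definition Hpar structure of Hom} on $\operatorname{Hom}_{A^e}(A,M)$; for $\Psi$ I must show that $\Psi(g)$ is \emph{left} $A\#\mathcal{H}$-linear (the right linearity being essentially built in, via the $A^e$-linearity of $g(x)$ and $(a\#h)(b\#k)=a(h_{(1)}\cdot b)\#h_{(2)}k$). In both cases the strategy is to move the scalar across $F$ (resp. $g$) using its known linearity, replace $g([h]\triangleright x)$ by $[h]\triangleright g(x)$, expand the twisted left action of $b\#k$ together with the action \eqref{e_Hpar-modM} on $M$, and then collapse the resulting antipode/counit sums. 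I expect this collapse to be the main obstacle: it requires cocommutativity, the relations $e_{h_{(1)}}[h_{(2)}]=[h]$ and $[h_{(1)}]e_{S(h_{(2)})}=[h]$ of Lemma~\ref{l_Bprop}, the commutation identity of Lemma~\ref{l_commutSmash}, and Lemma~\ref{lemma from Alvares-Batista}, threaded through a long Sweedler computation of the same flavour as the $\mathcal{H}_{par}$-balancedness of $\tilde{\gamma}_{(X,M)}$ and the $(A\#\mathcal{H})^e$-balancedness of $\tilde{\psi}_{(X,M)}$ already carried out in Proposition~\ref{p_inXB}. Finally, naturality of $\Phi$ in each variable is a routine diagram chase from the defining formula; fixing $M$ yields the first asserted isomorphism and fixing $X$ yields the second.
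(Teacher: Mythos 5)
Your maps $\Phi$ and $\Psi$ are exactly the paper's $\Lambda_{(X,M)}$ and $\gamma_{(X,M)}$ (your formula $g(x)(a)\cdot(1_A\# h)$ equals the paper's $g(x)(1_A)\cdot(a\# h)$ by $A^e$-linearity of $g(x)$ together with $a\# h=(a\#1_{\mathcal{H}})(1_A\# h)$), and your verification plan --- $A^e$-linearity, balancedness over $\otimes_{\mathcal{B}}$, the two Sweedler-heavy linearity checks via cocommutativity, Lemma~\ref{l_Bprop}, Lemma~\ref{l_commutSmash}, Lemma~\ref{l_teneq} and Lemma~\ref{lemma from Alvares-Batista}, the easy mutual-inverse check, and naturality in each variable --- is precisely the structure of the paper's proof, with the lemmas you cite being exactly the ones the paper threads through its computations. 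Your bifunctor packaging is only cosmetic, since the paper likewise constructs $\gamma_{(X,M)}$ for arbitrary pairs $(X,M)$ and then checks naturality separately in each slot, so this is essentially the same approach and it is correct.
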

\begin{proof}
    Let $X$ be a left $\mathcal{H}_{par}$-module and $M$ be an $(A \# \mathcal{H})$-bimodule. Define
    \begin{align*}
        \gamma_{(X,M)}: \operatorname{Hom}_{\mathcal{H}_{par}}(X,\operatorname{Hom}_{A^e}(A,M)) &\to \operatorname{Hom}_{(A \# \mathcal{H})^e}(X \otimes_{\mathcal{B}} A \# \mathcal{H}, M) \\ 
        f &\mapsto \gamma_{(X,M)}(f),
    \end{align*}
    such that 
    \[
        \gamma_{(X,M)}(f)(x \otimes_{\mathcal{B}} (a \# h)) = f_x(1_A) \cdot (a \# h),
    \]
    where $f \in \operatorname{Hom}_{\mathcal{H}_{par}}(X,\operatorname{Hom}_{A^e}(A,M))$ and $f_x := f(x)$. First, we have to verify that $\gamma_{(X, M)}(f)$ is well-defined. Indeed, notice that
    \begin{align*}
        \gamma_{(X,M)}&(f)(x \cdot e_h \otimes_{\mathcal{B}} (a \# h)) \\
        &=  f_{x \cdot e_h}(1_A) \cdot (a \# h) \\
        &=  f_{e_h \cdot x}(1_A) \cdot (a \# h) \\
        &=  (e_h \triangleright f_{x})(1_A) \cdot (a \# h) \\
        &=  ([h_{(1)}][S(h_{(2)})] \triangleright f_{x})(1_A) \cdot (a \# h) \\
        &=  \Big([h_{(1)}] \triangleright \big([S(h_{(2)})] \triangleright f_{x} \big) \Big)(1_A) \cdot (a \# h) \\
        (\text{by }\eqref{eq: definition Hpar structure of Hom}) &=  [h_{(1)}] \triangleright \Bigl(\big([S(h_{(3)})] \triangleright f_{x}\big)([S(h_{(2)})] \triangleright 1_A)\Bigr) \cdot (a \# h) \\
        &=  [h_{(1)}][S(h_{(2)})] \triangleright f_{x} \big([h_{(3)}][S(h_{(4)})] \triangleright 1_A \big) \cdot (a \# h) \\
        (\flat) &=  \left(  e_{h_{(1)}} \triangleright f_{x}(h_{(2)} \cdot 1_A) \right) \cdot (a \# h) \\
        &=  \left( (1_A \# h_{(1)})(1_A \# S(h_{(2)})) \cdot f_{x}(h_{(3)} \cdot 1_A) \cdot (1_A \# h_{(4)})(1_A \# S(h_{(5)})) \right) \cdot (a \# h) \\
        &=  \left((h_{(1)}\cdot 1_A \# h_{(2)}S(h_{(3)})) \cdot f_{x}(h_{(4)} \cdot 1_A) \cdot (h_{(5)}\cdot 1_A \# h_{(6)}S(h_{(7)})) \right) \cdot (a \# h) \\
        &=  \left((h_{(1)}\cdot 1_A \# 1_{\mathcal{H}}) \cdot f_{x}(h_{(2)} \cdot 1_A) \cdot (h_{(3)}\cdot 1_A \# 1_{\mathcal{H}}) \right) \cdot (a \# h) \\
        &=  f_{x}\left( (h_{(1)} \cdot 1_A)(h_{(2)} \cdot 1_A)(h_{(3)} \cdot 1_A)\right) \cdot (a \# h) \\
        &=  f_{x}(h \cdot 1_A) \cdot (a \# h) \\
        &=  f_{x}(1_A) \cdot (h\cdot 1_A \# 1_{\mathcal{H}})(a \# h) \\
        &=  f_{x}(1_A) \cdot ((h\cdot 1_A) a \# h) \\
    (\flat \flat)    &=  f_{x}(1_A) \cdot (e_h \triangleright a \# h) \\
        &= \gamma_{(X,M)}(f)(x \otimes_{\mathcal{B}} e_h \triangleright (a \# h)),
    \end{align*}
    where the equalities $(\flat)$ and $(\flat \flat)$ are due Lemma \ref{l_teneq} $(ii)$.
    Now we have to verify that $\gamma_{(X,M)}(f)$ is a morphism of $(A \# \mathcal{H})$-bimodules. It is clear that $\gamma_{(X, M)}(f)$ is a morphism of right $A \# \mathcal{H}$-modules, so we only have to see that is a morphism of left $A \# \mathcal{H}$-modules. Observe that
    \begin{align*}
        \gamma_{(X,M)}(f)(b \# t \cdot (x\otimes_{\mathcal{B}} (a \# h)))
        &= \gamma_{(X,M)}(f)([t_{(1)}] \cdot x \otimes_{\mathcal{B}} (b \# t_{(2)})(a \# h)) \\
        &= f_{[t_{(1)}] \cdot x}(1_A) \cdot (b \# t_{(2)})(a \# h) \\
        &= ([t_{(1)}] \triangleright f_{x})(1_A) \cdot (b \# t_{(2)})(a \# h) \\
        &= \left([t_{(1)}] \triangleright f_{x}([S(t_{(2)})] \triangleright 1_A)\right) \cdot (b \# t_{(3)})(a \# h) \\
        &= (1_A \# t_{(1)}) \cdot f_{x}(S(t_{(3)}) \cdot1_A) \cdot (1_A \# S(t_{(2)}))(b \# t_{(4)})(a \# h) \\
        &= (1_A \# t_{(1)}) \cdot f_{x}(S(t_{(5)}) \cdot1_A) \cdot (S(t_{(2)})\cdot b \# S(t_{(3)})t_{(4)})(a \# h) \\
        &= (1_A \# t_{(1)}) \cdot f_{x}(S(t_{(4)}) \cdot1_A) \cdot (S(t_{(2)})\cdot b \# \varepsilon(t_{(3)})1_{\mathcal{H}})(a \# h) \\
        &= (1_A \# t_{(1)}) \cdot f_{x}(S(t_{(3)}) \cdot1_A) \cdot (S(t_{(2)})\cdot b \#1_{\mathcal{H}})(a \# h) \\
        &= (1_A \# t_{(1)}) \cdot f_{x}\big((S(t_{(3)}) \cdot1_A)(S(t_{(2)})\cdot b)\big) \cdot (a \# h) \\
        &= (1_A \# t_{(1)}) \cdot f_{x}\big((S(t_{(2)})\cdot b)\big) \cdot (a \# h) \\
        &= (1_A \# t_{(1)})((S(t_{(2)})\cdot b) \# 1_{\mathcal{H}}) \cdot f_{x}(1_A) \cdot (a \# h) \\
       (\text{by Lemma }\ref{l_commutSmash}) &= (b \# t) \cdot f_{x}(1_A) \cdot (a \# h) \\
       &= (b \# t) \cdot \gamma_{(X,M)}(f)(x\otimes_{\mathcal{B}} (a \# h)).
    \end{align*}
    To show that $\gamma_{(X,M)}$ is an isomorphism of $K$-modules for any $X \in \mathcal{H}_{par}$-\textbf{Mod} and $M \in (A \# \mathcal{H})^e$-\textbf{Mod} we will define its inverse map
    \[
        \Lambda_{(X,M)}:\operatorname{Hom}_{(A \# \mathcal{H})^e}(X \otimes_{\mathcal{B}} A \# \mathcal{H}, M) \to \operatorname{Hom}_{\mathcal{H}_{par}}(X,\operatorname{Hom}_{A^e}(A,M))
    \]
    such that
    \[
        \big(\Lambda_{(X,M)}(f) \big)_x(a):= f(x \otimes_{\mathcal{B}} a \# 1_{\mathcal{H}}),
    \]
    for all $f \in \operatorname{Hom}_{(A \# \mathcal{H})^e}(X \otimes_{\mathcal{B}} A \# \mathcal{H}, M)$. Observe that $ \big(\Lambda_{(X,M)}(f) \big)_x$ is a morphism of $A^e$-modules. Indeed,
    \begin{align*}
        \big(\Lambda_{(X,M)}(f) \big)_x(bac)
        &= f(x \otimes_{\mathcal{B}} bac \# 1_{\mathcal{H}}) \\ 
        &= f(x \otimes_{\mathcal{B}} (b \# 1_{\mathcal{H}})(a \# 1_{\mathcal{H}})(c \# 1_{\mathcal{H}})) \\
        &= f((b \# 1_{\mathcal{H}}) \cdot (x \otimes_{\mathcal{B}} (a \# 1_{\mathcal{H}})) \cdot (c \# 1_{\mathcal{H}})) \\
        &= (b \# 1_{\mathcal{H}}) \cdot f( (x \otimes_{\mathcal{B}} (a \# 1_{\mathcal{H}})))\cdot (c \# 1_{\mathcal{H}}) \\
        &= b \cdot \big(\Lambda_{(X,M)}(f) \big)_x(a)\cdot c.
    \end{align*}
    Now observe that $\Lambda_{(X,M)}(f)$ is a morphism of $\mathcal{H}_{par}$-modules.
    \begin{align*}
        \big( \Lambda_{(X,M)}(f) \big)_{[h] \cdot x}(a)
        &= f([h] \cdot x \otimes_{\mathcal{B}} a \# 1_{\mathcal{H}}) \\
        &= f(e_{h_{(1)}}[h_{(2)}] \cdot x \otimes_{\mathcal{B}} a \# 1_{\mathcal{H}}) \\
        &= f(e_{h_{(1)}} \cdot ([h_{(2)}] \cdot x) \otimes_{\mathcal{B}} a \# 1_{\mathcal{H}}) \\
        &= f(([h_{(2)}] \cdot x) \cdot e_{h_{(1)}} \otimes_{\mathcal{B}} a \# 1_{\mathcal{H}}) \\
        &= f([h_{(2)}] \cdot x \otimes_{\mathcal{B}} e_{h_{(1)}} \triangleright (a \# 1_{\mathcal{H}})) \\ 
        &= f([h_{(3)}] \cdot x \otimes_{\mathcal{B}} (1_A \# h_{(1)})(1_A \# S(h_{(2)}))(a \# 1_{\mathcal{H}})) \\ 
        &= f\left( (1_A \# h_{(1)}) \cdot (x \otimes_{\mathcal{B}} (1_A \# S(h_{(2)}))(a \# 1_{\mathcal{H}}))\right)\\ 
        &= f\left( (1_A \# h_{(1)}) \cdot (x \otimes_{\mathcal{B}} (S(h_{(2)}) \cdot a \# S(h_{(3)})))\right)\\ 
        &= f\left( (1_A \# h_{(1)}) \cdot (x \otimes_{\mathcal{B}} (S(h_{(2)}) \cdot a \# 1_{\mathcal{H}})) \cdot ( 1_A \# S(h_{(3)})) \right)\\ 
        &= (1_A \# h_{(1)}) \cdot f\left( (x \otimes_{\mathcal{B}} (S(h_{(3)}) \cdot a \# 1_{\mathcal{H}}))  \right) \cdot ( 1_A \# S(h_{(2)})) \\ 
        &= [h_{(1)}] \triangleright \left(f(x \otimes_{\mathcal{B}} ([S(h_{(2)})]\triangleright a )\# 1_{\mathcal{H}})  \right) \\ 
    (\text{by } \eqref{eq: definition Hpar structure of Hom})    &= [h_{(1)}] \triangleright \big( \Lambda_{(X,M)}(f) \big)_{x}([S(h_{(2)})] \triangleright  a) \\ 
        &= ([h] \triangleright \big( \Lambda_{(X,M)}(f) \big)_{x})(a).
    \end{align*}
    Thus, $\Lambda_{(X,M)}$ is well-defined. Finally, by direct computations we obtain that $\gamma_{(X,M)}$ and $\Lambda_{(X,M)}$ are mutually inverses:
    \begin{align*}
       ( \Lambda_{(X,M)}(\gamma_{(X,M)}(f)))_x(a)
       &= \gamma_{(X,M)}(f)(x \otimes_{\mathcal{B}} a \# 1_{\mathcal{H}}) \\
       &= f_x(1_A)\cdot (a \# 1_{\mathcal{H}}) \\
       &= f_x(a) \\
    \end{align*}
    and
    \begin{align*}
        ( \gamma_{(X,M)}(\Lambda_{(X,M)}(f)))(x \otimes_{\mathcal{B}}(a \# h))
        &= \big( \Lambda_{(X,M)}(f) \big)_{x}(1_A) \cdot a \# h \\ 
        &= f(x \otimes_{\mathcal{B}} 1_A \# 1_{\mathcal{H}}) \cdot a \# h \\ 
        &= f(x \otimes_{\mathcal{B}} a \# h).
    \end{align*}

    For a fixed $M$ define $\gamma_M(X)=\gamma_{(X,M)}$. Then,
    \[
        \gamma_M: \operatorname{Hom}_{\mathcal{H}_{par}}(-, \operatorname{Hom}_{A^e}(A,M)) \to \operatorname{Hom}_{(A \# \mathcal{H})^e}(- \otimes_{\mathcal{B}} A \# \mathcal{H},M)
    \] 
    is a natural transformation. Indeed, let $\xi: X \to X'$ be a map of left $\mathcal{H}_{par}$-modules. Then, we want to verify that the following diagram commutes
    \[\begin{tikzcd}
        {\operatorname{Hom}_{\mathcal{H}_{par}}(X, \operatorname{Hom}_{A^e}(A,M))} & {\operatorname{Hom}_{\mathcal{H}_{par}}(X', \operatorname{Hom}_{A^e}(A,M))} \\
        {\operatorname{Hom}_{(A \# \mathcal{H})^e}(X \otimes_{\mathcal{B}} A \# \mathcal{H},M)} & {\operatorname{Hom}_{(A \# \mathcal{H})^e}(X' \otimes_{\mathcal{B}} A \# \mathcal{H},M)}
        \arrow["{\xi^*}"', from=1-2, to=1-1]
        \arrow["{\xi_*}"', from=2-2, to=2-1]
        \arrow["{\gamma_{(X,M)} \quad}"', from=1-1, to=2-1]
        \arrow["{\gamma_{(X',M)} \quad}"', from=1-2, to=2-2]
    \end{tikzcd}\]
    where
    \[
        \xi^* := \operatorname{Hom}_{\mathcal{H}_{par}}(\xi,\operatorname{Hom}_{A^e}(A,M)),
    \]
    and
    \[
        \xi_* := \operatorname{Hom}_{(A \# \mathcal{H})^e}(\xi \otimes_{\mathcal{B}} A \# \mathcal{H}, M).
    \]
    Indeed,
    \begin{align*}
        (\gamma_{(X,M)}(\xi^*(f)))(x \otimes_{\mathcal{B}} a \# h)
        &= (\xi^*(f))_x(1_A) \cdot a \# h \\
        &= f_{\xi(x)}(1_A) \cdot a \# h
    \end{align*}
    and
    \begin{align*}
        \xi_*( \gamma_{(X',M)}(f))(x \otimes_{\mathcal{B}} a \# h) 
        &= \gamma_{(X',M)}(f)(\xi(x) \otimes_{\mathcal{B}} a \# h) \\ 
        &= f_{\xi(x)}(1_A) \cdot a \# h.
    \end{align*}
    Thus, the above diagram commutes. Finally, for a fixed $\mathcal{H}_{par}$-module $X$ we define
    \[
        \gamma_X: \operatorname{Hom}_{\mathcal{H}_{par}}(X, \operatorname{Hom}_{A^e}(A,-)) \to \operatorname{Hom}_{(A \# \mathcal{H})^e}(X \otimes_{\mathcal{B}} A \# \mathcal{H},-),
    \] 
    such that $\gamma_{X}(M):=\gamma_{(M,X)}$. Analogously we have that $\gamma_X$ is a natural transformation. Indeed, we have to see that for any $\zeta: M \to M'$ morphism of $A \# \mathcal{H}$-bimodules the following diagram commutes
    \[\begin{tikzcd}
        {\operatorname{Hom}_{\mathcal{H}_{par}}(X, \operatorname{Hom}_{A^e}(A,M))} & {\operatorname{Hom}_{\mathcal{H}_{par}}(X, \operatorname{Hom}_{A^e}(A,M'))} \\
        {\operatorname{Hom}_{(A \# \mathcal{H})^e}(X \otimes_{\mathcal{B}} A \# \mathcal{H},M)} & {\operatorname{Hom}_{(A \# \mathcal{H})^e}(X \otimes_{\mathcal{B}} A \# \mathcal{H},M')}
        \arrow["{\gamma_{(X,M)} \quad}"', from=1-1, to=2-1]
        \arrow["{\gamma_{(X,M')} \quad}"', from=1-2, to=2-2]
        \arrow["{\zeta^*}", from=1-1, to=1-2]
        \arrow["{\zeta_*}", from=2-1, to=2-2]
    \end{tikzcd}\]
    where
    \[
        \zeta^* := \operatorname{Hom}_{\mathcal{H}_{par}}(X,\operatorname{Hom}_{A^e}(A,\zeta)),
    \]
    and
    \[
        \zeta_* := \operatorname{Hom}_{(A \# \mathcal{H})^e}(X \otimes_{\mathcal{B}} A \# \mathcal{H}, \zeta).
    \]
    Observe that
    \begin{align*}
        (\gamma_{(X,M')}(\zeta^*(f)))(x \otimes_{\mathcal{B}} a \# h)
        &= (\zeta^*(f))_x (1_A) \cdot (a \# h) \\
        &= \zeta(f_x(1_A)) \cdot (a \# h)
    \end{align*}
    and
    \begin{align*}
        \zeta_*(\gamma_{(X,M)}(f))(x \otimes_{\mathcal{B}} a \# h)
        &= \zeta(\gamma_{(X,M)}(f)(x \otimes_{\mathcal{B}} a \# h)) \\
        &= \zeta \left(f_x(1_A) \cdot (a \# h)\right) \\
        &= \zeta \left(f_x(1_A)\right)\cdot (a \# h).
    \end{align*}
    Thus, the above diagram commutes.
\end{proof}

\begin{corollary} \label{c_G2G1congG}
    The functors $G_2G_1$ and $G$ are naturally isomorphic.
\end{corollary}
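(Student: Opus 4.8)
The plan is to deduce the corollary directly from Proposition~\ref{p_bifunctorscohomologyisomorphism}, specialized to $X = \mathcal{B}$, combined with the bimodule identification of Lemma~\ref{l_AsmashHisoBAsmashH}. The argument runs in complete parallel to the homological Corollary~\ref{c_F2F1F}, the only difference being that the $\operatorname{Hom}$-version of the bifunctor isomorphism is used in place of the tensor version.

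First I would record that, by definition of the functors, $G_2 G_1 = \operatorname{Hom}_{\mathcal{H}_{par}}(\mathcal{B}, \operatorname{Hom}_{A^e}(A,-))$ as a functor $(A \# \mathcal{H})^e\operatorname{-Mod} \to K\operatorname{-Mod}$, where $\mathcal{B}$ carries the left $\mathcal{H}_{par}$-module structure \eqref{e_LeftActionB}. Taking $X = \mathcal{B}$ in the second assertion of Proposition~\ref{p_bifunctorscohomologyisomorphism} then furnishes a natural isomorphism of functors
\[
    \operatorname{Hom}_{\mathcal{H}_{par}}(\mathcal{B}, \operatorname{Hom}_{A^e}(A,-)) \;\cong\; \operatorname{Hom}_{(A \# \mathcal{H})^e}(\mathcal{B} \otimes_{\mathcal{B}} A \# \mathcal{H}, -),
\]
both regarded as functors on $(A \# \mathcal{H})^e\operatorname{-Mod}$.

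Next I would invoke Lemma~\ref{l_AsmashHisoBAsmashH}, which gives an isomorphism $\mathcal{B} \otimes_{\mathcal{B}} A \# \mathcal{H} \cong A \# \mathcal{H}$ of $A \# \mathcal{H}$-bimodules. Applying the contravariant functor $\operatorname{Hom}_{(A \# \mathcal{H})^e}(-, M)$ to this bimodule isomorphism, and letting $M$ vary, yields a natural isomorphism
\[
    \operatorname{Hom}_{(A \# \mathcal{H})^e}(\mathcal{B} \otimes_{\mathcal{B}} A \# \mathcal{H}, -) \;\cong\; \operatorname{Hom}_{(A \# \mathcal{H})^e}(A \# \mathcal{H}, -) = G.
\]
Composing the two natural isomorphisms delivers $G_2 G_1 \cong G$, as claimed.

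Since both ingredients are already in hand, there is no genuine obstacle; the only point demanding attention is bookkeeping of the module structures. Specifically, I would check that the left $\mathcal{H}_{par}$-module structure on $\mathcal{B}$ assumed in Proposition~\ref{p_bifunctorscohomologyisomorphism} is exactly the one \eqref{e_LeftActionB} entering the definition of $G_2$, and that the bimodule isomorphism of Lemma~\ref{l_AsmashHisoBAsmashH} is natural enough in its role so that post-composition with $\operatorname{Hom}_{(A \# \mathcal{H})^e}(-, M)$ produces an honest natural transformation of functors rather than a mere pointwise isomorphism.
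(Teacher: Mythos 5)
Your proposal is correct and takes essentially the same route as the paper's own proof: both specialize Proposition~\ref{p_bifunctorscohomologyisomorphism} to $X=\mathcal{B}$ and then use Lemma~\ref{l_AsmashHisoBAsmashH} to identify $\mathcal{B}\otimes_{\mathcal{B}}A\#\mathcal{H}$ with $A\#\mathcal{H}$ as $A\#\mathcal{H}$-bimodules, composing the two natural isomorphisms. The bookkeeping you flag (that $\mathcal{B}$ carries the structure \eqref{e_LeftActionB} in the definition of $G_2$, and that the bimodule isomorphism induces a natural isomorphism of $\operatorname{Hom}_{(A\#\mathcal{H})^e}$-functors) is precisely what the paper leaves implicit, so nothing is missing.
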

\begin{proof}
    By Proposition \ref{p_bifunctorscohomologyisomorphism}, taking $X=\mathcal{B}$ we have that the functors 
    \[
        \operatorname{Hom}_{\mathcal{H}_{par}}(\mathcal{B},\operatorname{Hom}_{A^e}(A,-)): (A \# \mathcal{H})^e \text{-\textbf{Mod}} \to K\text{-\textbf{Mod}}
    \]
    and
    \[
        \operatorname{Hom}_{(A \# \mathcal{H})^e}(\mathcal{B} \otimes_{\mathcal{B}} A \# \mathcal{H}, -): (A \# \mathcal{H})^e \text{-\textbf{Mod}} \to K\text{-\textbf{Mod}}
    \]
    are naturally isomorphic. It is clear that $G_2G_1 = \operatorname{Hom}_{\mathcal{H}_{par}}(\mathcal{B},\operatorname{Hom}_{A^e}(A,-))$. On the other hand by Lemma \ref{l_AsmashHisoBAsmashH} we have that $\mathcal{B} \otimes_{\mathcal{B}} A \# \mathcal{H} \cong A \# \mathcal{H}$ as $A \# \mathcal{H}$-bimodules. Thus, $G \cong \operatorname{Hom}_{(A \# \mathcal{H})^e}(\mathcal{B} \otimes_{\mathcal{B}} A \# \mathcal{H}, -)$.
\end{proof}

\underline{In all what follows}, we assume that $\mathcal{H}$ is projective over $K$.

\begin{proposition}\label{prop:G1sendItoD2acylic}
    $G_1$ sends injective $(A \# \mathcal{H})$-bimodules to right $G_2$-acyclic modules.
\end{proposition}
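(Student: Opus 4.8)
The plan is to dualize the argument of Proposition~\ref{p_F1sendPtoF2acylic}, replacing tensor products by Hom and homology by cohomology, and to reuse its key computational output. Concretely, I must show that for every injective $(A \# \mathcal{H})^e$-module $I$ one has $(R^n G_2)(G_1(I)) = 0$ for all $n \geq 1$. Since $R^\bullet G_2(-) = \operatorname{Ext}^\bullet_{\mathcal{H}_{par}}(\mathcal{B},-)$ and $\operatorname{Ext}$ over a ring is balanced, I would compute these derived functors through a projective resolution $P_\bullet \to \mathcal{B}$ of $\mathcal{B}$ in $\mathcal{H}_{par}$-\textbf{Mod}, so that, writing $N := G_1(I) = \operatorname{Hom}_{A^e}(A,I)$,
\[
    (R^n G_2)(G_1(I)) \cong H^n\big(\operatorname{Hom}_{\mathcal{H}_{par}}(P_\bullet, N)\big).
\]

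Next I would invoke the bifunctor isomorphism of Proposition~\ref{p_bifunctorscohomologyisomorphism}. Taking $M = I$ in its first statement, the natural transformation $\gamma_I$ gives, for each $p$, an isomorphism $\operatorname{Hom}_{\mathcal{H}_{par}}(P_p, \operatorname{Hom}_{A^e}(A,I)) \cong \operatorname{Hom}_{(A \# \mathcal{H})^e}(P_p \otimes_{\mathcal{B}} A \# \mathcal{H}, I)$, and naturality in the first variable means these assemble into an isomorphism of cochain complexes. Hence
\[
    (R^n G_2)(G_1(I)) \cong H^n\big(\operatorname{Hom}_{(A \# \mathcal{H})^e}(P_\bullet \otimes_{\mathcal{B}} A \# \mathcal{H}, I)\big).
\]

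It then remains to analyse the complex $P_\bullet \otimes_{\mathcal{B}} A \# \mathcal{H}$, and here I would transplant the computation already made in the proof of Proposition~\ref{p_F1sendPtoF2acylic}. By Proposition~\ref{p_ProjHparmodulesAreBproj} together with the commutativity of $\mathcal{B}$, each $P_p$ is projective as a right $\mathcal{B}$-module, so $P_\bullet \to \mathcal{B}$ is also a projective resolution of $\mathcal{B}$ in \textbf{Mod}-$\mathcal{B}$; consequently $H_n(P_\bullet \otimes_{\mathcal{B}} A \# \mathcal{H}) = \operatorname{Tor}^{\mathcal{B}}_n(\mathcal{B}, A \# \mathcal{H}) = 0$ for $n \geq 1$. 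Augmenting by the isomorphism $\mathcal{B} \otimes_{\mathcal{B}} A \# \mathcal{H} \cong A \# \mathcal{H}$ of Lemma~\ref{l_AsmashHisoBAsmashH}, the complex $\cdots \to P_1 \otimes_{\mathcal{B}} A \# \mathcal{H} \to P_0 \otimes_{\mathcal{B}} A \# \mathcal{H} \to A \# \mathcal{H} \to 0$ is exact. Since $I$ is injective over $(A \# \mathcal{H})^e$, the contravariant functor $\operatorname{Hom}_{(A \# \mathcal{H})^e}(-, I)$ is exact and therefore carries this resolution to an exact cochain complex, forcing $H^n(\operatorname{Hom}_{(A \# \mathcal{H})^e}(P_\bullet \otimes_{\mathcal{B}} A \# \mathcal{H}, I)) = 0$ for $n \geq 1$. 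Combining with the two displays above yields $(R^n G_2)(G_1(I)) = 0$ for $n \geq 1$, which is exactly the $G_2$-acyclicity sought.

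The genuinely delicate points, which I would be careful to state explicitly rather than compute, are the balancing of $\operatorname{Ext}_{\mathcal{H}_{par}}$ that legitimizes using a projective resolution of $\mathcal{B}$ to compute $R^\bullet G_2$, and the verification that the isomorphism $\gamma_I$ of Proposition~\ref{p_bifunctorscohomologyisomorphism} commutes with the induced differentials so as to descend to cohomology; both are formal consequences of naturality and do not require new calculation. Everything substantive—the vanishing of $\operatorname{Tor}^{\mathcal{B}}_{\geq 1}(\mathcal{B}, A \# \mathcal{H})$—has already been secured in the homological section, so no fresh obstacle arises beyond bookkeeping of left- versus right-module structures via Lemma~\ref{l_HparMODHpar}.
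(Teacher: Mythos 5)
Your proposal is correct and follows essentially the same route as the paper's proof: compute $R^\bullet G_2$ via a projective resolution $P_\bullet \to \mathcal{B}$ using the balance of $\operatorname{Ext}$, transport the complex through the natural isomorphism of Proposition~\ref{p_bifunctorscohomologyisomorphism}, reduce to the vanishing of $\operatorname{Tor}^{\mathcal{B}}_{\geq 1}(\mathcal{B}, A \# \mathcal{H})$ via Proposition~\ref{p_ProjHparmodulesAreBproj} (with the left/right switch through $\mathcal{S}|_{\mathcal{B}} = \mathrm{id}$ and commutativity of $\mathcal{B}$), and finish by exactness of $\operatorname{Hom}_{(A \# \mathcal{H})^e}(-,I)$ for injective $I$. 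The only cosmetic difference is that you augment the complex by $\mathcal{B} \otimes_{\mathcal{B}} A \# \mathcal{H} \cong A \# \mathcal{H}$ before applying $\operatorname{Hom}$, whereas the paper argues directly from exactness of $P_\bullet \otimes_{\mathcal{B}} A \# \mathcal{H}$ in degrees $n \geq 1$; both yield the same vanishing.
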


\begin{proof}
    We have to see that
    \[
        (R^nG_2)(\operatorname{Hom}_{A^e}(A,Q)) = 0 \, \forall n \geq 1,
    \]
    for any injective $(A \# \mathcal{H})^e$-module $Q$. Now observe that
    \begin{align*}
        (R^n G_2)(\operatorname{Hom}_{A^e}(A,Q))
        &= R^n(\operatorname{Hom}_{\mathcal{H}_{par}}(\mathcal{B},-))(\operatorname{Hom}_{A^e}(A,Q)) \\
        &\cong R^n(\operatorname{Hom}_{\mathcal{H}_{par}}(-,\operatorname{Hom}_{A^e}(A,Q)))(\mathcal{B}) \\
       (\text{by Proposition \ref{p_bifunctorscohomologyisomorphism}}) &\cong R^n(\operatorname{Hom}_{(A \# \mathcal{H})^e}(- \otimes_{\mathcal{B}} A\# \mathcal{H}, Q))(\mathcal{B}).
    \end{align*}
    Let $P_{\bullet} \to \mathcal{B}$ a projective resolution of $\mathcal{B}$ in $\mathcal{H}_{par}$-\textbf{Mod}. Then,
    \[
        (R^nG_2)(\operatorname{Hom}_{A^e}(A,Q)) \cong H^n \left( \operatorname{Hom}_{(A \# \mathcal{H})^e}(P_\bullet \otimes_{\mathcal{B}} A\# \mathcal{H}, Q)\right).
    \]
    Observe that if the complex $P_{\bullet}\otimes_{\mathcal{B}} A\# \mathcal{H}$ is exact for all $n \geq 1$ then the chain complex $\operatorname{Hom}_{(A \# \mathcal{H})^e}(P_\bullet \otimes_{\mathcal{B}} A\# \mathcal{H}, Q)$ is exact for all $n \geq 1$ since $Q$ is injective as $(A \# \mathcal{H})^e$-module, and so
    \[
        H^n \left( \operatorname{Hom}_{(A \# \mathcal{H})^e}(P_\bullet \otimes_{\mathcal{B}} A\# \mathcal{H}, Q)\right) = 0, \forall n \geq 1,
    \]
    which is exactly what we want. Therefore, it is enough to show that $P_{\bullet}\otimes_{\mathcal{B}} A\# \mathcal{H}$ is exact for all $n \geq 1$, recall that $P_{\bullet}\otimes_{\mathcal{B}} A\# \mathcal{H}$ is exact in $n$ if, and only if,
   \[
        H_n(P_{\bullet}\otimes_{\mathcal{B}} A\# \mathcal{H})=0.
   \] 
    Notice that by Proposition \ref{p_ProjHparmodulesAreBproj} we have that $P_{\bullet} \to \mathcal{B}$ is also a projective resolution of $\mathcal{B}$ in $\mathcal{B}$-\textbf{Mod}, therefore
    \[
        H_n(P_{\bullet}\otimes_{\mathcal{B}} A\# \mathcal{H})= \operatorname{Tor}_n^{\mathcal{B}}(\mathcal{B}, A \# \mathcal{H})= \left\{\begin{matrix}
            0 & \text{ if } n \geq 1 \\ 
            A \# \mathcal{H} & \text{ if } n=0.
            \end{matrix}\right.
    \]
\end{proof}

\begin{theorem} \label{t_SSHC}
    Let $\mathcal{H}$ be a cocommutative Hopf $K$-algebra, which is projective as a $K$-module, and $\cdot:\mathcal{H} \otimes A \to A$ be a symmetric partial action of $\mathcal{H}$  on $A$. Then, for any $(A \# \mathcal{H})^e$-module $M$ there exist a cohomological spectral sequence $E_r$ such that
    \[
        E_2^{p,q} = \operatorname{Ext}^p_{\mathcal{H}_{par}}(\mathcal{B},H^q(A,M) ) \Rightarrow H^{p+q}(A \#\mathcal{H}, M). 
    \]
\end{theorem}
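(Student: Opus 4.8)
The plan is to apply the cohomological Grothendieck spectral sequence for a composite of functors, namely \cite[Theorem 10.47]{RotmanAnInToHoAl}, to the pair $(G_1,G_2)$, dualizing exactly the argument used for Theorem~\ref{t_SSHH}. Recall that $G_1 = \operatorname{Hom}_{A^e}(A,-)$ and $G_2 = \operatorname{Hom}_{\mathcal{H}_{par}}(\mathcal{B},-)$ are covariant and left exact, being $\operatorname{Hom}$ functors, so their right derived functors are defined and the relevant composite is $G_2 G_1$.

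To invoke the theorem I would verify its hypotheses in turn. First, $G_1$ must send injective $(A \# \mathcal{H})$-bimodules to $G_2$-acyclic modules; this is precisely Proposition~\ref{prop:G1sendItoD2acylic}, which was obtained via the bifunctor isomorphism of Proposition~\ref{p_bifunctorscohomologyisomorphism}. Second, I identify the right derived functors of the three functors involved: by Remark~\ref{r_Hochschild_cohomology} (which rests on Lemma~\ref{l_AskewP} together with \cite[Corollary 3.6A]{Lam1998LecturesOM}, ensuring that every injective resolution of $M$ over $(A \# \mathcal{H})^e$ is an injective resolution over $A^e$) we have $R^q G_1(M) \cong H^q(A,M)$; by definition $R^p G_2(-) = \operatorname{Ext}^p_{\mathcal{H}_{par}}(\mathcal{B},-)$; and the Hochschild cohomology of $A \# \mathcal{H}$ is $R^{p+q} G(M) = H^{p+q}(A \# \mathcal{H}, M)$. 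Third, by Corollary~\ref{c_G2G1congG} the functors $G_2 G_1$ and $G$ are naturally isomorphic, so $R^{p+q}(G_2 G_1) \cong R^{p+q} G = H^{p+q}(A \# \mathcal{H}, -)$, which matches the intended abutment.

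Assembling these facts, \cite[Theorem 10.47]{RotmanAnInToHoAl} yields a third-quadrant cohomological spectral sequence with
\[
    E_2^{p,q} = (R^p G_2)(R^q G_1 (M)) = \operatorname{Ext}^p_{\mathcal{H}_{par}}(\mathcal{B}, H^q(A,M)) \Rightarrow R^{p+q}(G_2 G_1)(M) \cong H^{p+q}(A \# \mathcal{H}, M),
\]
which is exactly the claimed statement.

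Since all the substantive work has been carried out in the preceding results, the only real care needed here is to confirm that the hypotheses of the Grothendieck theorem are met for the covariant left exact functors $G_1$ and $G_2$ — in particular that $G_1$ is genuinely left exact as a functor $(A \# \mathcal{H})^e\!\operatorname{-Mod} \to \mathcal{H}_{par}\!\operatorname{-Mod}$, where the target $\mathcal{H}_{par}$-module structure on $\operatorname{Hom}_{A^e}(A,M)$ is the one furnished by Proposition~\ref{p_HomHparStructure}. The main conceptual obstacle, namely the $G_2$-acyclicity of $G_1(Q)$ for injective $Q$, has already been settled in Proposition~\ref{prop:G1sendItoD2acylic}; what remains is purely a matter of feeding these ingredients into the Grothendieck machinery, so I do not expect any serious difficulty in this final step.
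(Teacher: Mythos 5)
Your proposal is correct and follows essentially the same route as the paper's own proof: it assembles Remark~\ref{r_Hochschild_cohomology} (identifying $R^qG_1$ with $H^q(A,-)$), Proposition~\ref{prop:G1sendItoD2acylic} (acyclicity), and Corollary~\ref{c_G2G1congG} ($G_2G_1 \cong G$), then invokes \cite[Theorem 10.47]{RotmanAnInToHoAl}. The only difference is that you spell out the hypothesis-checking slightly more explicitly than the paper does, which is harmless.
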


\begin{proof}
    Recalling Remark~\ref{r_Hochschild_cohomology} we know that 
    \[ 
       R^q G_1(-)=H^q(A,-),\,\, R^p G_2 (-) = \operatorname{Ext}_{\mathcal{H}_{par}}^p(\mathcal{B},-)\text{ and }R^{p+q}G(-)=H^{p+q}(A \# \mathcal{H}, -). 
    \]
   We also have that $G_1$ and $G_2$ are left exact functors, by Proposition \ref{prop:G1sendItoD2acylic} the functor $G_1$ sends injective $(A \# \mathcal{H})$-bimodules to right $G_2$-acyclic modules and by Corollary \ref{c_G2G1congG} we have that $G_2 G_1 \cong G$. Thus, by \cite[Theorem 10.47]{RotmanAnInToHoAl} we obtain the desired spectral sequence.
\end{proof}


\section{Hopf (co)homology based on partial representations}\label{sec:Hopf(co)homology}
 In analogy with the case of groups (see \cite{AlAlRePartialCohomology} and \cite{MMAMDDHKPartialHomology}) we give the following definition:
 \begin{definition} \label{d_partial cohomology}
    Let $\mathcal{H}$ be a cocommutative Hopf algebra. For a left $\mathcal{H}_{par}$-modue $M$, we define the partial Hopf homology of $\mathcal{H}$ with coefficients in $M$ by
    \begin{equation}
        H^{par}_\bullet(\mathcal{H}, M) := \operatorname{Tor}_\bullet^{\mathcal{H}_{par}}(\mathcal{B}, M).
    \end{equation} 
    Analogously, we define the partial Hopf cohomology of $\mathcal{H}$ with coefficients in $M$ by
    \begin{equation}
        H_{par}^\bullet(\mathcal{H}, M) := \operatorname{Ext}^\bullet_{\mathcal{H}_{par}}(\mathcal{B}, M).
    \end{equation} 
 \end{definition}

 Observe that as in the case of groups, the above-defined cohomology differs from the cohomology based on partial actions introduced in \cite{EBAMMT}.

 In view of Definition~\ref{d_partial cohomology} we can reformulate Theorem~\ref{t_SSHH} and Theorem~\ref{t_SSHC} as follows:

 \begin{theorem} \label{t_SS}
    Let $\cdot: \mathcal{H} \otimes A \to A$ be a symmetric partial action of a cocommutative Hopf algebra on $A,$ such that
    $\mathcal{H}$ is projective over $K.$ Then for any $(A \# \mathcal{H})^e$-module $M$ there exists a first quadrant homological spectral sequence $E^r$ such that
    \[
        E^2_{p,q} = H^{par}_p(\mathcal{H},H_q(A,M) ) \Rightarrow H_{p+q}(A \#\mathcal{H}, M),
    \]
    and a third quadrant cohomological spectral sequence $E_r$ such that
    \[
        E_2^{p,q} = H^p_{par}(\mathcal{H},H^q(A,M) ) \Rightarrow H^{p+q}(A \#\mathcal{H}, M). 
    \]
\end{theorem}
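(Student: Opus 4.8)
The plan is to observe that Theorem~\ref{t_SS} is nothing more than a notational reformulation of the two spectral sequences already established in Theorems~\ref{t_SSHH} and \ref{t_SSHC}, obtained by unwinding Definition~\ref{d_partial cohomology}. First I would record that, directly from that definition, for every left $\mathcal{H}_{par}$-module $N$ one has the identities $H^{par}_p(\mathcal{H},N) = \operatorname{Tor}_p^{\mathcal{H}_{par}}(\mathcal{B},N)$ and $H^p_{par}(\mathcal{H},N) = \operatorname{Ext}^p_{\mathcal{H}_{par}}(\mathcal{B},N)$. Everything then reduces to substituting the appropriate module $N$ into these formulas.

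For the homological statement I would invoke Theorem~\ref{t_SSHH}, which under exactly the present hypotheses (a cocommutative Hopf algebra $\mathcal{H}$, projective over $K$, acting symmetrically and partially on $A$) produces a first quadrant spectral sequence with $E^2_{p,q} = \operatorname{Tor}_p^{\mathcal{H}_{par}}(\mathcal{B},H_q(A,M))$ converging to $H_{p+q}(A\#\mathcal{H},M)$. Since $H_q(A,M) \cong L_qF_1(M)$ is computed by the left derived functor of $F_1$, which lands in $\mathcal{H}_{par}\text{-}\mathbf{Mod}$ (see Remark~\ref{r_projectiveAskewGmodulesAreAprojectives}), it carries a canonical $\mathcal{H}_{par}$-module structure. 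Applying the first identity above with $N = H_q(A,M)$ rewrites the $E^2$-page as $H^{par}_p(\mathcal{H},H_q(A,M))$, giving the desired form.

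Dually, for the cohomological statement I would invoke Theorem~\ref{t_SSHC}, whose $E_2$-page is $\operatorname{Ext}^p_{\mathcal{H}_{par}}(\mathcal{B},H^q(A,M))$ converging to $H^{p+q}(A\#\mathcal{H},M)$, and apply the second identity with $N = H^q(A,M)$. Here $H^q(A,M) \cong R^qG_1(M)$ is a $\mathcal{H}_{par}$-module by Remark~\ref{r_Hochschild_cohomology}, since $G_1 = \operatorname{Hom}_{A^e}(A,-)$ takes values in $\mathcal{H}_{par}\text{-}\mathbf{Mod}$ via Proposition~\ref{p_HomHparStructure}. This yields $E_2^{p,q} = H^p_{par}(\mathcal{H},H^q(A,M))$.

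I do not expect any genuine obstacle: all the analytic content---the bifunctor isomorphisms of Propositions~\ref{p_inXB} and \ref{p_bifunctorscohomologyisomorphism}, the acyclicity statements of Propositions~\ref{p_F1sendPtoF2acylic} and \ref{prop:G1sendItoD2acylic}, and the Grothendieck convergence via \cite[Theorem 10.48]{RotmanAnInToHoAl} and \cite[Theorem 10.47]{RotmanAnInToHoAl}---has already been discharged in the proofs of the two earlier theorems. The only point deserving explicit mention is that the $\mathcal{H}_{par}$-module structures on $H_q(A,M)$ and $H^q(A,M)$ implicit in the $E^2$- and $E_2$-terms are precisely the ones that make Definition~\ref{d_partial cohomology} applicable, and this is automatic because these (co)homology groups are the values of the derived functors of $F_1$ and $G_1$, both of which target $\mathcal{H}_{par}\text{-}\mathbf{Mod}$.
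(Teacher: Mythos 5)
Your proposal is correct and coincides with the paper's own treatment: the paper states Theorem~\ref{t_SS} as an immediate reformulation of Theorems~\ref{t_SSHH} and \ref{t_SSHC} via Definition~\ref{d_partial cohomology}, with no separate proof. Your explicit remark that $H_q(A,M)$ and $H^q(A,M)$ carry the requisite $\mathcal{H}_{par}$-module structures because they arise as values of the derived functors of $F_1$ and $G_1$ is exactly the (implicit) point that makes the substitution legitimate.
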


We proceed by showing that the above-defined (co)homology is a generalization of the usual (co)homology for $\mathcal{H}$-modules.

\begin{remark}
    It is easy to see that the identity map $id_{\mathcal{H}}: \mathcal{H} \to \mathcal{H}$ is a partial representation. Thus, by Theorem~\ref{t_Universal Property Partial Actions} we have a surjective homomorphism of algebras $\mathcal{H}_{par} \to \mathcal{H}$ such that $[h] \mapsto h$. Consequently, any left (right) $\mathcal{H}$-module will be a left (right) $\mathcal{H}_{par}$-module.
\end{remark}

 \begin{lemma} \label{l_PHisHprojective}
     Let $P$ be a projective left (right) $\mathcal{H}_{par}$-module. Then, $\mathcal{H} \otimes_{\mathcal{H}_{par}}P$ ($P\otimes_{\mathcal{H}_{par}} \mathcal{H}$) is projective as left (right) $\mathcal{H}$-module.
 \end{lemma}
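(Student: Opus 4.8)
The plan is to recognize $\mathcal{H}\otimes_{\mathcal{H}_{par}}-$ as the extension-of-scalars functor along the surjective algebra homomorphism $q\colon \mathcal{H}_{par}\to\mathcal{H}$, $[h]\mapsto h$, furnished by the remark preceding the lemma. Through $q$ the algebra $\mathcal{H}$ becomes an $(\mathcal{H},\mathcal{H}_{par})$-bimodule, so for every left $\mathcal{H}_{par}$-module $P$ the tensor product $\mathcal{H}\otimes_{\mathcal{H}_{par}}P$ carries a natural left $\mathcal{H}$-module structure, and $P\mapsto \mathcal{H}\otimes_{\mathcal{H}_{par}}P$ is an additive functor from left $\mathcal{H}_{par}$-modules to left $\mathcal{H}$-modules. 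The one point worth pinning down carefully is precisely this: that the left $\mathcal{H}$-module in the statement is the one induced by $q$, so that the functor really is base change along $q$.

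First I would settle the free case. Since tensoring commutes with arbitrary direct sums and $\mathcal{H}\otimes_{\mathcal{H}_{par}}\mathcal{H}_{par}\cong\mathcal{H}$ as left $\mathcal{H}$-modules, one obtains for every index set $I$ a natural isomorphism of left $\mathcal{H}$-modules
\[
    \mathcal{H}\otimes_{\mathcal{H}_{par}}\mathcal{H}_{par}^{(I)}\cong\mathcal{H}^{(I)}.
\]
Hence the functor $\mathcal{H}\otimes_{\mathcal{H}_{par}}-$ carries free $\mathcal{H}_{par}$-modules to free $\mathcal{H}$-modules.

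Now let $P$ be projective, so that $P$ is a direct summand of some free module, say $\mathcal{H}_{par}^{(I)}\cong P\oplus P'$. Applying the additive functor $\mathcal{H}\otimes_{\mathcal{H}_{par}}-$ and using the previous isomorphism yields
\[
    \mathcal{H}^{(I)}\cong\bigl(\mathcal{H}\otimes_{\mathcal{H}_{par}}P\bigr)\oplus\bigl(\mathcal{H}\otimes_{\mathcal{H}_{par}}P'\bigr),
\]
which exhibits $\mathcal{H}\otimes_{\mathcal{H}_{par}}P$ as a direct summand of a free $\mathcal{H}$-module, hence projective. Equivalently, one may invoke that $\mathcal{H}\otimes_{\mathcal{H}_{par}}-$ is left adjoint to the restriction-of-scalars functor along $q$, which is exact, and that a left adjoint of an exact functor preserves projectives. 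The right-module assertion is proved in exactly the same manner, using the $(\mathcal{H}_{par},\mathcal{H})$-bimodule structure on $\mathcal{H}$ and the functor $-\otimes_{\mathcal{H}_{par}}\mathcal{H}$. There is no serious obstacle beyond verifying these standard compatibilities; the content of the lemma is entirely the identification of the functor as base change along $q$.
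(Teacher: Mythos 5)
Your proposal is correct and follows essentially the same route as the paper: the paper likewise writes $P \oplus Q \cong \bigoplus_{i\in I}\mathcal{H}_{par}$ for a projective $P$, tensors with $\mathcal{H}$ over $\mathcal{H}_{par}$, and uses $\mathcal{H}_{par}\otimes_{\mathcal{H}_{par}}\mathcal{H}\cong\mathcal{H}$ together with compatibility of tensor products with direct sums to exhibit $P\otimes_{\mathcal{H}_{par}}\mathcal{H}$ as a direct summand of a free $\mathcal{H}$-module. Your explicit identification of the functor as base change along the surjection $q\colon\mathcal{H}_{par}\to\mathcal{H}$, and the adjunction remark, are accurate framing that the paper leaves implicit, but the substance of the argument is the same.
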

 
 \begin{proof}
     Let $P$ be a projective right $\mathcal{H}_{par}$-modules. Then, there exists a right $\mathcal{H}_{par}$-module $Q$ such that $P \oplus Q \cong \bigoplus_{i \in I} \mathcal{H}_{par}$. Therefore,
     \begin{align*}
         (P \otimes_{\mathcal{H}_{par}}\mathcal{H}) \oplus (Q \otimes_{\mathcal{H}_{par}}\mathcal{H})
         &=(P \oplus Q ) \otimes_{\mathcal{H}_{par}}\mathcal{H} \\
         &\cong \left(\bigoplus_{i \in I} \mathcal{H}_{par}\right) \otimes_{\mathcal{H}_{par}}\mathcal{H} \\
         &\cong \bigoplus_{i \in I} (\mathcal{H}_{par} \otimes_{\mathcal{H}_{par}}\mathcal{H}) \\
         &\cong \bigoplus_{i \in I} \mathcal{H}.
     \end{align*}
     Thus, $P \otimes_{\mathcal{H}_{par}}\mathcal{H}$ is projective a right $\mathcal{H}$-module. Analogously one proves that $\mathcal{H} \otimes_{\mathcal{H}_{par}}P$ is projective as left $\mathcal{H}$-module.
 \end{proof}

 Recall that the co-unit $\varepsilon: \mathcal{H} \to K$ is a homomorphism of algebras, thus we have a homomorphism of algebras $\mathcal{H}_{par} \to K$ such that $[h] \mapsto \varepsilon(h)$.
 
 \begin{lemma} \label{l_HoBFunctors}
     The functors $- \otimes_{\mathcal{H}_{par}} \mathcal{H}$ and $- \otimes_{\mathcal{B}}K$ are naturally isomorphic. Analogously, the functors $\mathcal{H} \otimes_{\mathcal{H}_{par}} -$ and $K \otimes_{\mathcal{B}} -$ are naturally isomorphic.
 \end{lemma}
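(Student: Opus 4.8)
The plan is to reduce both natural isomorphisms to a single module-theoretic fact and then pass to functors by associativity of the tensor product. Write $p\colon\mathcal{H}_{par}\to\mathcal{H}$ for the surjective algebra map $[h]\mapsto h$ from the remark preceding Lemma~\ref{l_PHisHprojective}, and let $\varepsilon\colon\mathcal{H}_{par}\to K$, $[h]\mapsto\varepsilon(h)$, be the augmentation recalled just above; these are exactly the maps through which $\mathcal{H}$ becomes an $\mathcal{H}_{par}$-module and $K$ a $\mathcal{B}$-module. The central claim I would isolate is that
\[
    \mathcal{H}\;\cong\;\mathcal{H}_{par}\otimes_{\mathcal{B}}K \quad\text{(as left $\mathcal{H}_{par}$-modules)}, \qquad
    \mathcal{H}\;\cong\;K\otimes_{\mathcal{B}}\mathcal{H}_{par}\quad\text{(as right $\mathcal{H}_{par}$-modules)}.
\]
Granting this, for a right $\mathcal{H}_{par}$-module $X$ one gets $X\otimes_{\mathcal{H}_{par}}\mathcal{H}\cong X\otimes_{\mathcal{H}_{par}}(\mathcal{H}_{par}\otimes_{\mathcal{B}}K)\cong X\otimes_{\mathcal{B}}K$, using that $\mathcal{H}_{par}$ is an $(\mathcal{H}_{par},\mathcal{B})$-bimodule; dually, for a left module $Y$ one gets $\mathcal{H}\otimes_{\mathcal{H}_{par}}Y\cong(K\otimes_{\mathcal{B}}\mathcal{H}_{par})\otimes_{\mathcal{H}_{par}}Y\cong K\otimes_{\mathcal{B}}Y$. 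Naturality of these composite isomorphisms is inherited from naturality of the associativity isomorphism, so the whole content sits in the displayed module isomorphisms.

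To build, say, the left-module isomorphism I would take $\alpha\colon\mathcal{H}_{par}\otimes_{\mathcal{B}}K\to\mathcal{H}$, $z\otimes\lambda\mapsto\lambda\,p(z)$, with candidate inverse $\beta\colon h\mapsto[h]\otimes 1$. Here $\alpha$ is well defined and left $\mathcal{H}_{par}$-linear because $p|_{\mathcal{B}}=\varepsilon(-)\,1_{\mathcal{H}}$: indeed $p(e_h)=h_{(1)}S(h_{(2)})=\varepsilon(h)1_{\mathcal{H}}$, so $p(zw)=\varepsilon(w)\,p(z)$ for $w\in\mathcal{B}$, which is precisely the $\mathcal{B}$-balancing condition. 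The identity $\alpha\beta=\mathrm{id}_{\mathcal{H}}$ is immediate from $p([h])=h$. The real work is $\beta\alpha=\mathrm{id}$, which on the spanning elements $z=[h^{1}]\cdots[h^{n}]$ amounts to showing
\[
    [h^{1}]\cdots[h^{n}]\otimes_{\mathcal{B}}1 \;=\; [h^{1}\cdots h^{n}]\otimes_{\mathcal{B}}1 .
\]

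This last collapse is the main obstacle, and it is where the defining relations of $\mathcal{H}_{par}$ enter. I would first establish the two reduction identities
\[
    [h][k]=[h\,k_{(2)}]\,e_{S(k_{(1)})} \qquad\text{and}\qquad [h][k]=e_{h_{(1)}}[h_{(2)}k],
\]
deriving the first from relation~(4) in the definition of $\mathcal{H}_{par}$ together with cocommutativity and $S^{2}=\mathrm{id}$ (the second is its mirror, coming from relation~(3)/(5), or may be obtained by applying the anti-isomorphism $\mathcal{S}$ and using $\mathcal{S}|_{\mathcal{B}}=\mathrm{id}_{\mathcal{B}}$ from Lemma~\ref{l_BprojectiveResolution}). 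The first identity places an idempotent $e_{S(k_{(1)})}\in\mathcal{B}$ on the \emph{right}, so tensoring over $\mathcal{B}$ with $K$ replaces it by $\varepsilon(S(k_{(1)}))=\varepsilon(k_{(1)})$ and yields $[h][k]\otimes 1=[hk]\otimes 1$; an induction on the number of factors (reducing the rightmost adjacent pair each time) then gives the displayed collapse. The second identity, with $e_{h_{(1)}}$ on the \emph{left}, does the symmetric job for $K\otimes_{\mathcal{B}}\mathcal{H}_{par}$. Conceptually these computations verify the inclusion $\ker p\subseteq \mathcal{H}_{par}\mathcal{B}^{+}$ (the reverse inclusion being clear from $p|_{\mathcal{B}}=\varepsilon(-)1_{\mathcal{H}}$), i.e.\ that killing the augmentation ideal of $\mathcal{B}$ recovers $\mathcal{H}$; equivalently, one could run the same verification through the isomorphism $\mathcal{H}_{par}\cong\mathcal{B}\#\mathcal{H}$ of Lemma~\ref{l_HparIsBprojective}, where $[h]\mapsto 1_{\mathcal{B}}\#h$ and $e_h\mapsto e_h\#1_{\mathcal{H}}$ make the absorption of the $\mathcal{B}$-part transparent. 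Once the reduction identities are in hand, the remaining checks are routine.
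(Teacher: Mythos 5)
Your proposal is correct, and its computational engine is literally the paper's: the reduction identity $[h][k]=[hk_{(2)}]e_{S(k_{(1)})}$ (obtained from the defining relations of $\mathcal{H}_{par}$ together with Lemma~\ref{l_Bprop}, cocommutativity and $S^2=\operatorname{id}$), followed by absorption of the trailing idempotent into $K$ via $\varepsilon(e_{S(k_{(1)})} \triangleright 1)=\varepsilon(k_{(1)})$, is exactly the manipulation the paper performs when it checks that $(x,h)\mapsto x\cdot[h]\otimes_{\mathcal{B}}1$ is $\mathcal{H}_{par}$-balanced, via $x\cdot[t][h]\otimes 1=x\cdot[th_{(1)}]e_{S(h_{(2)})}\otimes 1=x\cdot[th]\otimes 1$. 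Where you genuinely differ is the architecture. The paper works objectwise: for each $X$ it constructs the pair of mutually inverse balanced maps $\eta_X(x\otimes_{\mathcal{H}_{par}}h)=x\cdot[h]\otimes_{\mathcal{B}}1$ and $\Phi_X(x\otimes_{\mathcal{B}}r)=x\otimes_{\mathcal{H}_{par}}r1_{\mathcal{H}}$, and then verifies naturality by an explicit diagram chase. You instead isolate the single bimodule isomorphism $\mathcal{H}\cong\mathcal{H}_{par}\otimes_{\mathcal{B}}K$ (equivalently, $\ker p=\mathcal{H}_{par}\mathcal{B}^{+}$, where $p([h])=h$) and let the associativity and unit isomorphisms of the tensor product produce the natural isomorphism of functors. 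This buys you one balancedness verification in place of two per functor, naturality for free, and a conceptually sharper statement: $\mathcal{H}$ is recovered from $\mathcal{H}_{par}$ by killing the augmentation ideal of $\mathcal{B}$, the Hopf analogue of $KG\cong K_{par}G\otimes_{B}K$; it also dualizes to the left-module case by your mirror identity $[h][k]=e_{h_{(1)}}[h_{(2)}k]$ (or by applying $\mathcal{S}$ with $\mathcal{S}|_{\mathcal{B}}=\operatorname{id}_{\mathcal{B}}$), matching the paper's second claim. Unwinding your composite
\[
X\otimes_{\mathcal{H}_{par}}\mathcal{H}\;\cong\;X\otimes_{\mathcal{H}_{par}}(\mathcal{H}_{par}\otimes_{\mathcal{B}}K)\;\cong\;(X\otimes_{\mathcal{H}_{par}}\mathcal{H}_{par})\otimes_{\mathcal{B}}K\;\cong\;X\otimes_{\mathcal{B}}K
\]
gives precisely the paper's map $x\otimes_{\mathcal{H}_{par}}h\mapsto x\cdot[h]\otimes_{\mathcal{B}}1$, so the two proofs agree on the nose. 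In a final write-up you should make explicit the two small points your sketch leaves implicit: that $\beta(h)=[h]\otimes 1$ is left $\mathcal{H}_{par}$-linear because it is the inverse of the left-linear $\alpha$, and that checking $\beta\alpha=\operatorname{id}$ on words $[h^{1}]\cdots[h^{n}]$ suffices because such words span $\mathcal{H}_{par}$ as a quotient of the tensor algebra; both are routine, and your reduction identities are correctly derived.
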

 
 \begin{proof}
     Let $X$ be a right $\mathcal{H}_{par}$-module. 
     \begin{align*}
         \tilde{\eta}_X: X \times \mathcal{H} &\to X \otimes_{\mathcal{B}}K \\ 
                         (x,h)           &\mapsto x \cdot [h] \otimes_{\mathcal{B}} 1.
     \end{align*}
     Notice that $\tilde{f}$ is $\mathcal{H}_{par}$-balanced. Indeed,
     \begin{align*}
         \tilde{\eta}_X(x \cdot [t],h)
         &=  x \cdot [t][h] \otimes_{\mathcal{B}}1 \\
         &=  x \cdot [t][h_{(1)}]e_{S(h_{(2)})} \otimes_{\mathcal{B}}1 \\
         &=  x \cdot [th_{(1)}]e_{S(h_{(2)})} \otimes_{\mathcal{B}}1 \\
         &=  x \cdot [th_{(1)}] \otimes_{\mathcal{B}}e_{S(h_{(2)})} \triangleright 1 \\
         &=  x \cdot [th_{(1)}] \otimes_{\mathcal{B}} \varepsilon(S(h_{(2)})) \\
         &=  x \cdot [th_{(1)}] \otimes_{\mathcal{B}} \varepsilon(h_{(2)}) \\
         &=  x \cdot [th_{(1)}\varepsilon(h_{(2)})] \otimes_{\mathcal{B}} 1 \\
         &=  x \cdot [th] \otimes_{\mathcal{B}} 1 \\
         &=  \tilde{\eta}_X(x,th) = \tilde{\eta}_X(x,[t] \triangleright h).
     \end{align*}
     Therefore, the map
     \begin{align*}
         \eta_X: X \otimes_{\mathcal{H}_{par}} \mathcal{H} &\to X \otimes_{\mathcal{B}}K \\ 
                     x \otimes_{\mathcal{H}_{par}} h  &\mapsto x \cdot [h] \otimes_{\mathcal{B}} 1
     \end{align*}
     is well-defined. Let $f: X \to X'$ be a morphism of $\mathcal{H}_{par}$-modules. Then, the following diagram commutes
     \[\begin{tikzcd}[column sep=large]
         { X \otimes_{\mathcal{H}_{par}} \mathcal{H}} & { X' \otimes_{\mathcal{H}_{par}} \mathcal{H}} \\
         { X \otimes_{\mathcal{B}} K} & {X' \otimes_{\mathcal{B}} K}
         \arrow["{f \otimes_{_{\scriptscriptstyle\mathcal{H}_{par}}} 1_{\mathcal{H}}}", from=1-1, to=1-2]
         \arrow["{f \otimes_{\mathcal{B}} 1}", from=2-1, to=2-2]
         \arrow["{\eta_X}"', from=1-1, to=2-1]
         \arrow["{\eta_{X'}}"', from=1-2, to=2-2]
     \end{tikzcd}\]
     Indeed,
     \begin{align*}
         \eta_{X'} \circ (f \otimes_{{H}_{par}} 1_{\mathcal{H}})(x \otimes_{\mathcal{H}_{par}}h) 
         &= \eta_{X'}(f(x) \otimes_{\mathcal{H}_{par}}h) \\ 
         &= f(x) \cdot [h] \otimes_{\mathcal{B}} 1 \\ 
         &= f(x \cdot [h] )  \otimes_{\mathcal{B}} 1 \\ 
         &=  (f \otimes_{\mathcal{B}} 1)(x \cdot [h]   \otimes_{\mathcal{B}} 1) \\
         &=  (f \otimes_{\mathcal{B}} 1) \circ \eta_X (x \otimes_{\mathcal{H}_{par}} h).
     \end{align*}
     Thus, $\eta: (- \otimes_{\mathcal{H}_{par}} \mathcal{H}) \to (- \otimes_{\mathcal{B}}K)$ is a natural transformation. Finally, consider the map
     \begin{align*}
         \tilde{\Phi}_X: X \times K  &\to X \otimes_{\mathcal{H}_{par}} \mathcal{H} \\ 
                         (x,r)       &\mapsto x \otimes_{\mathcal{H}_{par}} r 1_{\mathcal{H}}.
     \end{align*}
     Observe that $\tilde{\Phi}_X$ is $\mathcal{B}$-balanced. Indeed,
     \begin{align*}
         \tilde{\Phi}_X(x \cdot e_h,r)
         &= x \cdot e_h \otimes_{\mathcal{B}} r 1_{\mathcal{H}} \\
         &= x \otimes_{\mathcal{B}} e_h \triangleright ( r 1_{\mathcal{H}} )\\
         &= x \otimes_{\mathcal{B}} \varepsilon(h) r 1_{\mathcal{H}} \\
         &= x \otimes_{\mathcal{B}} (e_h \triangleright r) 1_{\mathcal{H}} \\
         &=  \tilde{\Phi}_X(x, e_h \triangleright r).
     \end{align*}
     Thus, the map
     \begin{align*}
         \Phi_X: X \otimes_{\mathcal{B}} K  &\to X \otimes_{\mathcal{H}_{par}} \mathcal{H} \\ 
                         x \otimes_{\mathcal{B}} r       &\mapsto x \otimes_{\mathcal{H}_{par}} r 1_{\mathcal{H}}
     \end{align*}
     is well-defined. The maps $\eta_X$ and $\Phi_X$ are mutual inverses:
     \[
         \eta_X \circ \Phi_X (x \otimes_{\mathcal{B}} r) = \eta_X(x \otimes_{\mathcal{H}_{par}} r 1_{\mathcal{H}}) = x \cdot (r1_{\mathcal{H}}) \otimes_{\mathcal{B}} 1 = x \otimes_{\mathcal{B}}  r,
     \]
     and
     \[
         \Phi_X \circ \eta_X (x \otimes_{\mathcal{H}_{par}} h) = \Phi_X(x \cdot [h] \otimes_{\mathcal{B}} 1) = x \cdot [h] \otimes_{\mathcal{H}_{par}} 1_{\mathcal{H}} = x \otimes_{\mathcal{H}_{par}} h.
     \]
 \end{proof}
 
 \begin{proposition} \label{p_PoHIsResolution}
     Let $P_\bullet \to \mathcal{B}$ be a projective resolution of right $\mathcal{H}_{par}$-modules of $\mathcal{B}$. Then, $P_\bullet \otimes_{\mathcal{H}_{par}} \mathcal{H}$ is a projective resolution of right $\mathcal{H}$-modules of $K$.
 \end{proposition}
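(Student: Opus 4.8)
The plan is to verify separately the two requirements for a projective resolution: that each term $P_n \otimes_{\mathcal{H}_{par}} \mathcal{H}$ is a projective right $\mathcal{H}$-module, and that the augmented complex $P_\bullet \otimes_{\mathcal{H}_{par}} \mathcal{H} \to K$ is exact. The first requirement is handed to us directly by Lemma~\ref{l_PHisHprojective}: applying $- \otimes_{\mathcal{H}_{par}} \mathcal{H}$ to each projective right $\mathcal{H}_{par}$-module $P_n$ yields a projective right $\mathcal{H}$-module, so every term of the complex is projective.

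For exactness, the strategy I would follow is to transport the whole computation from $\mathcal{H}_{par}$ to its commutative subalgebra $\mathcal{B}$, where the situation becomes transparent. By Lemma~\ref{l_HoBFunctors} the functor $- \otimes_{\mathcal{H}_{par}} \mathcal{H}$ is naturally isomorphic to $- \otimes_{\mathcal{B}} K$; since a natural isomorphism of functors commutes with the differentials of $P_\bullet$, it induces an isomorphism of complexes $P_\bullet \otimes_{\mathcal{H}_{par}} \mathcal{H} \cong P_\bullet \otimes_{\mathcal{B}} K$. In particular the two complexes have the same homology, so it suffices to analyze $P_\bullet \otimes_{\mathcal{B}} K$.

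At this point I would invoke Lemma~\ref{l_BprojectiveResolution}, according to which $P_\bullet \to \mathcal{B}$ is also a projective resolution of $\mathcal{B}$ in \textbf{Mod}-$\mathcal{B}$. Hence $H_n(P_\bullet \otimes_{\mathcal{B}} K) \cong \operatorname{Tor}_n^{\mathcal{B}}(\mathcal{B}, K)$. Because $\mathcal{B}$ is free of rank one, and therefore flat, as a right module over itself, these groups vanish for $n \geq 1$, while $\operatorname{Tor}_0^{\mathcal{B}}(\mathcal{B}, K) = \mathcal{B} \otimes_{\mathcal{B}} K \cong K$. Transporting this conclusion back through the natural isomorphism, and identifying the augmentation target $\mathcal{B} \otimes_{\mathcal{H}_{par}} \mathcal{H} \cong \mathcal{B} \otimes_{\mathcal{B}} K \cong K$, shows that $P_\bullet \otimes_{\mathcal{H}_{par}} \mathcal{H}$ is acyclic in positive degrees with zeroth homology $K$; this is precisely the exactness of the augmented complex.

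I do not expect a genuine obstacle here, since the preceding lemmas already carry the load. The only points requiring mild care are checking that the natural isomorphism of Lemma~\ref{l_HoBFunctors} is compatible with the boundary maps of $P_\bullet$ so that it yields an honest isomorphism of complexes (this is automatic from naturality), and correctly matching the original augmentation $P_0 \to \mathcal{B}$ with the induced surjection onto $K$ under the above identifications.
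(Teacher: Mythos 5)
Your proposal is correct and follows essentially the same route as the paper's own proof: projectivity of the terms via Lemma~\ref{l_PHisHparIsBprojective}\relax{} --- more precisely Lemma~\ref{l_PHisHprojective} --- and exactness by converting $P_\bullet \otimes_{\mathcal{H}_{par}} \mathcal{H}$ into $P_\bullet \otimes_{\mathcal{B}} K$ through Lemma~\ref{l_HoBFunctors}, then computing $\operatorname{Tor}_n^{\mathcal{B}}(\mathcal{B},K)$ using Lemma~\ref{l_BprojectiveResolution}. Your added remarks on compatibility of the natural isomorphism with the differentials and on matching the augmentations are exactly the (routine) points the paper leaves implicit.
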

 
 \begin{proof}
     By Lemma \ref{l_PHisHprojective} we know that $P_\bullet \otimes_{\mathcal{H}_{par}} \mathcal{H}$ is a complex of projective right $\mathcal{H}$-modules. Then, $P_\bullet \otimes_{\mathcal{H}_{par}} \mathcal{H}$ is a resolution of $K$ if, and only if, 
     \[
        H_n(P_\bullet \otimes_{\mathcal{H}_{par}} \mathcal{H}) = \left\{\begin{matrix}
            0 & \text{if }n \geq 1 \\ 
            K & \text{if }n = 0.
            \end{matrix}\right.
     \]
     Observe that by Lemma \ref{l_HoBFunctors}  and Lemma \ref{l_BprojectiveResolution} we have that
     \begin{align*}
         H_n(P_\bullet \otimes_{\mathcal{H}_{par}} \mathcal{H})
         &= H_n(P_\bullet \otimes_{\mathcal{B}} K) \\
         &= \operatorname{Tor}_n^{\mathcal{B}}(\mathcal{B},K) \cong   \left\{\begin{matrix}
             0 & \text{if }n \geq 1 \\ 
             K & \text{if }n = 0.
             \end{matrix}\right.
     \end{align*}
 \end{proof}
 
 \begin{remark}
     It follows from the proof of Proposition \ref{p_PoHIsResolution} that 
     \[
        H_n^{par}(\mathcal{H},\mathcal{H})= \operatorname{Tor}_n^{\mathcal{H}_{par}}(\mathcal{B}, \mathcal{H})= \operatorname{Tor}_n^{\mathcal{B}}(\mathcal{B},K) \cong   \left\{\begin{matrix}
             0 & \text{if }n \geq 1 \\ 
             K & \text{if }n = 0.
             \end{matrix}\right.
     \] 
 \end{remark}
 
 \begin{proposition} \label{p_GlobalCoincidesWithPartial}
     Let $X$ be a left (right) $\mathcal{H}$-module. Then,
     \[
         \operatorname{Tor}_\bullet^{\mathcal{H}_{par}}(\mathcal{B},X) =  \operatorname{Tor}_\bullet^{\mathcal{H}}(K,X) \text{ and } \operatorname{Ext}^\bullet_{\mathcal{H}_{par}}(\mathcal{B}, X) = \operatorname{Ext}^\bullet_{\mathcal{H}}(K, X).
     \]
 \end{proposition}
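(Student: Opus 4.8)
The plan is to fix a single projective resolution of $\mathcal{B}$ over $\mathcal{H}_{par}$ and transport it, through the surjective algebra map $\mathcal{H}_{par} \to \mathcal{H}$, $[h] \mapsto h$, into a projective resolution of $K$ over $\mathcal{H}$ by means of Proposition~\ref{p_PoHIsResolution}. I then match the two families of derived functors using the standard base-change isomorphisms attached to $\mathcal{H}$ viewed as an $\mathcal{H}_{par}$-$\mathcal{H}$-bimodule (left action through $[h]\mapsto h$, right action by multiplication). Throughout, the key observation is that the $\mathcal{H}_{par}$-module structure on an $\mathcal{H}$-module $X$ factors through this map.

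For the homological identity, I would take a projective resolution $P_\bullet \to \mathcal{B}$ in $\text{Mod-}\mathcal{H}_{par}$. By Proposition~\ref{p_PoHIsResolution}, $P_\bullet \otimes_{\mathcal{H}_{par}} \mathcal{H} \to K$ is a projective resolution of $K$ in $\text{Mod-}\mathcal{H}$, hence computes $\operatorname{Tor}^{\mathcal{H}}_\bullet(K,X)$ for a left $\mathcal{H}$-module $X$. Since the $\mathcal{H}_{par}$-action on $X$ factors through $\mathcal{H}$, associativity of the tensor product supplies a natural isomorphism of complexes
\[
(P_\bullet \otimes_{\mathcal{H}_{par}} \mathcal{H}) \otimes_{\mathcal{H}} X \cong P_\bullet \otimes_{\mathcal{H}_{par}} (\mathcal{H} \otimes_{\mathcal{H}} X) \cong P_\bullet \otimes_{\mathcal{H}_{par}} X,
\]
and passing to homology gives $\operatorname{Tor}^{\mathcal{H}}_\bullet(K,X) \cong \operatorname{Tor}^{\mathcal{H}_{par}}_\bullet(\mathcal{B},X)$.

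For the cohomological identity, I would instead take a projective resolution $P_\bullet \to \mathcal{B}$ in $\mathcal{H}_{par}\text{-Mod}$. First I would record the left-module analogue of Proposition~\ref{p_PoHIsResolution}, proved by the same argument: $\mathcal{H} \otimes_{\mathcal{H}_{par}} P_\bullet$ is a complex of projective left $\mathcal{H}$-modules by Lemma~\ref{l_PHisHprojective}, and by the left version of Lemma~\ref{l_HoBFunctors} together with Proposition~\ref{p_ProjHparmodulesAreBproj} (replacing Lemma~\ref{l_BprojectiveResolution}) its homology equals $H_n(K \otimes_{\mathcal{B}} P_\bullet) = \operatorname{Tor}^{\mathcal{B}}_n(K,\mathcal{B})$, which is $K$ for $n=0$ and $0$ otherwise. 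Thus $\mathcal{H} \otimes_{\mathcal{H}_{par}} P_\bullet \to K$ is a projective resolution of $K$ in $\mathcal{H}\text{-Mod}$ and computes $\operatorname{Ext}^\bullet_{\mathcal{H}}(K,X)$. The restriction–extension adjunction then furnishes a natural isomorphism of cochain complexes
\[
\operatorname{Hom}_{\mathcal{H}}(\mathcal{H} \otimes_{\mathcal{H}_{par}} P_\bullet, X) \cong \operatorname{Hom}_{\mathcal{H}_{par}}(P_\bullet, X),
\]
whence $\operatorname{Ext}^\bullet_{\mathcal{H}}(K,X) \cong \operatorname{Ext}^\bullet_{\mathcal{H}_{par}}(\mathcal{B},X)$ after taking cohomology.

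The routine points are checking that both base-change identifications are isomorphisms of entire (co)chain complexes rather than of their individual terms, i.e. that they commute with the differentials; these follow from the naturality of associativity of $\otimes$ and of the tensor–hom adjunction, applied degreewise. The only genuinely new ingredient, and the step I expect to require the most care, is the left-module analogue of Proposition~\ref{p_PoHIsResolution}: one must confirm that the reduction from $\mathcal{H}_{par}$-projectivity to $\mathcal{B}$-projectivity used there for right modules is equally available on the left, which is exactly supplied by Proposition~\ref{p_ProjHparmodulesAreBproj} together with the commutativity of $\mathcal{B}$.
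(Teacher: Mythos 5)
Your proof is correct, and the Tor half is essentially verbatim the paper's argument: a projective resolution $P_\bullet \to \mathcal{B}$ of right $\mathcal{H}_{par}$-modules, Proposition~\ref{p_PoHIsResolution}, and associativity of the tensor product across $\mathcal{H}$ viewed as an $(\mathcal{H}_{par},\mathcal{H})$-bimodule. In the Ext half you diverge in handedness. The paper keeps the same right-module resolution and takes $X$ a right $\mathcal{H}$-module, rewriting $X \cong \operatorname{Hom}_{\mathcal{H}}(\mathcal{H},X) = \operatorname{Hom}_{\mathcal{H}_{par}}(\mathcal{H},X)$ (the equality holds because both actions factor through the surjection $\mathcal{H}_{par}\to\mathcal{H}$) and then applying the tensor--hom adjunction to land on $\operatorname{Hom}(P_\bullet \otimes_{\mathcal{H}_{par}} \mathcal{H}, X)$, so it reuses the already-proved right-module Proposition~\ref{p_PoHIsResolution}. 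You instead pass to left modules, prove the left analogue of Proposition~\ref{p_PoHIsResolution} from scratch, and conclude via the induction--restriction adjunction $\operatorname{Hom}_{\mathcal{H}}(\mathcal{H}\otimes_{\mathcal{H}_{par}} P_\bullet, X) \cong \operatorname{Hom}_{\mathcal{H}_{par}}(P_\bullet, X)$. The two routes are mirror images of one another (Lemma~\ref{l_HparMODHpar} and cocommutativity make the sides interchangeable), and each buys something: the paper avoids redoing Proposition~\ref{p_PoHIsResolution} on the left, while your route avoids the identification $\operatorname{Hom}_{\mathcal{H}}(\mathcal{H},X)=\operatorname{Hom}_{\mathcal{H}_{par}}(\mathcal{H},X)$, which the paper invokes without comment; moreover, your substitution of Proposition~\ref{p_ProjHparmodulesAreBproj} for Lemma~\ref{l_BprojectiveResolution} in the left-module setting is exactly the right repair, since Lemma~\ref{l_HoBFunctors} and Lemma~\ref{l_PHisHprojective} are already stated two-sidedly and $\operatorname{Tor}_n^{\mathcal{B}}(K,\mathcal{B})$ vanishes for $n\geq 1$ because $\mathcal{B}$ is projective over itself. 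One small caveat: like the paper, you prove one handedness of each identity (Tor for left $X$, Ext for left $X$, versus the paper's Ext for right $X$), leaving the mirror cases to the symmetry of Lemma~\ref{l_HparMODHpar}; that is acceptable but deserves an explicit sentence.
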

 
 \begin{proof}
     Let $P_\bullet$ be a projective resolution of $\mathcal{B}$ in \textbf{Mod}-$\mathcal{H}_{par}$. Then, if $X$ is a left $\mathcal{H}$-module we have
     \begin{align*}
         \operatorname{Tor}_n^{\mathcal{H}_{par}}(\mathcal{B},X)
         &= H_n(P_\bullet \otimes_{\mathcal{H}_{par}} X) \\
         &= H_n(P_\bullet \otimes_{\mathcal{H}_{par}} (\mathcal{H} \otimes_{\mathcal{H}} X)) \\
         &= H_n((P_\bullet \otimes_{\mathcal{H}_{par}} \mathcal{H}) \otimes_{\mathcal{H}} X) \\
     (\text{by Proposition } \ref{p_PoHIsResolution})    &= \operatorname{Tor}_\bullet^{\mathcal{H}}(K,X).
     \end{align*}
     In the dual settings, if $X$ is a right $\mathcal{H}$-module, we have
     \begin{align*}
         \operatorname{Ext}^n_{\mathcal{H}_{par}}(\mathcal{B},X)
         &= H_n(\operatorname{Hom}_{\mathcal{H}_{par}}(P_\bullet, X)) \\
         &= H_n\left( \operatorname{Hom}_{\mathcal{H}_{par}}(P_\bullet, \operatorname{Hom}_{\mathcal{H}}(\mathcal{H},X)) \right) \\
         &= H_n\left( \operatorname{Hom}_{\mathcal{H}_{par}}(P_\bullet, \operatorname{Hom}_{\mathcal{H}_{par}}(\mathcal{H},X)) \right) \\
         &= H_n\left( \operatorname{Hom}_{\mathcal{H}_{par}}(P_\bullet \otimes_{\mathcal{H}_{par}}\mathcal{H}, X) \right) \\
         (\text{by Proposition } \ref{p_PoHIsResolution})    &= \operatorname{Ext}^n_{\mathcal{H}}(K, X).
     \end{align*}
 \end{proof}
 
 Assume that we have a global action $\cdot:\mathcal{H} \otimes A \to A$. Then, any $A \# \mathcal{H}$-bimodule $M$ is an $\mathcal{H}$-module, consequently, $H_n(A,M)$ and $H^n(A,M)$ are $\mathcal{H}$-modules. Therefore, by Proposition \ref{p_GlobalCoincidesWithPartial} the spectral sequences of Theorems \ref{t_SSHH} and \ref{t_SSHC} take the global forms:
 \begin{corollary}\label{cor:globalHopf}
 Let $\mathcal{H}$ be a cocommutative Hopf algebra, which is projective as $K$-module, and let $\cdot:\mathcal{H} \otimes A \to A$  be a global action. Then there exist spectral sequences
    \[
        E^2_{p,q} = \operatorname{Tor}_p^{\mathcal{H}}(K,H_q(A,M) ) \Rightarrow H_{p+q}(A \#\mathcal{H}, M),
    \]
    and
    \[
            E_2^{p,q} = \operatorname{Ext}^p_{\mathcal{H}}(K,H^q(A,M) ) \Rightarrow H^{p+q}(A \#\mathcal{H}, M).
    \]
 \end{corollary}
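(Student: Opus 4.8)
The plan is to read off both spectral sequences from Theorems~\ref{t_SSHH} and \ref{t_SSHC} and merely reinterpret their second pages by means of Proposition~\ref{p_GlobalCoincidesWithPartial}. A global action is in particular symmetric (using $h\cdot 1_A=\varepsilon(h)1_A$ one checks (PA4) directly), so the hypotheses of those theorems are met, and we already have the first quadrant homological spectral sequence with $E^2_{p,q}=\operatorname{Tor}_p^{\mathcal{H}_{par}}(\mathcal{B},H_q(A,M))$ abutting to $H_{p+q}(A\#\mathcal{H},M)$, together with its cohomological counterpart $E_2^{p,q}=\operatorname{Ext}^p_{\mathcal{H}_{par}}(\mathcal{B},H^q(A,M))$ abutting to $H^{p+q}(A\#\mathcal{H},M)$. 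The abutments will not be altered, so it suffices to rewrite the coefficient objects appearing on these pages.

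The key intermediate step is to verify that in the global case $H_q(A,M)$ and $H^q(A,M)$ are genuine $\mathcal{H}$-modules, not merely $\mathcal{H}_{par}$-modules. Here I would use that $h\cdot 1_A=\varepsilon(h)1_A$, so by Lemma~\ref{l_teneq}(ii) the idempotent $e_h$ acts on $A$ as the scalar $\varepsilon(h)$, and by Lemma~\ref{l_teneq}(v) it acts on $A\otimes_{A^e}X$ as the same scalar, for every $A\#\mathcal{H}$-bimodule $X$. Since the $\mathcal{H}_{par}$-action of \eqref{e_AeMKpGM} on $A\otimes_{A^e}X$ is the diagonal one built from the $\mathcal{H}_{par}$-structures \eqref{e_Hpar-modM} on $X$ and \eqref{e_Hpar-modA} on $A$, the fact that every $e_h$ acts by $\varepsilon(h)$ means this action annihilates the kernel of the canonical surjection $\mathcal{H}_{par}\to\mathcal{H}$, $[h]\mapsto h$, and hence factors through $\mathcal{H}$. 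Applying this termwise to a projective resolution $P_\bullet\to M$ in $(A\#\mathcal{H})^e\operatorname{-Mod}$, whose image under $F_1$ computes $H_\bullet(A,M)$ by Remark~\ref{r_projectiveAskewGmodulesAreAprojectives}, and noting that all differentials are $\mathcal{H}_{par}$-linear, the homology $H_q(A,M)$ inherits a compatible $\mathcal{H}$-module structure; the dual argument run on an injective resolution (via Remark~\ref{r_Hochschild_cohomology}) gives the same for $H^q(A,M)$.

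With this in hand, I would apply Proposition~\ref{p_GlobalCoincidesWithPartial}, which for any $\mathcal{H}$-module $X$ yields $\operatorname{Tor}_\bullet^{\mathcal{H}_{par}}(\mathcal{B},X)\cong\operatorname{Tor}_\bullet^{\mathcal{H}}(K,X)$ and $\operatorname{Ext}^\bullet_{\mathcal{H}_{par}}(\mathcal{B},X)\cong\operatorname{Ext}^\bullet_{\mathcal{H}}(K,X)$. Specializing to $X=H_q(A,M)$ and $X=H^q(A,M)$ transforms the $E^2$- and $E_2$-terms into $\operatorname{Tor}_p^{\mathcal{H}}(K,H_q(A,M))$ and $\operatorname{Ext}^p_{\mathcal{H}}(K,H^q(A,M))$ respectively, while the abutments $H_{p+q}(A\#\mathcal{H},M)$ and $H^{p+q}(A\#\mathcal{H},M)$ are untouched. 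This produces exactly the two claimed spectral sequences.

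I expect the main obstacle to be precisely the descent carried out in the second step: Proposition~\ref{p_GlobalCoincidesWithPartial} is stated for $\mathcal{H}$-modules, so one must be sure that the $\mathcal{H}_{par}$-structure on the derived objects really comes from an $\mathcal{H}$-structure, and this has to be checked compatibly along an entire resolution rather than on $M$ alone. The scalar behaviour $e_h\triangleright a=\varepsilon(h)a$ furnished by Lemma~\ref{l_teneq}, combined with the $\mathcal{H}_{par}$-linearity of $F_1$ and of the connecting differentials, is what makes this factorization automatic and hence what I would emphasize in writing out the details.
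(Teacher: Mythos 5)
Your proposal is correct and is essentially the paper's own argument: the authors prove this corollary in the two sentences immediately preceding it, by noting that for a global action every $A\#\mathcal{H}$-bimodule — and consequently every $H_q(A,M)$ and $H^q(A,M)$ — carries a genuine $\mathcal{H}$-module structure, and then rewriting the $E^2$- and $E_2$-terms of Theorems~\ref{t_SSHH} and \ref{t_SSHC} via Proposition~\ref{p_GlobalCoincidesWithPartial}, exactly as you do. The only sub-step where you diverge is the descent mechanism: you assert without proof that $\ker(\mathcal{H}_{par}\twoheadrightarrow\mathcal{H})$ is the ideal generated by $\{e_h-\varepsilon(h)1\}$ — this is true (it follows, e.g., from the defining relation $[h][k_{(1)}][S(k_{(2)})]=[hk_{(1)}][S(k_{(2)})]$ together with $\tilde{e}_k=e_{S(k)}$, or from $\mathcal{H}_{par}\cong\mathcal{B}\#\mathcal{H}$), but the paper's implicit shortcut avoids it entirely: for a global action $\pi_0\colon h\mapsto 1_A\#h$ is an honest algebra morphism $\mathcal{H}\to A\#\mathcal{H}$, so $M$, $A$, and hence $A\otimes_{A^e}X$ and $\operatorname{Hom}_{A^e}(A,X)$ are genuine $\mathcal{H}$-modules outright, with no kernel computation needed.
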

 

\section{A projective resolution of \texorpdfstring{$\mathcal{B}$}{TEXT}}\label{sec:projResol}

Finally, we construct a projective resolution of $\mathcal{B}$. Define the map
     \begin{align*}
         \psi: \mathcal{H}_{par} &\to \mathcal{B} \\ 
                 x &\mapsto x \triangleright 1_{\mathcal{B}}.
     \end{align*}
Notice that $\psi$ is a morphism of $\mathcal{H}_{par}$-modules. Indeed, let $x,z \in \mathcal{H}_{par}$. Then,
     \[
         \psi(xz)=xz \triangleright 1_{\mathcal{B}} = x \triangleright (z \triangleright 1_{\mathcal{B}}) = x \triangleright \psi(z).
     \]
We define the modules of our resolution by
\begin{equation}\label{eq:proj}
    C_n'(\mathcal{H}):= \mathcal{H}_{par}^{\otimes _{\mathcal{B}}n+1},
\end{equation}
as the tensor product of $n+1$ copies of $\mathcal{H}_{par}$ over $\mathcal{B}$.
 Henceforth, for the sake of simplicity, since there is no ambiguity we will denote $C_n'(\mathcal{H})$ just by $C_n'$.
 
 \begin{proposition} \label{p_CnIsProjective}
     $C_n'$ is projective as left (right) $\mathcal{H}_{par}$-module.
 \end{proposition}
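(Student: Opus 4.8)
The plan is to realize $C_n'$ as a module induced up from $\mathcal{B}$ along the inclusion $\mathcal{B}\hookrightarrow\mathcal{H}_{par}$, and to invoke the standard fact that extension of scalars carries projectives to projectives. Writing $C_n'=\mathcal{H}_{par}\otimes_{\mathcal{B}}P_n$ with $P_n:=\mathcal{H}_{par}^{\otimes_{\mathcal{B}}n}$, where the left $\mathcal{H}_{par}$-action is left multiplication on the outermost factor, it suffices to show that $P_n$ is projective as a left $\mathcal{B}$-module. Indeed, the functor $\mathcal{H}_{par}\otimes_{\mathcal{B}}-$ sends $\mathcal{B}$ to $\mathcal{H}_{par}$, commutes with arbitrary direct sums, and preserves direct summands, so it carries projective left $\mathcal{B}$-modules to projective left $\mathcal{H}_{par}$-modules (equivalently, it is left adjoint to the exact restriction functor, hence preserves projectives).

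To prove that $P_n$ is $\mathcal{B}$-projective I would argue by induction on $n$, the base case $P_0=\mathcal{B}$ being free over $\mathcal{B}$. For the inductive step the key observation is the following: if $M$ is a left $\mathcal{B}$-module that is projective over $\mathcal{B}$, then $\mathcal{H}_{par}\otimes_{\mathcal{B}}M$, viewed as a left $\mathcal{B}$-module via its outer factor, is again $\mathcal{B}$-projective. To see this, choose $M'$ with $M\oplus M'\cong\mathcal{B}^{(I)}$; applying $\mathcal{H}_{par}\otimes_{\mathcal{B}}-$ and using the left $\mathcal{B}$-module isomorphism $\mathcal{H}_{par}\otimes_{\mathcal{B}}\mathcal{B}^{(I)}\cong\mathcal{H}_{par}^{(I)}$ exhibits $\mathcal{H}_{par}\otimes_{\mathcal{B}}M$ as a direct summand of $\mathcal{H}_{par}^{(I)}$, which is $\mathcal{B}$-projective since $\mathcal{H}_{par}$ is so by Lemma~\ref{l_HparIsBprojective}. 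Taking $M=P_n$ yields $P_{n+1}=\mathcal{H}_{par}\otimes_{\mathcal{B}}P_n$ projective over $\mathcal{B}$, which closes the induction and gives the left-module assertion.

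The right-module statement is entirely symmetric: here I would use that $\mathcal{H}_{par}$, being free as a right $\mathcal{H}_{par}$-module, is projective as a right $\mathcal{B}$-module by Lemma~\ref{l_BprojectiveResolution}, and then run the identical induction-of-scalars argument on the right. Alternatively one can transport the left statement across the algebra isomorphism $\mathcal{S}\colon\mathcal{H}_{par}\to\mathcal{H}_{par}^{op}$ of \eqref{e_Spar}, which restricts to the identity on the commutative subalgebra $\mathcal{B}$. The only point requiring genuine care — and the place where a careless argument would break down — is the bookkeeping of the two commuting $\mathcal{B}$-actions on $\mathcal{H}_{par}$: the tensor products are formed over the inner (right) $\mathcal{B}$-structure, whereas the projectivity being propagated refers to the outer (left) $\mathcal{B}$-structure, and one must check that each isomorphism invoked above is indeed an isomorphism of left $\mathcal{B}$-modules.
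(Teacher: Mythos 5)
Your proof is correct, and it takes a genuinely different (if related) route from the paper's. The paper also argues by induction on $n$, but it tracks $\mathcal{H}_{par}$-projectivity of $C_n'$ itself at every step: writing $C_{n+1}'=\mathcal{H}_{par}\otimes_{\mathcal{B}}C_n'$ (resp.\ $C_n'\otimes_{\mathcal{B}}\mathcal{H}_{par}$ in the left-module case) and invoking adjoint associativity, it identifies $\operatorname{Hom}_{\mathcal{H}_{par}}(C_{n+1}',-)\cong\operatorname{Hom}_{\mathcal{B}}(\mathcal{H}_{par},\operatorname{Hom}_{\mathcal{H}_{par}}(C_n',-))$ and concludes that this functor is exact, being a composition of two exact functors: the inner one exact by the inductive hypothesis, the outer one exact because $\mathcal{H}_{par}$ is projective over $\mathcal{B}$ (Lemma~\ref{l_HparIsBprojective}). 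You instead push the induction down to the subalgebra: you establish $\mathcal{B}$-projectivity of the truncated powers $P_n=\mathcal{H}_{par}^{\otimes_{\mathcal{B}}n}$ by exhibiting $\mathcal{H}_{par}\otimes_{\mathcal{B}}M$ as a direct summand of $\mathcal{H}_{par}^{(I)}$, and then induce up along $\mathcal{B}\hookrightarrow\mathcal{H}_{par}$ a single time, using that extension of scalars (left adjoint to exact restriction) preserves projectives. Both arguments consume the same key input, Lemma~\ref{l_HparIsBprojective}, and both need its right-handed counterpart for one chirality, which you source correctly: Lemma~\ref{l_BprojectiveResolution} applied to the free right module $\mathcal{H}_{par}$, or equivalently the fact, established inside the proof of that lemma, that $\mathcal{S}$ restricts to $\mathrm{id}_{\mathcal{B}}$. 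What your version buys is elementarity — no Hom-tensor adjunction, just direct sums and summands — together with an explicit isolation of the bimodule bookkeeping you rightly flag as the delicate point (the tensor products are formed over the inner $\mathcal{B}$-actions while the projectivity being propagated lives on the outer one, and each isomorphism must be checked to be outer-$\mathcal{B}$-linear, which it is since the relevant maps only touch the inner tensorands); what the paper's version buys is brevity, since the cited adjunction isomorphisms do all of that bookkeeping in one stroke.
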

 
 \begin{proof}
     By induction, obviously $C_0'=\mathcal{H}_{par}$ is free as left (right) $\mathcal{H}_{par}$-module. Assume that $C_n'$ is projective as left (right) $\mathcal{H}_{par}$-module, by \cite[Proposition 3.9]{jacobson2012basic} we know that $C_{n+1}'$ is projective if, and only if, $\operatorname{Hom}_{\mathcal{H}_{par}}(C_{n+1}',-)$ is exact. First, we consider the right module case. Then, by \cite[Theorem 2.75]{RotmanAnInToHoAl} we have that
     \begin{align*}
         \operatorname{Hom}_{\mathcal{H}_{par}}(C_{n+1}',-) 
         &=  \operatorname{Hom}_{\mathcal{H}_{par}}(\mathcal{H}_{par} \otimes_{\mathcal{B}} C_{n}',-) \\ 
         &\cong \operatorname{Hom}_{\mathcal{B}}(\mathcal{H}_{par}, \operatorname{Hom}_{\mathcal{H}_{par}}(C_{n}',-)).
     \end{align*}
     Thus, $\operatorname{Hom}_{\mathcal{H}_{par}}(C_{n+1}',-)$ is exact since $C_{n}'$ is projective as a right $\mathcal{H}_{par}$-module by hypothesis and $\mathcal{H}_{par}$ is projective as a right $\mathcal{B}$-module. Analogously for the left module case by \cite[Theorem 2.76]{RotmanAnInToHoAl} we have that
     \begin{align*}
         \operatorname{Hom}_{\mathcal{H}_{par}}(C_{n+1}',-) 
         &=  \operatorname{Hom}_{\mathcal{H}_{par}}(C_{n}' \otimes_{\mathcal{B}} \mathcal{H}_{par} ,-) \\ 
         &\cong \operatorname{Hom}_{\mathcal{B}}(\mathcal{H}_{par}, \operatorname{Hom}_{\mathcal{H}_{par}}(C_{n}',-)),
     \end{align*}
     from which we conclude that $C_{n+1}'$ also is projective as a left $\mathcal{H}_{par}$-module.
 \end{proof}
 
 Observe that product in $\mathcal{H}_{par}$ is a well-defined morphism of $\mathcal{H}_{par}$-bimodules.
 \begin{align*}
     \mu: \mathcal{H}_{par} \otimes_{\mathcal{B}} \mathcal{H}_{par}  &\to \mathcal{H}_{par} \\ 
                 x \otimes_{\mathcal{B}} y                           &\mapsto xy.
 \end{align*}
 Using this map we are able to define the following face maps of left $\mathcal{H}_{par}$-modules for $n \geq 1$. If $0 \leq i \leq n-1$ we set $d_i: C_n' \to C_{n-1}'$ such that
 \[
     d_i:= id_{\mathcal{H}_{par}} \otimes_{\mathcal{B}} \dots \underset{i\text{-position}}{\underbrace{\otimes_{\mathcal{B}} \,  \mu  \, \otimes_{\mathcal{B}}}} \ldots \otimes_{\mathcal{B}} id_{\mathcal{H}_{par}}, \, \, 
 \]
 In particular, for a basic element of $C_n'$, this formula is determined by
 \begin{equation}
    d_i(x_0 \otimes_{\mathcal{B}} \ldots \otimes_{\mathcal{B}} x_n)= x_0 \otimes_{\mathcal{B}} \ldots \otimes_{\mathcal{B}} x_i x_{i+1} \otimes_{\mathcal{B}} \ldots \otimes_{\mathcal{B}} x_n.
 \end{equation}
 For $i = n$ we define
 \[
     d_n:= id_{\mathcal{H}_{par}} \otimes_{\mathcal{B}} \ldots \otimes_{\mathcal{B}} \mu(id_{\mathcal{H}_{par}} \otimes_{\mathcal{B}} \psi).
 \]
 Thus, for the basic elements of $C_n'$, we obtain
 \begin{equation}
     d_n(x_0 \otimes_{\mathcal{B}} \ldots \otimes_{\mathcal{B}} x_n) = x_0 \otimes_{\mathcal{B}} \ldots \otimes_{\mathcal{B}} x_{n-1} \psi(x_n)
 \end{equation}
 Analogously, we define the maps $s_i:C_n' \to C_{n+1}'$, $0 \leq i \leq n$ and $n \geq 0$, by
 
 \begin{equation}
     s_i(x_0 \otimes_{\mathcal{B}} \ldots \otimes_{\mathcal{B}} x_n) := x_0 \otimes_{\mathcal{B}} \ldots \otimes_{\mathcal{B}} x_i \otimes_{\mathcal{B}} 1_{\mathcal{H}_{par}} \otimes_{\mathcal{B}} x_{n+1} \otimes_{\mathcal{B}} \ldots \otimes_{\mathcal{B}} x_n.
 \end{equation}
 
 \begin{proposition}
     $(C_\bullet', d_i, s_i)$ is a simplicial module.
 \end{proposition}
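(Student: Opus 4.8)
The plan is to verify, applied to a basic element $x_0 \otimes_{\mathcal{B}} \cdots \otimes_{\mathcal{B}} x_n$, the five standard families of simplicial identities
$$ d_i d_j = d_{j-1} d_i \ (i<j), \qquad s_i s_j = s_{j+1}s_i \ (i\le j), $$
$$ d_i s_j = s_{j-1}d_i \ (i<j), \qquad d_i s_j = s_j d_{i-1} \ (i>j+1), \qquad d_i s_i = d_{i+1}s_i = \mathrm{id}. $$
Since each $s_i$ simply inserts a factor $1_{\mathcal{H}_{par}}$, and the faces $d_0,\dots,d_{n-1}$ are the usual bar-type faces obtained by multiplying two adjacent tensorands via $\mu$, the overwhelming majority of these equalities are precisely the classical identities of the bar construction. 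I would dispatch every identity \emph{not} involving the top face $d_n$ directly, using only the associativity of $\mu$, the unit relations $\mu(\mathrm{id}\otimes_{\mathcal{B}} 1_{\mathcal{H}_{par}}) = \mathrm{id} = \mu(1_{\mathcal{H}_{par}}\otimes_{\mathcal{B}}\mathrm{id})$, and the combinatorics of where a $1_{\mathcal{H}_{par}}$ is inserted relative to where a product is formed; these are routine substitutions.

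The only map departing from the bar construction is the top face $d_n$, which applies $\psi$ to the last tensorand before multiplying it into the previous one, so the genuine content lies in the identities touching $d_n$ (equivalently $d_{n+1}$ after a degeneracy). Here I would isolate two properties of $\psi$: first, that $\psi$ is a morphism of left $\mathcal{H}_{par}$-modules, so $\psi(xy)=x\triangleright\psi(y)$; and second, that $\psi$ restricts to the identity on $\mathcal{B}$, since for $b\in\mathcal{B}$ one has $\psi(b)=b\triangleright 1_{\mathcal{B}}=b\,1_{\mathcal{B}}=b$ by Lemma~\ref{l_BactionB}. (The same relation $\psi(by)=b\,\psi(y)$ for $b\in\mathcal{B}$ is what guarantees that $\mu\circ(\mathrm{id}\otimes_{\mathcal{B}}\psi)$ descends to the $\mathcal{B}$-balanced tensor product, so that $d_n$ is well defined.) Combining the two properties yields the single key relation
$$ \psi(x\,\psi(y)) = x \triangleright \psi(\psi(y)) = x \triangleright \psi(y) = \psi(xy). $$

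This relation settles the most delicate case, namely $d_{n-1}d_n = d_{n-1}d_{n-1}$ (the instance $i=n-1$, $j=n$ of the first family): computing $d_n$ first and then the top face of $C_{n-1}'$ gives $x_0 \otimes_{\mathcal{B}}\cdots\otimes_{\mathcal{B}} x_{n-2}\,\psi\bigl(x_{n-1}\psi(x_n)\bigr)$, whereas multiplying $x_{n-1}x_n$ first and then applying the top face gives $x_0 \otimes_{\mathcal{B}}\cdots\otimes_{\mathcal{B}} x_{n-2}\,\psi(x_{n-1}x_n)$, and the displayed equality identifies the two. The remaining top-face identities, namely $d_i d_n = d_{n-1}d_i$ for $i<n-1$, the mixed relations $d_{n+1}s_j = s_j d_n$ for $j<n$, and the unit relations $d_n s_n=\mathrm{id}$ and $d_{n+1}s_n=\mathrm{id}$, reduce to the same principle; in particular $d_{n+1}s_n=\mathrm{id}$ uses $\psi(1_{\mathcal{H}_{par}})=1_{\mathcal{B}}=1_{\mathcal{H}_{par}}$ (recall $e_{1_{\mathcal{H}}}=1_{\mathcal{H}_{par}}$).

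The main obstacle is therefore not a single hard step but the bookkeeping required to confirm that one algebraic input, $\psi(x\,\psi(y))=\psi(xy)$, suffices for \emph{every} identity that involves the top face; once this is in hand, all simplicial identities follow by direct substitution into the defining formulas for $d_i$ and $s_i$.
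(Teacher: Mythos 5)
Your proposal is correct and takes the same route as the paper, whose entire proof is ``Direct computations'': you have simply carried out those computations, correctly isolating the one non-bar-construction input $\psi(x\,\psi(y))=\psi(xy)$, which indeed follows from $\psi$ being a morphism of left $\mathcal{H}_{par}$-modules together with $\psi|_{\mathcal{B}}=\operatorname{id}_{\mathcal{B}}$ (via Lemma~\ref{l_BactionB}), and which handles every identity involving the top face $d_n$, including the delicate case $d_{n-1}d_n=d_{n-1}d_{n-1}$ and the unit relation $d_{n+1}s_n=\operatorname{id}$ via $\psi(1_{\mathcal{H}_{par}})=1_{\mathcal{B}}=1_{\mathcal{H}_{par}}$.
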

 
 \begin{proof}
     Direct computations.
 \end{proof}
 
 Consequently, $(C_\bullet',\partial_\bullet)$ is a complex where
 \[
     \partial_n := \sum_{i=0}^n (-1)^i d_i.
 \]
 \begin{proposition}\label{prop:projResol}
     $C_\bullet' \overset{\psi}{\to} \mathcal{B}$ is a projective resolution of $\mathcal{B}$ in $\mathcal{H}_{par}$-mod.
 \end{proposition}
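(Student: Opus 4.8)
The plan is to combine Proposition~\ref{p_CnIsProjective} with an acyclicity argument based on a contracting homotopy. By Proposition~\ref{p_CnIsProjective} each $C_n'$ is projective as a left $\mathcal{H}_{par}$-module, and since every $d_i$ is assembled from the multiplication $\mu$, the augmentation $\psi$, and identity maps acting away from the leftmost tensor slot (and, for $d_0$, compatibly with it), each $d_i$ and hence each $\partial_n = \sum_{i=0}^n (-1)^i d_i$ is a morphism of left $\mathcal{H}_{par}$-modules. One should first record that $\psi\circ\partial_1 = 0$: for $x_0 \otimes_{\mathcal{B}} x_1 \in C_1'$ we have $\psi(x_0 x_1) = x_0 \triangleright \psi(x_1)$ and $\psi(x_0 \psi(x_1)) = x_0 \triangleright (\psi(x_1) \triangleright 1_{\mathcal{B}}) = x_0 \triangleright \psi(x_1)$, where the last equality uses Lemma~\ref{l_BactionB} in the form $\psi(x_1) \triangleright 1_{\mathcal{B}} = \psi(x_1) 1_{\mathcal{B}} = \psi(x_1)$. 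Thus $C_\bullet' \xrightarrow{\psi} \mathcal{B}$ is an augmented complex of projective left $\mathcal{H}_{par}$-modules, and the only remaining point is exactness.

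For exactness I would exhibit a $K$-linear contracting homotopy. Set $C_{-1}' := \mathcal{B}$, let $h_{-1}\colon \mathcal{B} \to C_0' = \mathcal{H}_{par}$ be the inclusion $\mathcal{B} \hookrightarrow \mathcal{H}_{par}$, and for $n \geq 0$ define the \emph{extra degeneracy}
\[
    h_n\colon C_n' \to C_{n+1}', \qquad x_0 \otimes_{\mathcal{B}} \cdots \otimes_{\mathcal{B}} x_n \mapsto 1_{\mathcal{H}_{par}} \otimes_{\mathcal{B}} x_0 \otimes_{\mathcal{B}} \cdots \otimes_{\mathcal{B}} x_n.
\]
The first task is to check that $h_n$ is well defined: prepending the slot $1_{\mathcal{H}_{par}}$ preserves every $\mathcal{B}$-balancing relation among the original consecutive slots and introduces no relation to the left of $x_0$, so the underlying $K$-multilinear map factors through $C_n'$. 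Note that $h_n$ is only $K$-linear, not $\mathcal{H}_{par}$-linear; this is harmless, because a $K$-linear contracting homotopy already forces the underlying sequence of $K$-modules to be exact.

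The key computation is that $h$ satisfies the simplicial relations with the faces, namely $d_0 h_n = \mathrm{id}$ and $d_i h_n = h_{n-1} d_{i-1}$ for $1 \le i \le n+1$, together with the low-degree identities $\psi h_{-1} = \mathrm{id}_{\mathcal{B}}$ (again via Lemma~\ref{l_BactionB}, since $\psi(b) = b \triangleright 1_{\mathcal{B}} = b$) and $\partial_1 h_0 + h_{-1}\psi = \mathrm{id}_{C_0'}$. Here $d_0 h_n = \mathrm{id}$ follows from $1_{\mathcal{H}_{par}} x_0 = x_0$, while $d_i h_n = h_{n-1} d_{i-1}$ holds because after prepending $1_{\mathcal{H}_{par}}$ the face $d_i$ acts on exactly the pair of slots on which $d_{i-1}$ acted beforehand (including the top case $i = n+1$, where $\psi$ is applied). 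Granting these, the telescoping
\[
    \partial_{n+1} h_n = d_0 h_n + \sum_{i=1}^{n+1}(-1)^i h_{n-1} d_{i-1} = \mathrm{id} - h_{n-1}\Big(\sum_{j=0}^{n}(-1)^j d_j\Big) = \mathrm{id} - h_{n-1}\partial_n
\]
gives $\partial_{n+1} h_n + h_{n-1}\partial_n = \mathrm{id}_{C_n'}$ for all $n \ge 0$, so every cycle is a boundary and the augmented complex is acyclic. Combined with the projectivity from Proposition~\ref{p_CnIsProjective}, this establishes that $C_\bullet' \xrightarrow{\psi} \mathcal{B}$ is a projective resolution in $\mathcal{H}_{par}$-mod.

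The main obstacle is essentially bookkeeping: verifying that the extra degeneracy $h_n$ descends to the tensor products over $\mathcal{B}$ (not merely over $K$), and checking the simplicial identities at the top face where $\psi$ replaces $\mu$. The reduction of the boundary identities $\psi\partial_1 = 0$ and $\psi h_{-1} = \mathrm{id}$ to Lemma~\ref{l_BactionB} is the only place where the specific embedding $\mathcal{B} \subseteq \mathcal{H}_{par}$ is used.
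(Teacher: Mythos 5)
Your proposal is correct and takes essentially the same approach as the paper: the identical extra degeneracy $x_0 \otimes_{\mathcal{B}} \cdots \otimes_{\mathcal{B}} x_n \mapsto 1_{\mathcal{H}_{par}} \otimes_{\mathcal{B}} x_0 \otimes_{\mathcal{B}} \cdots \otimes_{\mathcal{B}} x_n$ as a $K$-linear contracting homotopy, the same telescoping of the simplicial identities, the same use of Lemma~\ref{l_BactionB} to get $\psi \partial_1 = 0$, and Proposition~\ref{p_CnIsProjective} for projectivity. The only difference is cosmetic: you fold exactness in degree $0$ into the homotopy via $h_{-1}\colon \mathcal{B} \hookrightarrow \mathcal{H}_{par}$, while the paper verifies $\ker \psi = \operatorname{im} \partial_1$ directly by evaluating $\partial_1 s$ on an element of $\ker\psi$.
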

 
 \begin{proof}
     For $n \geq 0$ consider the map $s: C_n' \to C_{n+1}'$ such that
     \begin{equation}
         s(x_0 \otimes_{\mathcal{B}} \ldots \otimes_{\mathcal{B}} x_n) := 1_{\mathcal{H}_{par}} \otimes_{\mathcal{B}} x_0 \otimes_{\mathcal{B}} \ldots \otimes_{\mathcal{B}} x_n.
     \end{equation}
     Observe that such a map satisfies 
     \begin{enumerate}[(a)]
         \item $d_0 s= id_{\mathcal{C}_n'}$,
         \item $d_i s= sd_{i-1} $, for all $i \geq 1$.
     \end{enumerate}
     Thus, 
     \begin{align*}
         \partial_{n+1} s
         &= \sum_{i=0}^{n+1} (-1)^i d_i s
         = id_{C_n'} + \sum_{i=1}^{n+1} (-1)^i s d_{i-1}  \\ 
         &= id_{C_n'} - \sum_{i=0}^{n} (-1)^i s d_{i} 
         = id_{C_n'} - s \partial_{n}.
     \end{align*}
     Therefore, 
     \[
         s \partial_{n} + \partial_{n+1} s = id_{C_n'}, \, \forall n \geq 1.
     \]
     Whence $H_n(C_\bullet',\partial_\bullet)=0$ for all $n \geq 1$. Now consider map $\partial_1$. Explicitly this map takes the form
     \[
         \partial_1(x_0 \otimes_{\mathcal{B}} x_1) = x_0x_1 - x_0 \psi(x_1).
     \]
     Thus,
     \begin{align*}
         \psi \partial_0(x_0 \otimes_{\mathcal{B}} x_1) 
         &= \psi(x_0x_1) - \psi(x_0 \psi(x_1)) \\
         &= x_0x_1 \triangleright 1_{\mathcal{B}} - x_0 \psi(x_1) \triangleright 1_{\mathcal{B}} \\ 
         &= x_0 \triangleright (x_1 \triangleright 1_{\mathcal{B}}) - x_0 \triangleright (\psi(x_1) \triangleright 1_{\mathcal{B}}) \\
         (\text{by Lemma }\ref{l_BactionB})&= x_0 \triangleright \psi(x_1) - x_0 \triangleright \psi(x_1) = 0.
     \end{align*}
     Then, $C_\bullet' \overset{\psi}{\to} \mathcal{B}$ is a complex. Now, let $z$ be in the kernel of $\psi$, then
     \[
         \partial_1 s (z) 
         = \partial_1(1_{\mathcal{H}_{par}} \otimes_{\mathcal{B}} z) 
         = z - \psi(z) = z.
     \]
     Whence, $z \in \operatorname{im} \partial_1$. Thus, $\ker \psi = \operatorname{im} \partial_1$. Thus, $C_\bullet' \overset{\psi}{\to} \mathcal{B}$ is an exact sequence. Finally, by Proposition $\ref{p_CnIsProjective}$ each $C_n'$ is projective.
 \end{proof}

\section*{Acknowledgments}

The first named author was partially supported by 
Funda\c c\~ao de Amparo \`a Pesquisa do Estado de S\~ao Paulo (Fapesp), process n°:  2020/16594-0, and by  Conselho Nacional de Desenvolvimento Cient\'{\i}fico e Tecnol{\'o}gico (CNPq), process n°: 312683/2021-9. The second named author was supported by Fapesp, process n°: 2022/12963-7.

\bibliographystyle{abbrv}
 \bibliography{azu}

\begin{thebibliography}{10}

\bibitem{AlAlRePartialCohomology}
E.~R. Alvares, M.~M. Alves, and M.~J. Redondo.
\newblock Cohomology of partial smash products.
\newblock {\em J. Algebra}, 482:204--223, 2017.

\bibitem{AAB}
E.~R. Alvares, M.~M.~S. Alves, and E.~Batista.
\newblock Partial {Hopf} module categories.
\newblock {\em J. Pure Appl. Algebra}, 217:1517--1534, 2013.

\bibitem{MMAMDDHKPartialHomology}
M.~M. Alves, M.~Dokuchaev, and D.~H. Kochloukova.
\newblock Homology and cohomology via the partial group algebra.
\newblock {\em (Preprint)}, 2020.

\bibitem{AB2}
M.~M.~S. Alves and E.~Batista.
\newblock Enveloping actions for partial {Hopf} actions.
\newblock {\em Commun. Algebra}, 38(8):2872--2902, 2010.

\bibitem{AB3}
M.~M.~S. Alves and E.~Batista.
\newblock Globalization theorems for partial {Hopf} (co)actions and some of their applications.
\newblock {\em Contemp. Math.}, 537:13--30, 2011.

\bibitem{ABCQV}
M.~M.~S. Alves, E.~Batista, F.~Castro, G.~Quadros, and J.~Vercruysse.
\newblock Partial corepresentations of {Hopf} algebras.
\newblock {\em J. Algebra}, 577:74--135, 2021.

\bibitem{ALVES2015137}
M.~M.~S. Alves, E.~Batista, and J.~Vercruysse.
\newblock Partial representations of {Hopf} algebras.
\newblock {\em J. Algebra}, 426:137--187, 2015.

\bibitem{ABV2}
M.~M.~S. Alves, E.~Batista, and J.~Vercruysse.
\newblock Dilations of partial representations of {Hopf} algebras.
\newblock {\em J. Lond. Math. Soc. II. Ser. 100}, 100(1):273--300, 2019.

\bibitem{ABDP2}
M.~M.~S. Alves, M.~Batista, E.~Dokuchaev, and A.~Paques.
\newblock Globalization of twisted partial {Hopf} actions.
\newblock {\em J. Aust. Math. Soc.}, 101:1--28, 2016.

\bibitem{ABDP}
M.~M.~S. Alves, E.~E.~Batista, M.~Dokuchaev, and A.~Paques.
\newblock Twisted partial actions of {Hopf} algebras.
\newblock {\em Israel J. Math.}, 197:263--308, 2013.

\bibitem{AzBaFoFoMa}
D.~Azevedo, E.~Batista, G.~Fonseca, E.~Fontes, and G.~Martini.
\newblock Partial (co)actions of multiplier {Hopf} algebras: Morita and galois theories.
\newblock {\em J. Algebra Appl.}, 20(8):48, 2021.

\bibitem{AzMaPaSi}
D.~Azevedo, G.~Martini, A.~Paques, and L.~Silva.
\newblock {Hopf} algebras arising from partial (co)actions.
\newblock {\em J. Algebra Appl.}, 20(1):26, 2021.

\bibitem{Ba}
E.~Batista.
\newblock Partial actions: What they are and why we care.
\newblock {\em Bull. Belg. Math. Soc. - Simon Stevin}, 24(1):35--71, 2017.

\bibitem{BaHaVe}
E.~Batista, W.~Hautekiet, and J.~Vercruysse.
\newblock A comonadicity theorem for partial comodules.
\newblock {\em (arXiv)}, 2022.

\bibitem{EBAMMT}
E.~Batista, A.~D.~M. Mortari, and M.~M. Teixeira.
\newblock Cohomology for partial actions of {Hopf} algebras.
\newblock {\em J. Algebra}, 528:339--380, 2019.

\bibitem{BatiVerc1}
E.~Batista and J.~Vercruysse.
\newblock Dual constructions for partial actions of {Hopf} algebras.
\newblock {\em J. Pure Appl. Algebra}, 220(2):518--559, 2016.

\bibitem{CaenDGr}
S.~Caenepeel and E.~D. Groot.
\newblock Galois corings applied to partial galois theory.
\newblock {\em Proc. ICMA-2004, Kuwait Univ.}, pages 117--134, 2005.

\bibitem{CaenJan}
S.~Caenepeel and K.~Janssen.
\newblock Partial (co)actions of {Hopf} algebras and partial {Hopf}-galois theory.
\newblock {\em Commun. Algebra}, 36(8):2923--2946, 2008.

\bibitem{CasFrePaqQuaTam}
F.~Castro, D.~Freitas, A.~Paques, G.~Quadros, and T.~Tamusiunas.
\newblock Partial {Hopf}-galois theory.
\newblock {\em Algebra Logic}, 61(5):385--406, 2022.

\bibitem{CasPaqQuaSant}
F.~Castro, A.~Paques, G.~Cuadros, and A.~Sant'Ana.
\newblock Partial actions of weak {Hopf} algebras: Smash product, globalization and morita theory.
\newblock {\em J. Pure Appl. Algebra}, 219:5511--5538, 2015.

\bibitem{DeMeyerSeparableAlgebras}
F.~De~Meyer and E.~Ingraham.
\newblock {\em Separable Algebras over Commutative Rings}.
\newblock Lecture Notes in Mathematics. {Springer Berlin Heidelberg}, 2006.

\bibitem{D3}
M.~Dokuchaev.
\newblock Recent developments around partial actions.
\newblock {\em S\~ao Paulo J. Math. Sci.}, 13(1):195--247, 2019.

\bibitem{DE3}
M.~Dokuchaev and R.~Exel.
\newblock The ideal structure of algebraic partial crossed products.
\newblock {\em Proc. London Math. Soc.}, 115(1):91--134, 2017.

\bibitem{DEP}
M.~Dokuchaev, R.~Exel, and P.~Piccione.
\newblock Partial representations and partial group algebras.
\newblock {\em J. Algebra}, 226(1):505--532, 2000.

\bibitem{DFP}
M.~Dokuchaev, M.~Ferrero, and A.~Paques.
\newblock Partial actions and galois theory.
\newblock {\em J. Pure Appl. Algebra}, 208(1):77--87, 2007.

\bibitem{MDEJ}
M.~Dokuchaev and E.~Jerez.
\newblock The twisted partial group algebra and (co)homology of partial crossed products.
\newblock {\em (Preprint)}, 2023.

\bibitem{DKhS2}
M.~Dokuchaev, M.~Khrypchenko, and J.~J. Sim{\'o}n.
\newblock Globalization of group cohomology in the sense of {{Alvares-Alves-Redondo}}.
\newblock {\em J. Algebra}, 546:604--640, 2020.

\bibitem{E-1}
R.~Exel.
\newblock Circle actions on ${C}^*$-algebras, partial automorphisms and generalized pimsner-voiculescu exact sequences.
\newblock {\em J. Funct. Anal.}, 122:361--401, 1994.

\bibitem{E0}
R.~Exel.
\newblock Twisted partial actions: A classification of regular {{C}}*-algebraic bundles.
\newblock {\em Proc. Lond. Math. Soc.}, 74(2):417--443, 1997.

\bibitem{E1}
R.~Exel.
\newblock Partial actions of groups and actions of inverse semigroups.
\newblock {\em Proc. Am. Math. Soc.}, 126(12):3481--3494, 1998.

\bibitem{E6}
R.~Exel.
\newblock {\em Partial Dynamical Systems, Fell Bundles and Applications}.
\newblock {American Mathematical Society}, 2017.

\bibitem{HuVerc}
J.~Hu and J.~Vercruysse.
\newblock Geometrically partial actions.
\newblock {\em Trans. Amer. Math. Soc.}, 373:4085--4143, 2020.

\bibitem{jacobson2012basic}
N.~Jacobson.
\newblock {\em Basic Algebra {{II}}: {{Second}} Edition}.
\newblock Dover Books on Mathematics. {Dover Publications}, 2012.

\bibitem{Lam1998LecturesOM}
T.~Y. Lam.
\newblock Lectures on modules and rings.
\newblock In {\em Lectures on Modules and Rings}, Graduate Texts in Mathematics. {Springer New York, NY}, 1998.

\bibitem{loday2013cyclic}
J.~L. Loday.
\newblock {\em Cyclic Homology}, volume 301.
\newblock {Springer Science \& Business Media}, 2013.

\bibitem{MartiniPaquesSilva}
G.~Martini, A.~Paques, and L.~Duarte~Silva.
\newblock Partial actions of a {Hopf} algebra on its base field and the corresponding partial smash product algebra.
\newblock {\em J. Algebra Appl.}, 22(6):29, 2023.

\bibitem{Mc}
K.~McClanahan.
\newblock K-theory for partial crossed products by discrete groups.
\newblock {\em J. Funct. Anal.}, 130:77--117, 1995.

\bibitem{RotmanAnInToHoAl}
J.~Rotman.
\newblock {\em An Introduction to Homological Algebra}.
\newblock Universitext. {Springer New York}, 2008.

\bibitem{SarVer2}
P.~Saracco and J.~Vercruysse.
\newblock Globalization for geometric partial comodules.
\newblock {\em J. Algebra}, 602:37--59, 2022.

\bibitem{SarVer1}
P.~Saracco and J.~Vercruysse.
\newblock On the globalization of geometric partial (co)modules in the categories of topological spaces and algebras.
\newblock {\em Semigroup Forum}, 105(2):534--550, 2022.

\bibitem{SarVer3}
P.~Saracco and J.~Vercruysse.
\newblock Geometric partial comodules over flat coalgebras in abelian categories are globalizable.
\newblock {\em J. Pure Appl. Algebra}, 228(3):31, 2024.

\end{thebibliography}

\end{document}